\newtheorem{thm}{Theorem}[section]
\newtheorem{lem}[thm]{Lemma}
\newtheorem{prp}[thm]{Proposition}
\newtheorem{cor}[thm]{Corollary}
\theoremstyle{definition}
\newtheorem{dfn}[thm]{Definition}
\newtheorem{ntn}[thm]{Notation}
\theoremstyle{remark}
\numberwithin{equation}{section}
\newcommand{\Ad}{\operatorname{Ad}}
\newcommand{\aua}{\mathbin{\sim_{\operatorname{au}}}}
\newcommand{\clsp}{\overline{\lsp}}
\newcommand{\coker}{\operatorname{coker}}
\newcommand{\FK}{\operatorname{FK}}
\newcommand{\id}{\operatorname{id}}
\newcommand{\image}{\operatorname{Im}}
\newcommand{\lsp}{\operatorname{span}}
\newcommand{\Prim}{\operatorname{Prim}}
\newcommand{\nucdim}{\operatorname{dim_{\operatorname{nuc}}}}
\newcommand{\CC}{\mathbb{C}}
\newcommand{\NN}{\mathbb{N}}
\newcommand{\OO}{\mathbb{O}}
\newcommand{\TT}{\mathbb{T}}
\newcommand{\ZZ}{\mathbb{Z}}
\newcommand{\Bb}{\mathcal{B}}
\newcommand{\Kk}{\mathcal{K}}
\newcommand{\Mm}{\mathcal{M}}
\newcommand{\Oo}{\mathcal{O}}
\newcommand{\Tt}{\mathcal{T}}
\newcommand{\Zz}{\mathcal{Z}}
	\title{The nuclear dimension of graph $C^{*}$-algebras}
	\author{Efren Ruiz}
        \address{Department of Mathematics\\University of Hawaii,
Hilo\\200 W. Kawili St.\\
Hilo, Hawaii\\
96720-4091 USA}
        \email{ruize@hawaii.edu}
        \author{Aidan Sims}
       \address{School of Mathematics and Applied Statistics \\
Faculty of Engineering and Information Sciences \\
Room 152, Building 39C \\
University of Wollongong NSW 2522}
	\email{asims@uow.edu.au}
        \author{Mark Tomforde}
        \address{Department of Mathematics\\University of Houston\\
Houston, Texas\\
77204- 3008, USA}
        \email{tomforde@math.uh.edu}
        \date{\today}
\keywords{Graph algebras, nuclear dimension}
\subjclass[2010]{Primary: 46L35}
\thanks{This research was supported by the Australian Research Council and by the Simons
Foundation (Collaboration Grant \#279369 to Efren Ruiz and Collaboration Grant \#210035 to Mark Tomforde).
Part of the project was carried out when the authors met at the Banff International
Research Station for the workshop \emph{Graph algebras: Bridges between graph C*-algebras and Leavitt path algebras} (13w5049).}
\begin{document}

\begin{abstract}
Consider a graph $C^*$-algebra $C^*(E)$ with a purely infinite ideal $I$ (possibly all of
$C^*(E)$) such that $I$ has only finitely many ideals and $C^*(E)/I$ is approximately
finite dimensional. We prove that the nuclear dimension of $C^*(E)$ is~1. If $I$ has
infinitely many ideals, then the nuclear dimension of $C^*(E)$ is either 1~or~2.
\end{abstract}

\maketitle

\section{Introduction}

The point of view that regards $C^*$-algebras as noncommutative topological spaces has
led to a number of notions of topological dimension for $C^*$-algebras (see, for example,
\cite{BrownPedersen:JFA91, KirchbergWinter, Rieffel:PLMS83, Winter:JFA07}). Each of these
captures important $C^*$-algebraic properties, and many of them have played an important
role in the program of classification of $C^*$-algebras by $K$-theoretic data pioneered
by Elliott (see, for example, \cite{Elliott:JA76, Elliott:JRAM93,
ElliottGongEtAl:DMJ96}). In 2010, Winter and Zacharias introduced the nuclear dimension
of a $C^*$-algebra as a noncommutative analogue of topological covering dimension
\cite{WinterZacharias:AM10}. Finite nuclear dimension is closely related to
$\Zz$-stability \cite{Winter:IM12} where $\Zz$ is the Jiang-Su algebra (Toms and Winter
have conjectured that the two are equivalent for the class of simple, separable,
infinite-dimensional, nuclear $C^{*}$-algebras), and so has important implications for
classification theory.

Roughly speaking, the nuclear dimension of a $C^*$-algebra $A$ is the minimum number $d$
for which the identity map on $A$ can be approximately factored, on any finite set of
elements, through a finite-dimensional $C^*$-algebra by a composition $\id \sim
\varphi\circ \psi$ where $\psi$ is a completely positive contraction and $\varphi$
decomposes as a direct sum of $d+1$ completely positive orthogonality-preserving
contractions. As explained in \cite[Section~1]{KirchbergWinter} (see also
\cite{WinterZacharias:AM10}), this relates to covering dimension as follows\footnote{The
second author is indebted for this explanation to a talk by Aaron Tikuisis at the
workshop \emph{Classifying Structures for Operator Algebras and Dynamical Systems} held
at the University of Aberystwyth in September 2013}. Consider a compact Hausdorff space
$X$ and the commutative $C^*$-algebra $C(X)$. Given positive $f_1, \dots, f_n \in C(X)$,
cover the union of their supports with open sets $U_1, \dots, U_h$ so that each $f_i$ is
approximately constant on each $U_j$. Fix a point $x_i$ in each $U_i$ and define $\psi :
C(X) \to \CC^h$ by $\psi(f)_i = f(x_i)$ for all $1 \leq i \leq h$. Define $\varphi :
\CC^h \to C(X)$ by $\varphi(a_1, \dots a_h) = \sum_{i=1}^h a_i u_i$ for some partition of
unity $\{ u_i \}_{i=1}^h$ subordinate to $\mathcal{U} = \{U_1, \dots, U_h\}$. So each
$\varphi \circ \psi(f_i)$ is close to $f_i$ by construction. Partitioning $\mathcal{U}$
into subcollections $\mathcal{U} = \mathcal{U}^0 \sqcup \ldots \sqcup \mathcal{U}^d$ so
that distinct elements of any given $\mathcal{U}^i$ are disjoint gives a decomposition of
$\CC^h$ into $d+1$ direct summands on which $\varphi$ preserves orthogonality. The
smallest $d$ for which we can always do this (to a refinement of $\mathcal{U}$) is the
covering dimension of $X$.

In \cite{WinterZacharias:AM10}, Winter and Zacharias established a number of fundamental
properties of nuclear dimension. They showed that nuclear dimension behaves well with
respect to stabilization, direct sums, tensor products, hereditary subalgebras, direct
limits, and extensions. They also showed that for $2 \leq n < \infty$ the Cuntz algebras
$\Oo_n$ have nuclear dimension 1. To do this, they employed elements of a construction
used in \cite{Kribs} to realise $C^*$-algebras generated by weighted shift operators as
direct limits of Toeplitz-Cuntz algebras. They constucted pairs $\varphi_m$, $\psi_m$
such that each $\psi_m : \Oo_n \to F$ is a completely positive contraction onto a
finite-dimensional $C^*$-algebra, each $\varphi_m : F \to \Oo_n \otimes M_{d_m}$ is a
direct sum of two orthogonality-preserving completely positive contractions, and
$\varphi_m \circ \psi_m \to \kappa_m$ pointwise as $m \to \infty$, where $\kappa_m :
\Oo_n \to \Oo_n \otimes M_{d_m}$ is a homomorphism that induces multiplication by $m$ in
$K$-theory. Since $K_*(\Oo_n) = (\ZZ_{n-1}, 0)$, choosing the $m$ appropriately ensures
that the $\kappa_m$ induce the identity in $K$-theory, and then Kirchberg-Phillips'
classification results show that the $\varphi_m \circ \psi_m$ are asymptotically
approximately conjugate to the identity map. Winter and Zacharias' results about
extensions then show that the Toeplitz-Cuntz algebras $\Tt \Oo_n$ have nuclear dimension
at most~2, and hence $\Oo_\infty$, being a direct limit of the $\Tt \Oo_n$, also has
nuclear dimension at most 2. Finally, using direct-limit decompositions of Kirchberg
algebras and the behavior of nuclear dimension with respect to tensor products, Winter
and Zacharias deduce that every Kirchberg algebra has nuclear dimension at most~5.

More recently, Enders \cite{Enders:xx} developed a technique for showing that
$\Oo_\infty$ in fact has nuclear dimension 1. The rough idea of his argument is to
proceed as in Winter and Zacharias' proof for $\Oo_n$ up to the construction of the
$\varphi_m \circ \psi_m$ that approximate homomorphisms $\kappa_m$ inducing
multiplication by $m$ in $K$-theory. At this point, Enders uses that multiplication by
$-1$ in $K_*(\Oo_\infty)$ is an isomorphism, and so, by Kirchberg-Phillips' results, is
induced by an automorphism $\lambda$ of $\Oo_\infty$. He then constructs approximating
pairs $\Phi_m, \Psi_m$ from the $\varphi_{m+1} \circ \psi_{m+1}$ and $\lambda \circ
\varphi_m \circ \psi_m$ such that the $\Phi_m \circ \Psi_m$ approximate homomorphisms
that induce the identity map in $K$-theory. He then argues using Kirchberg-Phillips'
results again that the $\Phi_m \circ \Psi_m$ are approximately conjugate to the identity.
Enders actually shows that all Kirchberg algebras in the Rosenberg-Schochet bootstrap
category $\mathcal{N}$ with torsion-free $K_1$ have nuclear dimension~1.  Building upon
these results, and the techniques developed in this paper, the first two authors and
S{\o}rensen \cite{RSS} have showed that all UCT Kirchberg algebras have nuclear
dimension~1. Using more direct techniques, Matui and Sato \cite{MatuiSato} have shown
that all Kirchberg algebras (regardless of UCT) have nuclear dimension at most~3; and
Bosa, Brown, Sato, Tikuisis, White and Winter have recently announced that in fact the
exact value is always~1.

Here we address the nuclear dimension of nonsimple graph $C^*$-algebras; our cleanest
results are for the purely infinite situation. Winter and Zacharias' results imply that
if $C^*(E)$ has at most $m$ primitive ideals, then the nuclear dimension of $C^*(E)$ is at most
$6m-1$, and Enders' result improves this bound to $3m-1$. We prove that in fact if $E$ is
a directed graph and $C^*(E)$ is purely infinite and has finitely many ideals, then
$\nucdim(C^*(E)) = 1$. In particular, every Cuntz-Krieger algebra with finitely many
ideals has nuclear dimension~1. A key tool for us is a construction due to Kribs and
Solel \cite{KribsSolel:JAMS07} that generalises Kribs' construction of direct limits of
Toeplitz Cuntz algebras \cite{Kribs} to a construction of direct limits of
Toeplitz-Cuntz-Krieger algebras associated to directed graphs. Using an adaptation of a
direct-limit decomposition of graph $C^*$-algebras due to Jeong and Park
\cite{JeongPark:JFA02}, we deduce that every purely infinite graph $C^*$-algebra has
nuclear dimension at most 2. We then consider graph
$C^*$-algebras of ``mixed" type. We show that if $I$ is an ideal of $C^*(E)$ for which
the quotient is AF, then the extension is quasidiagonal in the sense that $I$ contains an
approximate identity of projections that is asymptotically central in $C^*(E)$. Using
this we deduce that the nuclear dimension of $C^*(E)$ is at most that of $I$. So if $I$
is purely infinite, then the nuclear dimension of $C^*(E)$ is at most~2, and if $I$ is
purely infinite and has finitely many ideals, then the nuclear dimension of $C^*(E)$ is~1
(see \cite{BEMSW} for an alternative, and more direct, approach to finding upper bounds
for the nuclear dimension of a non-simple $\Oo_\infty$-absorbing $C^*$-algebras).

Let $A$ be a separable, nuclear, purely infinite, tight $C^{*}$-algebra over an accordion
space $X$ (meaning that there is a homeomorphism $\psi : \Prim(A) \to X$). Suppose that
$K_{1}(A(x))$ is free and that $A(x)$ belongs to the Rosenberg-Schochet bootstrap
category $\mathcal{N}$ for each $x \in X$. Recent results of Arklint, Bentmann, and
Katsura (\cite{ArklintBentmannKatsura:xx13inv} and
\cite{ArklintBentmannKatsura:xx13range}), show that $A$ is stably isomorphic to $C^*(E)$
for some row-finite graph $E$.  Since nuclear dimension is preserved by stabilization,
these results imply that $A$ has nuclear dimension~1.  Evidence suggests that, more
generally, if $A$ is a separable, nuclear, purely infinite, tight $C^{*}$-algebra over
any finite topological space $X$ and if $K_{1}(A(x))$ is free and $A(x)$ is in
$\mathcal{N}$ for all $x \in X$, then $A$ is a purely infinite graph $C^{*}$-algebra.  If
so, then our results would imply that if $A$ is a separable, nuclear, purely infinite
$C^{*}$-algebra with finitely many ideals such that every simple subquotient of $A$
belongs to $\mathcal{N}$ and has a free abelian $K_1$-group, then $A$ has nuclear
dimension~1.

\medskip

The paper is structured as follows. In Section~\ref{sec:approximation}, given a
row-finite directed graph $E$ with no sinks, and the corresponding sequence of graphs
$E(m)$ of \cite{KribsSolel:JAMS07}, we describe homomorphisms $\tilde{\iota}_m : C^*(E)
\to C^*(E(m))$ constructed by Rout, and adapt the approach of
\cite[Section~7]{WinterZacharias:AM10} to show that the homomorphisms $\tilde{\iota}_m :
C^*(E) \to C^*(E(m))$ approximately factor through direct sums of two order-zero maps
through finite-dimensional $C^*$-algebras. In Section~\ref{sec:reinclusion}, again
following \cite[Section~7]{WinterZacharias:AM10}, we construct homomorphisms $j_m :
C^*(E(m)) \to C^*(E) \otimes \Kk$ and prove that if $C^*(E)$ has finitely many ideals,
then the homomorphisms $j_m \circ \tilde{\iota}_m : C^*(E) \to C^*(E) \otimes \Kk$ induce
multiplication by $m$ in the $K$-groups of every ideal and quotient of $C^*(E)$. In
Section~\ref{sec:enders}, we combine this with heavy machinery of \cite{Kirchberg,
MeyerNest:MJM09, MeyerNest:CJM12} and a technique developed by Enders \cite{Enders:xx} to
prove that $C^*(E)$ has nuclear dimension~1 when it is purely infinite and has finitely
many ideals; we deduce that strongly purely infinite nuclear UCT $C^*$-algebras whose
primitive ideal spaces are finite accordion spaces have nuclear dimension~1 using the
classification results of \cite{ArklintBentmannKatsura:xx13range,
ArklintBentmannKatsura:xx13inv}. In a short section~\ref{sec:consequences}, we use our
main result and a technique of Jeong-Park \cite{JeongPark:JFA02} to see that purely
infinite graph $C^*$-algebras with infinitely many ideals have nuclear dimension at
most~2. In Section~\ref{sec:qdext}, we show that the nuclear dimension of a quasidiagonal
extension $0 \to I \to A \to A/I \to 0$ (see Definition~\ref{dfn:qde}) is the maximum of
those of $I$ and $A/I$, and use this and a result of Gabe to investigate the nuclear
dimension of $C^*(E)$ when it admits a purely infinite ideal $I$ such that $C^*(E)/I$ is
AF.

\section{Approximation by order-zero maps}\label{sec:approximation}

In this section we construct homomorphisms from a graph algebra $C^*(E)$ into related
graph algebras $C^*(E(m))$ that can be approximately factored through finite-dimensional
$C^*$-algebras as sums of two order-zero maps. Our approach closely follows the technique
developed by Winter and Zacharias in \cite[Section~7]{WinterZacharias:AM10} to compute
the nuclear dimension of Cuntz algebras.

For a directed graph $E$, let $E^{*}$ denote the set of all finite paths in $E$, let
$E^{n}$ denote the set of all paths in $E$ of length $n$, and let $E^{<n}$ denote the set
of all paths in $E$ of length strictly less than $n$.  We regard vertices as paths of
length zero.  For $\mu \in E^*$, we define
\begin{align*}
E^{n} \mu := \{ \alpha \mu : \alpha \in E^{n} \text{ and }  r( \alpha ) = s ( \mu ) \} \\
\mu E^{n} := \{ \mu \alpha : \alpha \in E^{n} \text{ and } s( \alpha ) = r ( \mu ) \},
\end{align*}
and we define $\mu E^{*}$, $E^{*}\mu$, $\mu E^{<n}$, and $E^{<n} \mu$ similarly.

Recall that if $E$ is a row-finite directed graph, then its Toeplitz algebra $\Tt C^*(E)$
is the universal $C^*$-algebra generated by mutually orthogonal projections $\{q_v : v
\in E^0\}$ and elements $\{t_e : e \in E^1\}$ such that
\begin{itemize}
\item[(TCK1)] $t^*_e t_e = q_{r(e)}$ for all $e \in E^1$, and
\item[(TCK2)] $q_v \ge \sum_{e \in vE^1} t_e t^*_e$ for each $v \in E^0$.
\end{itemize}
There is a Toeplitz-Cuntz-Krieger $E$-family in $\ell^2(E^*)$ given by $T_e \xi_\mu =
\delta_{r(e), s(\mu)} \xi_{e\mu}$ and $Q_v \xi_\mu = \delta_{v, s(\mu)} \xi_\mu$. Since
$\big(Q_v - \sum_{e \in vE^1} T_e T^*_e\big) \xi_v = \xi_v$, we see that the inequality
in (TCK2) is strict in $\Tt C^*(E)$.

We recall some background from \cite{WinterZacharias:AM10}. A completely positive map
$\varphi : A \to B$ between $C^*$-algebras has \emph{order zero} if, whenever $a,b$ are
positive elements of $A$ with $ab = ba = 0$, we also have $\varphi(a)\varphi(b) =
\varphi(b)\varphi(a) = 0$. A $C^*$-algebra $A$ has nuclear dimension at most $n \ge 0$ if
there is a net $(F_\lambda, \psi_\lambda, \varphi_\lambda)$ such that the $F_\lambda$ are
finite-dimensional $C^*$-algebras, and $\psi_\lambda : A \to F_\lambda$ and
$\varphi_\lambda : F_\lambda \to A$ are completely positive maps satisfying
\begin{enumerate}
\item\label{it:nucdim1} $\varphi_\lambda \circ \psi_\lambda(a) \to a$ for each $a \in A$;
\item\label{it:nucdim2} $\|\psi_\lambda\| \le 1$ for all $\lambda$; and
\item\label{it:nucdim3} each $F_\lambda$ decomposes as $F_\lambda = \bigoplus^n_{i=0}
    F_\lambda^{(i)}$ with each $\varphi_\lambda|_{F_\lambda^{(i)}}$ a completely
    positive contraction of order zero.
\end{enumerate}

Given a countable set $S$, the compact operators on $\ell^{2} ( S )$ will be denoted by
$\Kk_{S}$. Equivalently, $\Kk_S$ is the unique nonzero $C^*$-algebra generated by matrix
units indexed by $S$; that is, elements $\{\theta_{s,t} : s,t \in S\}$ such that
$\theta_{s,t}^* = \theta_{t,s}$ and $\theta_{s,t} \theta_{u,v} = \delta_{t,u}
\theta_{s,v}$.

\begin{lem}\label{lem:compacts}
Let $E$ be a row-finite directed graph. For $\mu \in E^*$, let $\Delta_\mu := t_\mu
t^*_\mu - \sum_{\mu e \in \mu E^1} t_{\mu e} t^*_{\mu e} \in \Tt C^*(E)$. Then
$\Delta_\mu = t_\mu \Delta_{r(\mu)} t^*_\mu$ for each $\mu \in E^*$, and there is an
isomorphism
 from $\clsp\{t_\mu \Delta_v t^*_\nu : v\in E^0\text{ and }\mu,\nu \in E^* v\}$ onto
$\bigoplus_{v \in E^0} \Kk_{E^* v}$ that carries each $t_\mu \Delta_v t^*_\nu$ to
$\theta_{\mu,\nu}$. For each $m \in \NN$, the series $\sum_{\mu \in E^{<m}} \Delta_\mu$
converges strictly to a projection $\Phi_m \in \Mm(\Tt C^*(E) )$. Moreover,
\[
\Phi_m t_\alpha t^*_\beta \Phi_m
	= \sum_{\substack{\tau \in r(\alpha)E^*,\\ |\alpha\tau|,|\beta\tau| < m}}
		t_{\alpha\tau}\Delta_{r(\tau)}t^*_{\beta\tau},
\]
and
\[
\Phi_m \Tt C^*(E) \Phi_m = \clsp\{t_\mu \Delta_{r(\mu)} t^*_\nu : |\mu|, |\nu| < m\}.
\]
\end{lem}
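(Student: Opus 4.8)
The plan is to establish the five assertions of the lemma in order, leaning throughout on two elementary consequences of (TCK1)--(TCK2). First, for $e \neq f$ in $E^{1}$ with $s(e) = s(f)$ one has $t_{e}^{*} t_{f} = 0$: (TCK2) forces the projections $t_{g} t_{g}^{*}$ ($g \in s(e)E^{1}$) to be mutually orthogonal (if $p,q$ are projections with $p + q \leq 1$ then $pq = 0$), so $t_{e}^{*} t_{f} = (t_{e}^{*} t_{e} t_{e}^{*})(t_{f} t_{f}^{*} t_{f}) = 0$. Second, $\Delta_{v} = q_{v} - \sum_{e \in vE^{1}} t_{e} t_{e}^{*}$ is a projection with $\Delta_{v} \leq q_{v}$ and $\Delta_{v} t_{e} = 0$ for every $e \in vE^{1}$ (row-finiteness makes this a finite sum, so $\Delta_{v} \in \Tt C^{*}(E)$); iterating, $\Delta_{v} t_{\tau} = 0$ for every \emph{nonempty} path $\tau$ with $s(\tau) = v$, and this last identity does essentially all the work below. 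It is also convenient to record that in the Fock representation on $\ell^{2}(E^{*})$ the operator $\Delta_{v}$ is the rank-one projection onto $\CC \xi_{v}$, so that $t_{\mu}\Delta_{v} t_{\nu}^{*}$ acts as the rank-one operator $\xi_{\nu} \mapsto \xi_{\mu}$; in particular $t_{\mu}\Delta_{v} t_{\nu}^{*} \neq 0$ for all $\mu,\nu \in E^{*}v$. The identity $\Delta_{\mu} = t_{\mu}\Delta_{r(\mu)} t_{\mu}^{*}$ is then immediate: since $t_{\mu}^{*} t_{\mu} = q_{r(\mu)}$ and $t_{\mu} t_{e} = t_{\mu e}$, expanding $t_{\mu}\Delta_{r(\mu)} t_{\mu}^{*}$ gives $t_{\mu} t_{\mu}^{*} - \sum_{e \in r(\mu)E^{1}} t_{\mu e} t_{\mu e}^{*} = \Delta_{\mu}$.

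For the matrix-unit structure I would compute $t_{\nu}^{*} t_{\sigma}$ for paths $\nu,\sigma$: it is $0$ unless one of $\nu,\sigma$ is a prefix of the other, it is $q_{r(\nu)}$ when $\nu = \sigma$, and otherwise it is $t_{\tau}$ or $t_{\tau}^{*}$ for a nonempty path $\tau$ with source $r(\nu)$ or $r(\sigma)$. Sandwiching between $\Delta$'s and using $\Delta_{v} t_{\tau} = 0$ for nonempty $\tau$ then gives at once $\Delta_{\mu}\Delta_{\nu} = 0$ for $\mu \neq \nu$, the relations $(t_{\mu}\Delta_{v} t_{\nu}^{*})(t_{\sigma}\Delta_{v} t_{\tau}^{*}) = \delta_{\nu,\sigma}\, t_{\mu}\Delta_{v} t_{\tau}^{*}$ and $(t_{\mu}\Delta_{v} t_{\nu}^{*})^{*} = t_{\nu}\Delta_{v} t_{\mu}^{*}$ for $\mu,\nu,\sigma,\tau \in E^{*}v$, and $(t_{\mu}\Delta_{v} t_{\nu}^{*})(t_{\sigma}\Delta_{w} t_{\tau}^{*}) = 0$ when $v \neq w$. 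Thus $\{t_{\mu}\Delta_{v} t_{\nu}^{*} : v \in E^{0},\ \mu,\nu \in E^{*}v\}$ is a system of nonzero matrix units, block-diagonal over $v \in E^{0}$ with the $v$-block indexed by $E^{*}v \times E^{*}v$; the uniqueness of $\Kk_{S}$, applied blockwise and then assembled, gives the asserted isomorphism onto $\bigoplus_{v \in E^{0}}\Kk_{E^{*}v}$.

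Now fix $m \in \NN$ and put $P_{F} := \sum_{\mu \in F}\Delta_{\mu}$ for finite $F \subseteq E^{<m}$; by mutual orthogonality each $P_{F}$ is a projection in $\Tt C^{*}(E)$ of norm at most $1$, and the $P_{F}$ increase with $F$. To obtain strict convergence it suffices to test against the spanning elements $a = t_{\alpha} t_{\beta}^{*}$. A short computation shows $\Delta_{\mu} t_{\alpha} = 0$ unless $\mu$ has $\alpha$ as a prefix, and that $\Delta_{\alpha\tau} t_{\alpha} = t_{\alpha\tau}\Delta_{r(\tau)} t_{\tau}^{*}$ for $\tau \in r(\alpha)E^{*}$; since $\{\tau \in r(\alpha)E^{*} : |\alpha\tau| < m\}$ is finite by row-finiteness, $\Delta_{\mu} a$ and $a\Delta_{\mu}$ vanish for all but finitely many $\mu \in E^{<m}$, so $P_{F} a$ and $a P_{F}$ are eventually constant in $F$. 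Hence $P_{F} \to \Phi_{m}$ strictly for some $\Phi_{m} \in \Mm(\Tt C^{*}(E))$, and $\Phi_{m}^{2} = \Phi_{m} = \Phi_{m}^{*}$ by continuity of the involution and joint strict continuity of multiplication on bounded sets. Note also that $\Delta_{\mu}\Phi_{m} = \Delta_{\mu} = \Phi_{m}\Delta_{\mu}$ whenever $|\mu| < m$, since then $\Delta_{\mu}$ is one of the orthogonal summands of $\Phi_{m}$.

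Finally, multiplying the strict limit on the right by $t_{\alpha}$ and on the left by $t_{\beta}^{*}$ (and noting $t_{\alpha} t_{\beta}^{*} = 0$ unless $r(\alpha) = r(\beta)$, so all the paths below share a source) yields
\begin{align*}
\Phi_{m} t_{\alpha} &= \sum_{\tau \in r(\alpha)E^{*},\, |\alpha\tau| < m} t_{\alpha\tau}\Delta_{r(\tau)} t_{\tau}^{*}, \\
t_{\beta}^{*} \Phi_{m} &= \sum_{\rho \in r(\beta)E^{*},\, |\beta\rho| < m} t_{\rho}\Delta_{r(\rho)} t_{\beta\rho}^{*}.
\end{align*}
Multiplying these, a $(\tau,\rho)$-term carries a factor $\Delta_{r(\tau)}(t_{\tau}^{*} t_{\rho})\Delta_{r(\rho)}$, which is $\Delta_{r(\tau)}$ when $\tau = \rho$ and $0$ otherwise (a prefix relation between distinct $\tau,\rho$ produces a nonempty path killed by one of the $\Delta$'s), and this gives the displayed corner formula, the two-sided constraint $|\alpha\tau|,|\beta\tau| < m$ coming from the two sums above. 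For the last equality, ``$\subseteq$'' follows from this formula with linearity, density and continuity of $a \mapsto \Phi_{m} a\Phi_{m}$ (each summand being $t_{\mu}\Delta_{r(\mu)} t_{\nu}^{*}$ with $\mu = \alpha\tau$, $\nu = \beta\tau$, $|\mu|,|\nu| < m$), and ``$\supseteq$'' follows since $\Delta_{\mu} t_{\mu}\Delta_{r(\mu)} = t_{\mu}\Delta_{r(\mu)}$ and $t_{\nu}^{*}\Delta_{\nu} = \Delta_{r(\nu)} t_{\nu}^{*}$ give $t_{\mu}\Delta_{r(\mu)} t_{\nu}^{*} = \Delta_{\mu}(t_{\mu}\Delta_{r(\mu)} t_{\nu}^{*})\Delta_{\nu} = \Phi_{m}(t_{\mu}\Delta_{r(\mu)} t_{\nu}^{*})\Phi_{m}$ when $|\mu|,|\nu| < m$. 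The main obstacle is bookkeeping: pinning down exactly when $t_{\mu}^{*} t_{\nu}$ is nonzero, tracking which length constraints survive in the product $\Phi_{m} t_{\alpha}\cdot t_{\beta}^{*}\Phi_{m}$, and treating the strict convergence cleanly when $E$ has infinitely many vertices or edges.
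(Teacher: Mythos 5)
Your proof is correct and follows essentially the same route as the paper's: the identity $\Delta_v t_\tau = 0$ for nonempty $\tau \in vE^*$ drives both the matrix-unit verification and the computation of $\Delta_\mu t_\alpha$, and strict convergence comes from row-finiteness together with boundedness of the partial sums. The only cosmetic differences are that you test strict convergence against the spanning monomials $t_\alpha t_\beta^*$ rather than against the vertex projections $q_v$, and you spell out the reverse inclusion in the final equality, which the paper leaves implicit.
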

\begin{proof}
We have $t_\mu \Delta_{r(\mu)} t^*_\mu = t_\mu q_{r(\mu)} t_\mu^{*} - t_\mu \sum_{e \in
r(\mu)E^1} t_e t^*_e t_\mu^{*} = \Delta_\mu$, proving the first assertion. We also have
\[
\|t_\mu \Delta_v t_\nu^{*} \|
	\ge \|t_\mu^* t_\mu \Delta_v t_\nu^* t_\nu\| = \|\Delta_v\| = 1
\]
as discussed immediately after the definition of a Toeplitz-Cuntz-Krieger family above.
Hence to establish the desired isomorphism from $\clsp\{t_\mu \Delta_v t^*_\nu : v\in
E^0\text{ and }\mu,\nu \in E^* v\}$ onto $\bigoplus_{v \in E^0} \Kk_{E^* v}$,  it
suffices to show that the $t_\mu \Delta_v t^*_\nu$ are matrix units. Certainly $(t_\mu
\Delta_v t^*_\nu)^* = t_\nu \Delta_v t^*_\mu$ because $\Delta_v$ is a projection.  If
$\mu,\nu \in E^*v$ and $\alpha, \beta \in E^*w$, then
\[
t_\mu \Delta_v t^*_\nu t_\alpha \Delta_w t^*_\beta
	= \begin{cases}
		t_\mu \Delta_v t_{\alpha'} \Delta_w t^*_\beta &\text{ if $\alpha = \nu\alpha'$}\\
		t_\mu \Delta_v t^*_{\nu'} \Delta_w t^*_\beta &\text{ if $\nu = \alpha\nu'$}\\
		0 &\text{ otherwise.}		
	\end{cases}
\]
If $\alpha = \nu\alpha'$ and $|\alpha'| > 0$, then $\alpha' = e\alpha''$ for some $e \in
vE^1$, and since $\Delta_v \le q_v - t_e t^*_e$, we have $\Delta_v t_{\alpha'} = 0$.
Symmetrically, $t^*_{\nu'} \Delta_w = 0$ if $\nu = \alpha\nu'$ and $|\nu'| > 0$. So if
$t_\mu \Delta_v t^*_\nu t_\alpha \Delta_w t^*_\beta \not= 0$, then $\nu = \alpha$, and
then $v = w$, and we obtain $t_\mu \Delta_v t^*_\nu t_\alpha \Delta_w t^*_\beta = t_\mu
\Delta_v q_v \Delta_v t^*_\beta$. Hence $t_\mu \Delta_v t^*_\nu t_\alpha \Delta_w
t^*_\beta = \delta_{\nu,\alpha} t_\mu \Delta_v t^*_\beta$ as required.

Since $E$ is row-finite, for any $v \in E^0$ we have $q_v \Delta_\mu = 0$ for all but
finitely many $\mu \in E^{<m}$. Since $\sum_v q_v$ converges strictly to the identity in
$\Mm( \Tt C^*(E) )$ and since $\{ \sum_{ \mu \in F } \Delta_{\mu} : \text{$F$ is a finite subset
of $E^{<m}$} \}$ is bounded, the series $\sum_{\mu \in E^{<m}} \Delta_\mu$ converges
strictly to a multiplier $\Phi_m$. This $\Phi_m$ is a projection because the preceding
paragraph shows that the $\Delta_\mu$ are mutually orthogonal projections. For $\mu \in
E^{<m}$ and $\alpha \in E^*$, we have
\[
\Delta_\mu t_\alpha
	= t_\mu \Delta_{r(\mu)} t^*_\mu t_\alpha
	= \begin{cases}
		t_\mu \Delta_{r(\mu)} t_{\alpha'} &\text{ if $\alpha = \mu\alpha'$}\\
		t_\mu \Delta_{r(\mu)} t^*_{\mu'} &\text{ if $\mu = \alpha\mu'$}\\
		0 &\text{ otherwise.}		
	\end{cases}
\]
Arguing as in the preceding paragraph, we see that the right-hand side is zero if $\alpha
= \mu\alpha'$ and $|\alpha'| > 0$. So
\[
\Delta_\mu t_\alpha
	= \begin{cases}
		t_\mu \Delta_{r(\mu)} t^*_{\mu'} &\text{ if $\mu = \alpha\mu'$}\\
		0 &\text{ otherwise.}		
	\end{cases}
\]
The formula given for $\Phi_m t_\alpha t^*_\beta \Phi_m$ now follows from the preceding
paragraph. Since $\Tt C^*(E) = \clsp\{t_\alpha t^*_\beta : \alpha,\beta \in E^*\}$, this
establishes the final assertion of the lemma as well.
\end{proof}

\begin{ntn}
Following \cite{WinterZacharias:AM10}, for each $m \in \NN$, we define $\kappa_m \in
M_m([0,\infty))$ to be the matrix such that for $i,j \le \lceil\frac{m}{2}\rceil$,
\begin{align*}
\kappa_m(i,j)
	= \kappa_m(m + 1 - i, j)
	&= \kappa_m(i, m + 1 - j)\\
	&= \kappa_m(m + 1 - i, m + 1 - j)
		= \frac{1}{\lceil\frac{m}{2}\rceil + 1}\min\{i,j\};
\end{align*}
so for $l \in \NN$,
\[
\kappa_{2l} = \frac{1}{l+1}
    \left(
    \begin{matrix}
        1 & 1 & \dots & 1 & 1 & 1 & 1 &\dots & 1 & 1 \\
        1 & 2 & \dots & 2 & 2 & 2 & 2 &\dots & 2 & 1 \\
        \vdots & \vdots & \ddots & \vdots &\vdots &\vdots &\vdots & \iddots & \vdots &\vdots  \\
        1 & 2 & \dots & l-1 & l-1 & l-1 & l-1 & \dots & 2 & 1 \\
        1 & 2 & \dots & l-1 & l & l & l-1 & \dots & 2 & 1 \\
        1 & 2 & \dots & l-1 & l & l & l-1 & \dots & 2 & 1 \\
        1 & 2 & \dots & l-1 & l-1 & l-1 & l-1 & \dots & 2 & 1 \\
        \vdots & \vdots & \iddots &\vdots &\vdots & \vdots &\vdots & \ddots &\vdots &\vdots  \\
        1 & 2 & \dots & 2 & 2 & 2 & 2 &\dots & 2 & 1 \\
        1 & 1 & \dots & 1 & 1 & 1 & 1 &\dots & 1 & 1 \\
    \end{matrix}
    \right),
\]
and
\[
\kappa_{2l + 1} = \frac{1}{l+2}
    \left(
    \begin{matrix}
        1 & 1 & \dots & 1 & 1 &  1 &\dots & 1 & 1 \\
        1 & 2 & \dots & 2 & 2 &  2 &\dots & 2 & 1 \\
        \vdots & \vdots & \ddots &\vdots &\vdots &\vdots & \iddots & \vdots &\vdots  \\
        1 & 2 & \dots & l & l & l & \dots & 2 & 1 \\
        1 & 2 & \dots & l & l+1 & l & \dots & 2 & 1 \\
        1 & 2 & \dots & l & l & l & \dots & 2 & 1 \\
        \vdots & \vdots & \iddots &\vdots &\vdots &\vdots & \ddots &\vdots &\vdots  \\
        1 & 2 & \dots & 2 & 2 & 2 &\dots & 2 & 1 \\
        1 & 1 & \dots & 1 & 1 & 1 &\dots & 1 & 1 \\
    \end{matrix}
    \right).
\]
As a notational convenience, we write $\kappa_{m}(i,j) = 0$ whenever $(i,j) \not\in \{1,
\dots, m\} \times \{1, \dots, m\}$.
\end{ntn}

Fix $m \in \NN$. Recall that Schur multiplication in $M_n(\CC)$ is entrywise
multiplication of matrices. For $l \le \lceil\frac{m}{2}\rceil$, let $P_{m,l} =
\sum^{m-l+1}_{i=l} \theta_{i,i}$ denote the projection onto $\lsp\{e_l, \dots,
e_{m-l+1}\}$. Then Schur multiplication by $\kappa_m$ is given by $\kappa_m * A =
\sum^{\lceil\frac{m}{2}\rceil}_{l=1} \frac{1}{\lceil\frac{m}{2}\rceil + 1} P_{m,l} A
P_{m,l}$. Hence Schur multiplication by $\kappa_m$ is completely positive with completely
bounded norm at most $\lceil\frac{m}{2}\rceil/(\lceil\frac{m}{2}\rceil + 1) < 1$.

\begin{lem}\label{lem:PmQm}
For each $m \in \NN$, there are completely positive contractions $P_m, Q_m : \Tt C^*(E)
\to \clsp\{t_\mu \Delta_{r(\mu)} t_\nu^* : \mu, \nu \in E^*\}$ such that, putting $l =
\lceil\frac{m}{2}\rceil$, we have
\begin{align*}
P_m(t_\mu t^*_\nu)
	&= \sum_{\substack{\tau \in r(\mu)E^* \\
			  m \le |\mu\tau|,|\nu\tau| < 2m}}
	    \kappa_m(|\mu\tau|-m, |\nu\tau|-m) t_{\mu\tau}\Delta_{r(\tau)} t^*_{\nu\tau}
    \quad\text{ and }\\
Q_m(t_\mu t^*_\nu)
	&= \sum_{\substack{\tau \in r(\mu)E^* \\
			  m+l \le |\mu\tau|,|\nu\tau| < 2m+l}}
		\kappa_m(|\mu\tau|-(m+l), |\nu\tau|-(m+l)) t_{\mu\tau}\Delta_{r(\tau)} t^*_{\nu\tau}.
\end{align*}
\end{lem}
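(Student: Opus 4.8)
The plan is to realise $P_m$ and $Q_m$ as subconvex combinations of compressions by projections in $\Mm(\Tt C^*(E))$, in direct imitation of the decomposition
$\kappa_m * A = \sum_{k=1}^{\lceil m/2\rceil}\frac{1}{\lceil m/2\rceil+1}P_{m,k}AP_{m,k}$
of Schur multiplication by $\kappa_m$ recorded above. There the projections $P_{m,k}$ cut down to a band $\{k,\dots,m-k+1\}$ of coordinates; here they will be replaced by projections onto the closed spans of the $t_\mu\Delta_{r(\mu)}t^*_\nu$ with $|\mu|,|\nu|$ ranging over a band of path lengths.

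First I would record a compression identity. Running the computations in the proof of Lemma~\ref{lem:compacts} (in particular $\Phi_a\Phi_b = \Phi_{\min\{a,b\}}$, and $\Delta_\rho t_\alpha = t_\rho\Delta_{r(\rho)}t^*_{\rho'}$ when $\rho = \alpha\rho'$ and $0$ otherwise) one obtains, for all $a,b\in\NN$,
\[
\Phi_a\, t_\alpha t^*_\beta\, \Phi_b = \sum_{\substack{\tau\in r(\alpha)E^*\\ |\alpha\tau|<a,\ |\beta\tau|<b}} t_{\alpha\tau}\Delta_{r(\tau)}t^*_{\beta\tau},
\]
and hence, whenever $b\le a$ (so that $\Phi_b\le\Phi_a$ and $\Phi_a-\Phi_b$ is a projection),
\[
(\Phi_a-\Phi_b)\, t_\alpha t^*_\beta\, (\Phi_a-\Phi_b) = \sum_{\substack{\tau\in r(\alpha)E^*\\ b\le|\alpha\tau|,\,|\beta\tau|<a}} t_{\alpha\tau}\Delta_{r(\tau)}t^*_{\beta\tau}.
\]

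Next, with $l := \lceil m/2\rceil$, for $1\le k\le l$ I would put $c_k := \min\{2m,\,2m-k+2\}$ and $\Sigma_k := \Phi_{c_k}-\Phi_{m+k}$; since $k\le l\le\tfrac{m}{2}+1$ we have $m+k\le c_k$, so each $\Sigma_k$ is a projection in $\Mm(\Tt C^*(E))$, and I would set $P_m := \sum_{k=1}^{l}\frac{1}{l+1}\,\Sigma_k(\cdot)\,\Sigma_k$. Because $\Sigma_k$ is a projection, $x\mapsto\Sigma_k x\Sigma_k$ is a completely positive contraction of $\Tt C^*(E)$ (an ideal in $\Mm(\Tt C^*(E))$) into itself which, by the second display, carries the spanning elements $t_\alpha t^*_\beta$ into $\lsp\{t_\mu\Delta_{r(\mu)}t^*_\nu\}$; hence $P_m$ maps into $\clsp\{t_\mu\Delta_{r(\mu)}t^*_\nu : \mu,\nu\in E^*\}$, and since $\sum_{k=1}^l\frac{1}{l+1}<1$ it is a completely positive contraction. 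Applying the second display to $t_\mu t^*_\nu$ gives $P_m(t_\mu t^*_\nu) = \sum_{\tau\in r(\mu)E^*} c_\tau\, t_{\mu\tau}\Delta_{r(\tau)}t^*_{\nu\tau}$ with $c_\tau = \frac{1}{l+1}\#\{k : 1\le k\le l,\ m+k\le|\mu\tau|,|\nu\tau|<c_k\}$; evaluating this count (it vanishes unless $m<|\mu\tau|,|\nu\tau|<2m$) yields $c_\tau = \frac{1}{l+1}\max\{0,\min\{i,j,m+1-i,m+1-j,l\}\}$ where $i=|\mu\tau|-m$ and $j=|\nu\tau|-m$, while the very same count applied to $\sum_{k=1}^{l}\frac{1}{l+1}P_{m,k}AP_{m,k}$ identifies this expression with $\kappa_m(i,j)$ (consistent with $\kappa_m$ vanishing off $\{1,\dots,m\}^2$). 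This is exactly the asserted formula for $P_m$. For $Q_m$ I would repeat the construction with every length threshold raised by $l$, keeping the matrix $\kappa_m$ — that is, with $\Sigma'_k := \Phi_{\min\{2m+l,\,2m+l-k+2\}} - \Phi_{m+l+k}$.

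The routine part is complete positivity, the contraction bound, and membership of the range in $\clsp\{t_\mu\Delta_{r(\mu)}t^*_\nu\}$: all of this is immediate once everything is exhibited as a compression by a projection followed by a subconvex combination. I expect the real work to be the bookkeeping in matching $c_\tau$ with $\kappa_m(|\mu\tau|-m,|\nu\tau|-m)$: one has to pin down the translation between the coordinate index $i\in\{1,\dots,m\}$ of $\kappa_m$ and the path length $|\mu\tau|$, install the truncation at $2m$ so that $P_m$ lands precisely on the set of summands named in the statement, and check that the count behaves correctly at the two ends of the band, where the minimum is governed by $i$ or $m+1-i$ rather than by $l$.
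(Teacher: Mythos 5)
Your proof is correct and follows essentially the same route as the paper: the paper defines $P_m(a) = M * \big((\Phi_{2m}-\Phi_m)a(\Phi_{2m}-\Phi_m)\big)$, where $M$ is the block matrix with $(p,q)$ block $\kappa_m(p-m,q-m)1_{E^p\times E^q}$, and justifies complete positivity and contractivity via exactly the band-projection decomposition $\kappa_m * A = \sum_{k}\frac{1}{\lceil m/2\rceil+1}P_{m,k}AP_{m,k}$ that you use. You merely merge the two steps into the single subconvex combination $\sum_k \frac{1}{l+1}\Sigma_k(\cdot)\Sigma_k$ of compressions, which is a cosmetic repackaging of the same argument, and your bookkeeping (including the vanishing at $|\mu\tau|=m$ via the convention $\kappa_m(0,\cdot)=0$) checks out.
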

\begin{proof}
We argue the case for $P_m$; the situation for $Q_m$ is similar. The multiplier
$\Phi_{2m} - \Phi_{m}$ obtained from Lemma~\ref{lem:compacts}
is a projection, and so compression by this element determines a
completely positive contraction. Let $M$ be the block operator matrix $M = \sum_{p,q}
\kappa_m(p - m, q - m) 1_{E^p \times E^q}$, where $1_{X,Y}$ denotes the matrix in
$M_{X,Y}(\CC)$ whose entries are all $1$. Since Schur multiplication by $\kappa_m$ is a
completely positive complete contraction, Schur multiplication $a \mapsto M*a$ by $M$ is
a completely positive contraction. So $P_m : a \mapsto M * (\Phi_{2m} - \Phi_{m}) a
(\Phi_{2m} - \Phi_{m})$ is a completely positive contraction satisfying the desired
formula.
\end{proof}

For what follows, we need to recall a construction from Section~4 of \cite{KribsSolel:JAMS07}.
If $E$ is a directed graph and $m$ is a positive integer, we define $E(m)$ to be the
directed graph with
\begin{align*}
E(m)^0 &= E^{<m},&
E(m)^1 &= \{(e, \mu) : e \in E^1, \mu \in E^{<m}, r(e) = s(\mu)\},\\
r(e,\mu) &= \mu,&
s(e,\mu) &= \begin{cases}
		e\mu &\text{ if $|\mu| < m-1$}\\
		s(e) &\text{ if $|\mu| = m-1$.}
	\end{cases}
\end{align*}
For $\mu \in E^*$, we write $[\mu]_m$ for the unique element of $E^{<m}$ such that $\mu =
[\mu]_m \mu'$ with $|\mu'| \in m\NN$. There is an injection $i_m : E^* \to E(m)^*$ given
by
\[
i_m(v) = v\quad\text{ for $v \in E^0$},
\]
and
\[
i_m(\mu)
	= (\mu_1, \mu_2\mu_3 \dots \mu_{|\mu|})
		(\mu_2, \mu_3 \dots \mu_{|\mu|})
		\dots
		(\mu_{|\mu|}, r(\mu))\quad\text{ if $1 \le |\mu| \le m$,}
\]
and recursively by $i_m(\mu) = i_m([\mu]_m) i_m(\mu') i_m(\mu'')$ when $\mu$ is
factorized as $\mu = [\mu]_m\mu'\mu''$ with $|\mu'| = m$. We have $s_{E(m)}(i_m(\mu)) =
[\mu]_m$ and $r_{E(m)}(i_m(\mu)) = r_E(\mu)$.

In the following, given $p < q \in \NN$, we shall write $E^{[p,q)}$ for the set $\{\mu
\in E^* : p \le |\mu| < q\}$.
		
\begin{lem}\label{lem:Lambda homomorphism}
Let $E$ be a row-finite directed graph. For $p,m \in \NN$ there is a homomorphism
$\Lambda_p^{p+m} : \bigoplus_{v \in E^0} \Kk_{E^{[p, p+m)} v} \to \Tt C^*(E(m))$ such
that
\[
\Lambda_p^{p+m}(\theta_{\mu,\nu}) = t_{i_m(\mu)} t^*_{i_m(\nu)}
\]
for all $\mu,\nu \in E^{[p, p+m)}$ with $r(\mu) = r(\nu)$.
\end{lem}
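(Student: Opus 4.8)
The plan is to realise the elements $t_{i_m(\mu)}t^*_{i_m(\nu)}$, for $\mu,\nu\in E^{[p,p+m)}v$, as a system of matrix units in $\Tt C^*(E(m))$ indexed by $E^{[p,p+m)}v$, one system for each $v\in E^0$. Once this is done, the characterisation of $\Kk_{E^{[p,p+m)}v}$ as the universal $C^*$-algebra on such a system (used already in the proof of Lemma~\ref{lem:compacts}) yields a homomorphism $\Kk_{E^{[p,p+m)}v}\to\Tt C^*(E(m))$ sending $\theta_{\mu,\nu}$ to $t_{i_m(\mu)}t^*_{i_m(\nu)}$; checking that these homomorphisms have mutually orthogonal ranges as $v$ varies lets them be assembled into the required $\Lambda_p^{p+m}$ on $\bigoplus_{v\in E^0}\Kk_{E^{[p,p+m)}v}$.

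Besides the two identities $s_{E(m)}(i_m(\mu))=[\mu]_m$ and $r_{E(m)}(i_m(\mu))=r_E(\mu)$ already recorded, I would note one more elementary property of $i_m$: an induction on path length (using the recursive definition of $i_m$, together with the fact that $i_m$ on $E^{\le m}$ is visibly length-preserving) shows $|i_m(\mu)|=|\mu|$ for every $\mu\in E^*$. The core of the proof is then the computation, for all $\nu,\alpha\in E^{[p,p+m)}$, of
\[
t^*_{i_m(\nu)}t_{i_m(\alpha)} = \delta_{\nu,\alpha}\,q_{r_E(\nu)}\qquad\text{in }\Tt C^*(E(m)).
\]
When $\nu=\alpha$ this is (TCK1) applied successively along the edges of $i_m(\nu)$, using $r_{E(m)}(i_m(\nu))=r_E(\nu)$. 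When $\nu\neq\alpha$, I would invoke the standard fact, valid in any Toeplitz graph algebra $\Tt C^*(F)$ and a routine consequence of its defining relations, that $t^*_\sigma t_\tau=0$ unless one of the paths $\sigma,\tau$ is an initial segment of the other. Thus if $t^*_{i_m(\nu)}t_{i_m(\alpha)}\neq0$ then one of $i_m(\nu),i_m(\alpha)$ is an initial segment of the other; in either case the two paths have the same source in $E(m)$, so $[\nu]_m=[\alpha]_m$ and hence $|\nu|\equiv|\alpha|\pmod m$. Since $p\le|\nu|,|\alpha|<p+m$, this forces $|\nu|=|\alpha|$; by length-preservation $i_m(\nu)$ and $i_m(\alpha)$ are then comparable paths of equal length, hence equal, and injectivity of $i_m$ gives $\nu=\alpha$, a contradiction.

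Granting the displayed identity, the matrix-unit relations are routine. Clearly $(t_{i_m(\mu)}t^*_{i_m(\nu)})^*=t_{i_m(\nu)}t^*_{i_m(\mu)}$, and for $\mu,\nu\in E^{[p,p+m)}v$, $\alpha,\beta\in E^{[p,p+m)}w$ the product $(t_{i_m(\mu)}t^*_{i_m(\nu)})(t_{i_m(\alpha)}t^*_{i_m(\beta)})$ equals $\delta_{\nu,\alpha}\,t_{i_m(\mu)}q_{r_E(\nu)}t^*_{i_m(\beta)}$, which is $\delta_{v,w}\,\delta_{\nu,\alpha}\,t_{i_m(\mu)}t^*_{i_m(\beta)}$ because $\nu=\alpha$ forces $r_E(\mu)=r_E(\nu)=r_E(\alpha)=r_E(\beta)$. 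Each $t_{i_m(\mu)}t^*_{i_m(\nu)}$ is nonzero, since $\|t_{i_m(\mu)}t^*_{i_m(\nu)}\|\ge\|t^*_{i_m(\mu)}(t_{i_m(\mu)}t^*_{i_m(\nu)})t_{i_m(\nu)}\|=\|q_v\|=1$ when $r_E(\mu)=r_E(\nu)=v$. Taking $v\neq w$ in the same product shows that the images of the $v$-th and $w$-th summands are orthogonal, so the summand-wise homomorphisms combine to $\Lambda_p^{p+m}$.

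The genuinely non-formal step is the displayed identity for $\nu\neq\alpha$, and specifically the impossibility of $i_m(\nu)$ and $i_m(\alpha)$ being \emph{properly} comparable. This is exactly where the width of the window $[p,p+m)$ is used: comparability of $i_m(\nu)$ and $i_m(\alpha)$ forces their $E(m)$-sources $[\nu]_m$ and $[\alpha]_m$ to coincide, hence $|\nu|$ and $|\alpha|$ to differ by a nonzero multiple of $m$ --- which a window of length exactly $m$ rules out. Everything else (length-preservation of $i_m$, the vanishing criterion for $t^*_\sigma t_\tau$, and assembling the summand maps) is standard bookkeeping.
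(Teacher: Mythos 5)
Your proof is correct and follows essentially the same route as the paper's: verify that the elements $t_{i_m(\mu)}t^*_{i_m(\nu)}$ form systems of matrix units and invoke the universal property of $\Kk_{E^{[p,p+m)}v}$, the key step in both arguments being that comparability of $i_m(\nu)$ and $i_m(\alpha)$ forces $[\nu]_m=[\alpha]_m$, hence $|\nu|\equiv|\alpha|\pmod m$, which the window of width $m$ rules out unless $\nu=\alpha$. The only cosmetic difference is that you fold the paper's separate case (where neither of $\nu,\alpha$ extends the other in $E$) into a single argument via length-preservation of $i_m$.
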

\begin{proof}
We clearly have $(t_{i_m(\mu)} t^*_{i_m(\nu)})^* = t_{i_m(\nu)} t^*_{i_m(\mu)}$. We show
that
\[
t_{i_m(\mu)} t^*_{i_m(\nu)} t_{i_m(\alpha)} t^*_{i_m(\beta)}
	= \delta_{\nu, \alpha} t_{i_m(\mu)} t^*_{i_m(\beta)}.
\]
First suppose $\nu = \alpha$. Then $i_m(\nu) = i_n(\alpha)$. Hence~(TCK1) gives
$t_{i_m(\mu)} t^*_{i_m(\nu)} t_{i_m(\alpha)} t^*_{i_m(\beta)} = t_{i_m(\mu)} q_{r(\nu)}
t_{i_m(\beta)}^*$. Since $r(\mu) = r(\nu)$, we have $r_{E(m)}(i(\mu)) = r(\nu)$, and so
applying~(TCK1) again gives $t_{i_m(\mu)} q_{r(\nu)} t_{i_m(\beta)}^* = t_{i_m(\mu)}
t_{i_m(\beta)}^*$ as required. Now suppose that $\nu \not= \alpha$. Assume without loss
of generality that $|\nu| \ge |\alpha|$. We consider two cases. First suppose that
$\nu\not= \alpha\nu'$ for any $\nu'$. Then there exists $i < |\alpha|$ such that $\nu_i
\not= \alpha_i$. Since $i_m(\nu)_i = (\nu_i, \tau)$ and $i_m(\alpha)_i = (\alpha_i,
\rho)$ for some $\tau,\rho \in E^{<m}$, we have $i_m(\nu)_i \not= i_m(\alpha)_i$. In
particular $i_m(\nu)$ does not have the form $i_m(\alpha)\beta$ for $\beta \in E(m)^*$,
and so~(TCK1) in $\Tt C^*(E(m))$ gives $t_{i_m(\nu)}^* t_{i_m(\alpha)} = 0$. Now suppose
that $\nu = \alpha\nu'$. Then $|\nu'| > 0$, and since $\alpha,\nu \in E^{[p, p+m)}$, we
have $|\nu'| < m$. Thus $|\nu| \not\equiv |\alpha| \mod m$, and in particular
$s(i_m(\nu)) = [\nu]_m \not= [\alpha]_m = s(i_m(\alpha))$. So~(TCK1) implies that
$t_{i_m(\nu)}^* t_{i_m(\alpha)} = 0$.

Hence $\{t_{i_m(\mu)} t^*_{i_m(\nu)} : \mu,\nu \in E^{[p, m+p)} v\}$ is a family of
matrix units for each $v$, and it follows from the universal properties of the
$\Kk_{E^{<m}v}$ that there is a homomorphism $\Lambda_p^{p+m}$ as claimed.
\end{proof}

The next lemma and its proof are due to James Rout, and will appear in his PhD thesis. We
thank James for providing us with the details.

\begin{lem}[Rout]\label{lem:rout}
If $E$ is a row-finite directed graph and $m \ge 0$, then there is an injective
homomorphism $\iota_m : \Tt C^*(E) \to \Tt C^*(E(m))$ such that
\begin{equation}\label{eq:JRinclusion}
\iota_m(q_v) = \sum_{\mu \in vE^{<m}} q^m_\mu\quad\text{ and }\quad
\iota_m(t_e) = \sum_{(e,\mu) \in E(m)^1} t^m_{(e,\mu)}.
\end{equation}
The map $\iota_m$ descends to an injective homomorphism $\widetilde{\iota}_m : C^*(E) \to
C^*(E(m))$.
\end{lem}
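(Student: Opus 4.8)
The plan is to define $\iota_m$ through the universal property of $\Tt C^*(E)$, then deduce injectivity from a gauge-invariant uniqueness theorem, and finally check that $\iota_m$ carries the ideal of $\Tt C^*(E)$ defining $C^*(E)$ into the corresponding ideal of $\Tt C^*(E(m))$, which produces $\widetilde\iota_m$. For the construction, write $\{q^m_\mu : \mu \in E^{<m}\} \cup \{t^m_f : f \in E(m)^1\}$ for the generating Toeplitz-Cuntz-Krieger $E(m)$-family and set $Q_v := \sum_{\mu \in vE^{<m}} q^m_\mu$ and $T_e := \sum_{(e,\mu)\in E(m)^1} t^m_{(e,\mu)}$, which are finite sums because $E$ is row-finite. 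I would check that $\{Q_v,T_e\}$ is a Toeplitz-Cuntz-Krieger $E$-family: the $Q_v$ are mutually orthogonal projections because a path of $E$ has a unique source, so the sets $vE^{<m}$ are pairwise disjoint; for (TCK1), the standard orthogonality relations among the $q^m_\mu$ and $t^m_f$ (as used in the proof of Lemma~\ref{lem:compacts}) give $(t^m_{(e,\mu)})^* t^m_{(e,\mu')} = \delta_{\mu,\mu'} q^m_\mu$, so $T_e^* T_e = \sum_{\mu \in r(e)E^{<m}} q^m_\mu = Q_{r(e)}$; and for (TCK2) the combinatorial heart is that the edges of $E(m)$ with $E$-source $v$ — i.e. the $(e,\mu)$ with $s_E(e)=v$ — are precisely the edges $f$ of $E(m)$ with $s_{E(m)}(f) \in vE^{<m}$, since $s_{E(m)}(e,\mu)$ equals $e\mu$ or $s_E(e)$ and both have $E$-source $s_E(e)$. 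Partitioning these edges by their $E(m)$-source and adding the (TCK2) inequalities of $\Tt C^*(E(m))$ at the pairwise orthogonal vertices $\nu \in vE^{<m}$ gives $\sum_{e\in vE^1}T_eT_e^* = \sum_{\nu\in vE^{<m}}\sum_{f\in\nu E(m)^1}t^m_f(t^m_f)^* \le \sum_{\nu\in vE^{<m}}q^m_\nu = Q_v$. The universal property of $\Tt C^*(E)$ then yields $\iota_m$.

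For injectivity of $\iota_m$, the computations above show $\iota_m(\Delta_v) = Q_v - \sum_{e\in vE^1}T_eT_e^* = \sum_{\nu\in vE^{<m}}\big(q^m_\nu - \sum_{f\in\nu E(m)^1}t^m_f(t^m_f)^*\big)$, a sum of mutually orthogonal gap projections of $\Tt C^*(E(m))$, each nonzero because the inequality in (TCK2) is strict in a Toeplitz algebra; hence $\iota_m(\Delta_v)\ne 0$, and more generally $\iota_m(q_v - \sum_{e\in F}t_et_e^*) \ge \iota_m(\Delta_v) > 0$ for every finite $F \subseteq vE^1$. Moreover $\iota_m$ intertwines the gauge actions of $\TT$, since $\iota_m(q_v)$ is gauge-fixed while $\iota_m(t_e)$ is a sum of degree-one elements and so is scaled by $z$. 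The gauge-invariant uniqueness theorem for Toeplitz-Cuntz-Krieger algebras then shows that $\iota_m$ is injective.

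For the descent I use the standing hypothesis that $E$ has no sinks, which forces $E(m)$ to have no sinks: a vertex $\nu$ of $E(m)$ with $|\nu|\ge 1$ emits the edge obtained by splitting off the first edge of $\nu$, and a vertex $v\in E^0$ of $E(m)$ emits $(e,\mu)$ for any $e\in vE^1$ and any $\mu\in r(e)E^{m-1}$, such $\mu$ existing because $E$ has no sinks. Consequently each gap projection $q^m_\nu - \sum_{f\in\nu E(m)^1}t^m_f(t^m_f)^*$ appearing above sits at a non-sink of $E(m)$, hence lies in the kernel $\mathcal{I}'$ of the canonical surjection $\Tt C^*(E(m)) \to C^*(E(m))$. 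Since $C^*(E) = \Tt C^*(E)/\mathcal{I}$ with $\mathcal{I}$ generated by $\{\Delta_v : v\in E^0\}$ (again using that $E$ has no sinks), the formula for $\iota_m(\Delta_v)$ gives $\iota_m(\mathcal{I}) \subseteq \mathcal{I}'$, so $\iota_m$ descends to a homomorphism $\widetilde\iota_m : C^*(E) \to C^*(E(m))$. Finally $\widetilde\iota_m$ is again gauge-equivariant, and $\widetilde\iota_m(q_v)$ — the image of $\sum_{\mu\in vE^{<m}}q^m_\mu$ — dominates the image of $q^m_v$, which is nonzero since vertex projections of graph $C^*$-algebras are nonzero; the gauge-invariant uniqueness theorem for graph $C^*$-algebras then shows $\widetilde\iota_m$ is injective.

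The step I expect to be the main obstacle is the verification of (TCK2) in the construction: the bookkeeping showing that the edges of $E(m)$ with $E$-source $v$ partition, according to their $E(m)$-source, into the edge sets emitted by the vertices of $vE^{<m}$ — this is exactly the identity that makes the (TCK2) inequalities of $\Tt C^*(E(m))$ add up to the one we want. A secondary subtlety is the descent: without the no-sinks hypothesis a vertex of $E$ can become a sink of $E(m)$, and then $\iota_m$ need not carry $\mathcal{I}$ into $\mathcal{I}'$, so that hypothesis is genuinely used there.
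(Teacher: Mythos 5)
Your proof is correct and follows essentially the same route as the paper: define the family $\{Q_v,T_e\}$ and invoke the universal property of $\Tt C^*(E)$, deduce injectivity of $\iota_m$ from the nonvanishing of the Toeplitz gap projections $\iota_m\big(q_v-\sum_{e\in F}t_et_e^*\big)\ge\iota_m(\Delta_v)>0$, and deduce injectivity of $\widetilde\iota_m$ from gauge-equivariance and the nonvanishing of vertex projections. The paper compresses the verification of (TCK1)--(TCK2) into the phrase ``routine calculations,'' so your partition of the edges of $E(m)$ with $E$-source $v$ according to their $E(m)$-source is precisely the computation being elided; it is also what underlies the paper's identity~\eqref{eq:CKcompare}.

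The one place where you are more careful than the paper is the descent, and your ``secondary subtlety'' is a genuine catch: the statement of the lemma omits the no-sinks hypothesis, but without it the descent can fail. Concretely, if $E$ consists of a single edge $e$ from $v$ to a sink $w$ and $m=2$, then $v$ becomes a sink of $E(2)$, the term $q^2_v-\sum_{s_{E(2)}(f)=v}t^2_f(t^2_f)^*=q^2_v$ in~\eqref{eq:CKcompare} survives in $C^*(E(2))$, and no homomorphism with the stated values on generators exists, since it would have to send $p_v=s_es_e^*$ to both $p^2_v+p^2_e$ and $p^2_e$. The hypothesis is present everywhere the lemma is actually applied (the graphs there are row-finite with no sinks), so your proof establishes the corrected statement that the paper in fact uses.
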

\begin{proof}
Routine calculations show that the $\iota_m(q_v)$ and $\iota_m(t_e)$ form a
Toeplitz-Cuntz-Krieger $E$-family, and so induce a homomorphism $\iota_m : \Tt C^*(E) \to
\Tt C^*(E(m))$ satisfying the desired formula. For $v \in E^0$ we have
\begin{equation}\label{eq:CKcompare}
\iota_m\Big(q_v - \sum_{e \in vE^1} t_e t^*_e\Big)
    = \sum_{\mu \in v E^{<m}} \Big(q^m_\mu - \sum_{s_{E(m)}(e, \nu) = \mu} t^m_{(e,\nu)} (t^m_{(e,\nu)})^*\Big).
\end{equation}
This is nonzero because each $q^m_\mu - \sum_{\lambda \in \mu E(m)^1} t^m_\lambda
(t^m_\lambda)^*$ is nonzero in $\Tt C^*(E(m))$. Hence
\cite[Theorem~2.1]{FowlerRaeburn:IUMJ99} implies that $\iota_m$ is injective.

The quotient map $\pi_m : \Tt C^*(E(m)) \to C^*(E(m))$ carries each term in the
right-hand side of~\eqref{eq:CKcompare} to zero, and so $\pi_m \circ \iota_m$ descends to
a homomorphism $\tilde{\iota}_m : C^*(E) \to C^*(E(m))$. The $\tilde{\iota}_m(p_v)$ are
all nonzero because the vertex projections $\pi_m(q^m_\mu)$ in $C^*(E(m))$ are nonzero.
The homomorphism $\tilde{\iota}_m$ is clearly equivariant for the gauge actions on
$C^*(E)$ and $C^*(E(m))$, and so the gauge-invariant uniqueness theorem
\cite[Theorem~3.1]{BatesPaskEtAl:NYJM00} implies that $\tilde{\iota}_m$ is injective.
\end{proof}

\begin{lem}\label{lem:equiv-exprs}
Let $E$ be a row-finite directed graph and let $m \ge 1$. Let $q, t$ be the universal
Toeplitz-Cuntz-Krieger family in $\Tt C^*(E)$, and let $Q,T$ be the universal
Toeplitz-Cuntz-Krieger family in $\Tt C^*(E(m))$. For $\mu \in E^*$, we have
\[
	T_{i_m(\mu)} = \iota_m(t_\mu) Q_{r(\mu)} = Q_{[\mu]_m} \iota_m(t_\mu).
\]
\end{lem}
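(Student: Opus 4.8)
The plan is to expand $\iota_m(t_\mu)$ using the defining formula~\eqref{eq:JRinclusion} of Lemma~\ref{lem:rout} and then track path lengths modulo $m$. The case $\mu \in E^0$ is immediate from the formula for $\iota_m(q_v)$ and mutual orthogonality of the vertex projections, so suppose $\mu = \mu_1\cdots\mu_n$ with $n = |\mu| \ge 1$. Multiplying out $\iota_m(t_\mu) = \iota_m(t_{\mu_1})\cdots\iota_m(t_{\mu_n})$ with~\eqref{eq:JRinclusion} gives
\[
\iota_m(t_\mu) = \sum_{(\lambda_1, \dots, \lambda_n)} T_{(\mu_1, \lambda_1)} \cdots T_{(\mu_n, \lambda_n)},
\]
the sum being over all tuples with $\lambda_k \in r(\mu_k)E^{<m}$ for each $k$. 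The Toeplitz-Cuntz-Krieger relations force $T_f T_g = 0$ whenever $r_{E(m)}(f) \ne s_{E(m)}(g)$, so a summand is nonzero precisely when $w := (\mu_1, \lambda_1)\cdots(\mu_n, \lambda_n)$ is a path in $E(m)^*$, i.e.\ when $\lambda_k = s_{E(m)}(\mu_{k+1}, \lambda_{k+1})$ for $1 \le k < n$, and then the summand equals $T_w$. In particular, such a composable word is determined by its sequence of first coordinates $\mu_1, \dots, \mu_n$ together with its range vertex $\lambda_n$. An easy induction on $|\mu|$ using the recursive definition of $i_m$ shows that $i_m(\mu)$ is itself a path whose $n$ edges have first coordinates $\mu_1, \dots, \mu_n$ in order; since we are given that $r_{E(m)}(i_m(\mu)) = r(\mu)$ and $s_{E(m)}(i_m(\mu)) = [\mu]_m$, it follows that $i_m(\mu)$ is \emph{the} composable word with first coordinates $\mu_1, \dots, \mu_n$ and range vertex $r(\mu)$.

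For the first equality, observe that $T_{(\mu_n, \lambda_n)} Q_{r(\mu)} = T_{(\mu_n, \lambda_n)} Q_{\lambda_n} Q_{r(\mu)}$ vanishes unless $\lambda_n = r(\mu)$, so multiplying the displayed expansion on the right by $Q_{r(\mu)}$ leaves at most one nonzero summand; by the previous paragraph this surviving word is $i_m(\mu)$, whence $\iota_m(t_\mu) Q_{r(\mu)} = T_{i_m(\mu)}$.

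For the second equality, the crucial ingredient is a length count modulo $m$: whenever $(e, \nu) \in E(m)^1$ we have $|s_{E(m)}(e, \nu)| \equiv |\nu| + 1 \pmod{m}$ (it equals $|\nu| + 1$ unless $|\nu| = m - 1$, when it is $0 \equiv m$), so along any composable tuple $|\lambda_k| \equiv |\lambda_{k+1}| + 1 \pmod{m}$ and hence $|s_{E(m)}(\mu_1, \lambda_1)| \equiv |\lambda_n| + n \pmod{m}$. Using $\iota_m(t_\mu) = \iota_m(t_\mu q_{r(\mu)}) = \sum_{\alpha \in r(\mu)E^{<m}} \iota_m(t_\mu) Q_\alpha$ and applying the right-multiplication analysis to each $\iota_m(t_\mu) Q_\alpha$ (with $r(\mu)$ replaced by $\alpha$), each of these reduces to the single summand indexed by $\lambda_n = \alpha$, whose source vertex, when the summand is nonzero, has length congruent to $|\alpha| + n$ modulo $m$. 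Since $|[\mu]_m| \equiv n \pmod{m}$, the only $\alpha \in r(\mu)E^{<m}$ for which $Q_{[\mu]_m}\iota_m(t_\mu) Q_\alpha$ can be nonzero is the length-zero path $\alpha = r(\mu)$; therefore $Q_{[\mu]_m}\iota_m(t_\mu) = Q_{[\mu]_m}\iota_m(t_\mu) Q_{r(\mu)} = Q_{[\mu]_m} T_{i_m(\mu)} = T_{i_m(\mu)}$, the last equality because $s_{E(m)}(i_m(\mu)) = [\mu]_m$. The part requiring care is the ``reset'' in the source map of $E(m)$ when a path reaches length $m - 1$ --- in particular the case $m \mid |\mu|$, in which $[\mu]_m$ is a vertex --- together with checking that the recursive definition of $i_m$ genuinely produces words with first coordinates $\mu_1, \dots, \mu_n$ (including the short-path base cases and $m = 1$); this is where most of the routine bookkeeping lies.
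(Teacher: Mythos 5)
Your proof is correct, but it is organized quite differently from the paper's. The paper proves both identities by telescoping: for $|\mu| < m$ it collapses the product $\prod_{k}\big(\sum_{(\mu_k,\nu)\in E(m)^1} T_{(\mu_k,\nu)}\big)$ one factor at a time, working from the right against $Q_{r(\mu)}$ for the first identity and from the left against $Q_{[\mu]_m}$ for the second, then treats $|\mu| = m$ by a separate (and slightly more delicate) computation, and finally handles general $\mu$ by induction on $|\mu|$. You instead expand the whole product at once, observe that the nonzero summands are exactly the composable words in $E(m)^*$ whose edges have first coordinates $\mu_1,\dots,\mu_n$, and note that such a word is determined by its range vertex $\lambda_n$; the first identity then falls out immediately, and for the second you introduce the invariant $|s_{E(m)}(w)| \equiv |\lambda_n| + n \pmod m$ to show that left multiplication by $Q_{[\mu]_m}$ isolates the same single term $\lambda_n = r(\mu)$. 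This buys you a uniform argument with no case split at $|\mu| = m$ and no induction on $|\mu|$, beyond the routine verification (which you correctly flag) that $i_m(\mu)$ really is the unique composable word with first coordinates $\mu_1,\dots,\mu_n$ and range $r(\mu)$. The cost is that the mod-$m$ source-length bookkeeping --- which is correct: the ``reset'' at $|\nu| = m-1$ sends the source length from $m$ to $0$, preserving the congruence --- carries the weight that the paper's left-to-right telescoping handles mechanically. Both arguments ultimately rest on the same ingredients: (TCK1), mutual orthogonality of the vertex projections of $E(m)$, row-finiteness (so all the sums are finite), and the identities $s_{E(m)}(i_m(\mu)) = [\mu]_m$ and $r_{E(m)}(i_m(\mu)) = r(\mu)$.
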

\begin{proof}
First suppose that $l := |\mu| < m$. Then
\begin{align*}
\iota_m(t_\mu) Q_{r(\mu)}
	&= \iota_m(t_{\mu_1}) \dots \iota_m(t_{\mu_{l-1}}) \iota_m(t_{\mu_l}) Q_{r(\mu)}\\
	&= \Big(\sum_{(\mu_1, \nu) \in E(m)^1} T_{(\mu_1, \nu)}\Big)
		\dots
		\Big(\sum_{(\mu_{l-1}, \nu) \in E(m)^1} T_{(\mu_{l-1}, \nu)}\Big)
		\Big(\sum_{(\mu_l, \nu) \in E(m)^1} T_{(\mu_l, \nu)}\Big) Q_{r(\mu)}\\
	&= \Big(\sum_{(\mu_1, \nu) \in E(m)^1} T_{(\mu_1, \nu)}\Big)
		\dots
		\Big(\sum_{(\mu_{l-1}, \nu) \in E(m)^1} T_{(\mu_{l-1}, \nu)}\Big)
		T_{(\mu_l, r(\mu))}\\
	&= \Big(\sum_{(\mu_1, \nu) \in E(m)^1} T_{(\mu_1, \nu)}\Big)
		\dots
		\Big(\sum_{(\mu_{l-1}, \nu) \in E(m)^1} T_{(\mu_{l-1}, \nu)}\Big)
		Q_{\mu_l} T_{(\mu_l, r(\mu))} \\
	&\;\vdots \\
	&= T_{(\mu_1, \mu_2\dots\mu_l)} \dots T_{(\mu_l, r(\mu))} \\
	&= T_{i_m(\mu)},
\end{align*}
and
\begin{align*}
Q_{[\mu]_m} \iota_m(t_\mu)
	&= Q_{\mu} \iota_m(t_\mu)\\
	&= Q_{\mu} \Big(\sum_{(\mu_1, \nu) \in E(m)^1} T_{(\mu_1, \nu)}\Big)
		\dots
		\Big(\sum_{(\mu_l, \nu) \in E(m)^1} T_{(\mu_l, \nu)}\Big)\\
    &= T_{(\mu_1, \mu_2 \dots \mu_l)}
    	\Big(\sum_{(\mu_2, \nu) \in E(m)^1} T_{(\mu_2, \nu)}\Big)
   		\dots
   		\Big(\sum_{(\mu_l, \nu) \in E(m)^1} T_{(\mu_l, \nu)}\Big)\\
   	&\;\vdots\\
   	&= T_{(\mu_1, \mu_2 \dots \mu_l)}
   	   \dots
   	   T_{(\mu_l, r(\mu))}\\
   	&= T_{i_m(\mu)}.	
\end{align*}
Now suppose that $|\mu| = m$. Then, using the above calculations, we have
\begin{align*}
\iota_m(t_\mu) Q_{r(\mu)}
	&= \Big(\sum_{(\mu_1, \nu) \in E(m)^1} T_{(\mu_1, \nu)}\Big) Q_{\mu_2 \dots\mu_m} 	
		T_{(\mu_2, \mu_3\dots\mu_l)} \dots T_{(\mu_m, r(\mu))}\\
	&= Q_{s(\mu_1, \mu_2\dots\mu_m)} T_{(\mu_1, \mu_2\dots\mu_m)}
		T_{(\mu_2, \mu_3\dots\mu_l)} \dots T_{(\mu_m, r(\mu))}
	= T_{i_m(\mu)},
\end{align*}
and
\begin{align*}
Q_{[\mu]_m} \iota_m(t_\mu)
	&= Q_{s(\mu)} \Big(\sum_{(\mu_1, \nu) \in E(m)^1} T_{(\mu_1, \nu)}\Big)
		\dots \Big(\sum_{(\mu_m, \nu) \in E(m)^1} T_{(\mu_m, \nu)}\Big) \\
	&= \sum_{\nu \in r(\mu_1)E^{m-1}} \Big(T_{(\mu_1, \nu)}
		\Big(\sum_{(\mu_2, \eta) \in E(m)^1} Q_\nu T_{(\mu_2, \eta)}\Big) \dots
		\Big(\sum_{(\mu_m, \eta) \in E(m)^1} T_{(\mu_m, \eta)}\Big)\Big) \\
	&= \sum_{\nu \in r(\mu_2)E^{m-2}} \Big(T_{(\mu_1, \mu_2\nu)}
		T_{(\mu_2, \nu)}\Big(\sum_{(\mu_3,\eta) \in E(m)^1} T_{(\mu_3, \eta)}\Big) \dots
		\Big(\sum_{(\mu_m, \nu) \in E(m)^1} T_{(\mu_m, \nu)}\Big)\Big) \\
	&\;\vdots\\
	&= T_{(\mu_1, \mu_2\dots\mu_m)} T_{(\mu_2, \mu_3\dots\mu_l)} \dots T_{(\mu_m,
	r(\mu))}
	= T_{i_m(\mu)}.
\end{align*}
Now a straightforward induction on $|\mu|$ using the case $|\mu| \le m$ as a base case
establishes the result.
\end{proof}

\begin{prp}\label{prp:approximates}
Let $E$ be a row-finite directed graph with no sinks. For each integer $m \ge 1$, let $q_m : \Tt
C^*(E(m)) \to C^*(E(m))$ be the quotient map. Then for $\mu,\nu \in E^*$ with $r(\mu) =
r(\nu)$, we have
\[
\big\|q_m\big(\Lambda_{m}^{2m}(P_m(t_\mu t^*_\nu)) +
	\Lambda^{\lceil\frac{5m}{2}\rceil}_{\lceil\frac{3m}{2}\rceil}
		(Q_m(t_\mu t^*_\nu))\big) - q_m(\iota_{m}(t_\mu t^*_\nu))\big\| \to 0
\]
as $m \to \infty$.
\end{prp}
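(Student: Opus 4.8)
The plan is to mimic the computation in \cite[Section~7]{WinterZacharias:AM10}: expand both terms using the explicit formulas available to us, and show that the sum telescopes, up to an error that vanishes as $m\to\infty$, to $q_m(\iota_m(t_\mu t^*_\nu))$.

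First I would use Lemma~\ref{lem:equiv-exprs} together with Lemma~\ref{lem:compacts} to rewrite $q_m(\iota_m(t_\mu t^*_\nu))$ in terms of the matrix units $t_{i_m(\alpha)} t^*_{i_m(\beta)}$. Since $E$ has no sinks, $\iota_m(t_\mu) = \iota_m(t_\mu)\Phi_{2m+l}$ modulo the kernel of $q_m$ once we absorb the $\Delta$'s, and one can unfold $\iota_m(t_\mu t_\nu^*)$ as a (strictly convergent) sum $\sum_\tau t_{i_m(\mu\tau)}\Delta^m_{r(\tau)} t^*_{i_m(\nu\tau)}$, but after applying $q_m$ the finitely supported "tail" terms survive. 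Concretely, I would aim to show that $q_m\big(\iota_m(t_\mu t_\nu^*)\big) = q_m\big(\Lambda_0^\infty(\text{full sum of matrix units})\big)$ in an appropriate limiting sense, so the whole statement reduces to an identity about Schur multipliers: the claim becomes that $\Lambda$ applied to $\kappa_m * (\text{compression to }[m,2m)) + \kappa_m*(\text{compression to }[m+l,2m+l))$ approximates, after passing to the quotient, $\Lambda$ applied to the honest matrix unit $\theta_{i_m(\mu),i_m(\nu)}$.

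Next I would exploit the key numerical fact about $\kappa_m$: for any fixed path length $j$, as one slides a length-$m$ window across the range, the two shifted copies of $\kappa_m$ (one on $[m,2m)$, one on $[m+l,2m+l)$ with $l=\lceil m/2\rceil$) have entries that sum to a value tending to $1$ uniformly on $\{(i,j) : |i-j| \le |\mu|\vee|\nu|\}$ — this is exactly the "partition of unity on the diagonal band" property that Winter and Zacharias engineered $\kappa_m$ to have (the two windows are offset by half a window-width, so their $\min\{i,j\}/(\lceil m/2\rceil+1)$ profiles overlap and add to approximately $1$ away from the boundaries). Applying $\Lambda$ (a homomorphism, hence isometric on its image of matrix units after quotienting, since each $t_{i_m(\alpha)}t^*_{i_m(\beta)}$ has norm $1$) converts "the Schur coefficients sum to $\approx 1$" into "the operator sums to within $o(1)$ of the matrix unit". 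The off-diagonal displacement $|\,|\mu\tau| - |\nu\tau|\,| = |\,|\mu|-|\nu|\,|$ is constant in $\tau$, so only a band of fixed width is involved and the boundary terms where the $\kappa_m$ profile has not yet reached its plateau contribute at most $O(1/m)$ to the norm.

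The main obstacle I anticipate is bookkeeping the ranges of summation and the matching of window positions: one must check that $\Lambda_m^{2m}\circ P_m$ and $\Lambda_{\lceil 3m/2\rceil}^{\lceil 5m/2\rceil}\circ Q_m$ land in the same copy of $\Tt C^*(E(m))$-matrix-units indexed by $i_m(\mu\tau),i_m(\nu\tau)$ for overlapping ranges of $|\mu\tau|$, so that their images can actually be added coefficientwise rather than living in "different blocks". This requires carefully tracking that $i_m$ is compatible with the factorization $\mu = [\mu]_m\mu'$, and that the homomorphisms $\Lambda_p^{p+m}$ from Lemma~\ref{lem:Lambda homomorphism} agree on the overlap of their domains — i.e. a compatibility/gluing statement for the $\Lambda$'s. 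Once that is pinned down, the estimate is the same $\varepsilon/2 + \varepsilon/2$ argument as in \cite{WinterZacharias:AM10}, with the norm of the error controlled by $\|\kappa_m\|_{\text{relevant band complement}} + 2(1 - \text{plateau value}) \to 0$. Finally, since $\{t_\mu t^*_\nu\}$ spans $\Tt C^*(E)$, linearity and continuity extend the estimate from these generators to justify the pointwise statement; but as the proposition is stated only for the generators $t_\mu t_\nu^*$, that last remark is not strictly needed here.
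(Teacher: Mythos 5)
Your strategy is essentially the paper's: expand $\Lambda_{m}^{2m}\circ P_m$ and $\Lambda^{\lceil\frac{5m}{2}\rceil}_{\lceil\frac{3m}{2}\rceil}\circ Q_m$ into the matrix units $t_{i_m(\mu\tau)}t^*_{i_m(\nu\tau)}$, use Lemma~\ref{lem:equiv-exprs} to rewrite both in terms of $\iota_m$ so that they can be added coefficientwise, and exploit the fact that the two copies of $\kappa_m$, offset by $\lceil\frac{m}{2}\rceil$, have entries summing to within $\bigl||\mu|-|\nu|\bigr|/(m+1)$ of $1$; since the relevant summands sit over mutually orthogonal vertex projections $P_\alpha$, $\alpha\in r(\mu)E^{<m}$, the error is a maximum rather than a sum and tends to $0$. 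One step of your sketch is misstated, however: you cannot have $\iota_m(t_\mu)\equiv\iota_m(t_\mu)\Phi_{2m+l}$ modulo $\ker q_m$, because $E(m)$ has no sinks when $E$ has none, so every $\Delta_\lambda$ and hence every $\Phi_k$ is killed by $q_m$; your identity would force $q_m(\iota_m(t_\mu))=0$, contradicting the injectivity of $\tilde{\iota}_m$. The mechanism that actually makes the expansion collapse in the quotient is the Cuntz--Krieger relation: for each residue class $i=|\tau|\bmod m$, the inner sums $\sum_{\rho\in r(\alpha)E^{mj}}\iota_m(t_\rho t^*_\rho)$ all map under $q_m$ to $\tilde{\iota}_m(p_{r(\alpha)})$, and only the single level $j(i)$ with $0\le|\mu|+i+m(j(i)-1)<m$ carries a nonzero $\kappa_m$-coefficient, so each residue class contributes exactly one weight $K_{m,i}$. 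With that correction your outline matches the paper's proof.
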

\begin{proof}
Identify each $\Kk_{E^{<p}}$ with $\clsp\{s_\mu \Delta_{r(\mu)} s^*_\nu : \mu,\nu \in
E^{<p}\}$ as in Lemma~\ref{lem:compacts}. Fix $\mu,\nu$ with $r(\mu) = r(\nu) = v$.
Combining the formula for $P_m(t_\mu t^*_\nu)$ from Lemma~\ref{lem:PmQm} with our
convention that $\kappa_m(i,j) = 0$ if $(i,j) \not\in \{1, \dots, m\} \times \{1, \dots,
m\}$, we have
\[
P_m(t_\mu t^*_\nu)
	= \sum_{\tau \in r(\mu)E^*}
		    \kappa_m(|\mu\tau|-m, |\nu\tau|-m) t_{\mu\tau}\Delta_{r(\tau)} t^*_{\nu\tau}.
\]
Applying the definition of $\Lambda_{m}^{2m}$ and then Lemma~\ref{lem:equiv-exprs}, we
obtain
\begin{align*}
\Lambda_{m}^{2m}(P_m(t_\mu t^*_\nu))
	&=  \sum_{\tau \in r(\mu)E^*}
	    \kappa_m(|\mu\tau|-m, |\nu\tau|-m)
	     t_{i_{m}(\mu\tau)} t^*_{i_{m}(\nu\tau)} \\
	&= \sum^{m-1}_{i=0} \sum_{j=1}^\infty
		\kappa_m(|\mu| + i + m(j-1), |\nu| + i + m(j-1))  \\
		&\qquad\Big(\sum_{\alpha \in r(\mu)E^i} \iota_{m}(t_{\mu\alpha})
     			\Big(\sum_{\rho \in r(\alpha)E^{mj}} \iota_{m}(t_\rho t^*_\rho)\Big)
				Q_{\alpha} \iota_{m}(t^*_{\nu\alpha})\Big).
\end{align*}
Suppose that $m > |\mu|, |\nu|$. Then for each $i < m$ there exists a unique $j(i) \in
\ZZ$ such that $0 \le |\mu| + i + m(j(i)-1) < m$. So applying $q_m$ to both sides of the
preceding calculation yields
\begin{align*}
q_m&(\Lambda_{m}^{2m}(P_m(t_\mu t^*_\nu))) \\
	&= \sum^{m-1}_{i=0}
	\kappa_m(|\mu| + i + m(j(i)-1), |\nu| + i + m(j(i)-1)) \\
	&\hskip1cm\Big(\sum_{\alpha \in r(\mu)E^i} \tilde{\iota}_{m}(s_{\mu\alpha})
     			\tilde{\iota}_{m}\Big(\sum_{\tau \in r(\alpha)E^{mj(i)}} s_\tau s^*_\tau\Big)
				\tilde{\iota}_{m}(s^*_{\alpha}) P_{\alpha} \tilde{\iota}_{m}(s^*_{\nu})\Big)\\
	&=  \sum^{m-1}_{i=0}
		\big(\kappa_m(|\mu| + i + m(j(i)-1), |\nu| + i + m(j(i)-1)) \\
		&\hskip1cm\Big(\sum_{\alpha \in r(\mu)E^i} \tilde{\iota}_{m}(s_{\mu\alpha})
					P_{r(\alpha)} \tilde{\iota}_{m}(s^*_{\nu\alpha})\Big),
\end{align*}
where the last equality comes from the Cuntz-Krieger relation and
Lemma~\ref{lem:equiv-exprs}.

Similar reasoning gives
\begin{align*}
q_m(\Lambda^{\lceil\frac{5m}{2}\rceil}_{\lceil\frac{3m}{2}\rceil}
	(Q_m(t_\mu t^*_\nu)))
	&=  \sum^{m-1}_{i=0}
		\kappa_m(|\mu| + i + \lceil m(k(i)-3/2)\rceil, |\nu| + i + \lceil m(k(i)-3/2)\rceil) \\
		&\hskip4cm\Big(\sum_{\alpha \in r(\mu)E^i} \tilde{\iota}_{m}(s_{\mu\alpha})
					P_{r(\alpha)} \tilde{\iota}_{m}(s^*_{\nu\alpha})\Big),
\end{align*}
where each $k(i)$ is the unique integer such that $0 \le |\mu| + i + \lceil
m(k(i)-3/2)\rceil < m$. Since $|\mu| + i + m(j(i)-1)$ and $|\mu| + i + \lceil
m(k(i)-3/2)\rceil$ differ by an odd multiple of $\lceil\frac{m}{2}\rceil$, the sum
\begin{align*}
K_{m,i} &:= \kappa_m\Big(|\mu| + i + m(j(i)-1), |\nu| + i + m(j(i)-1)\Big) \\
	&\qquad{}+ \kappa_m\Big(|\mu| + i + \lceil m(k(i)-3/2)\rceil, |\nu| + i + \lceil m(k(i)-3/2)\rceil\Big)
\end{align*}
is either $\frac{m - \big||\mu| - |\nu|\big|}{m+1}$ or $\frac{m + 1 - \big||\mu| -
|\nu|\big|}{m+1}$ for each $i \le m$. In particular
\begin{equation}\label{eq:K-estimate}
	0 < 1 - K_{m,i} < \frac{\big||\mu| - |\nu|\big|}{m+1}
\end{equation}
for all $m$ and all $i \le m$.

So for large $m$, we have
\begin{align*}
q_m\big(\Lambda_{m}^{2m}(P_m(t_\mu t^*_\nu))\big)
	&{}+ \Lambda^{\lceil\frac{5m}{2}\rceil}_{\lceil\frac{3m}{2}\rceil}
		(Q_m(t_\mu t^*_\nu))\\
	&=\tilde{\iota}_{m}(s_\mu)
		\Big(\sum_{\alpha \in r(\mu)E^{<m}}
		K_{m,|\alpha|} \tilde{\iota}_{m}(s_{\alpha}) P_{r(\alpha)} \tilde{\iota}_{m}(s^*_{\alpha})\Big)\tilde{\iota}_{m}(s^*_\nu)\\
	&= \tilde{\iota}_{m}(s_\mu)
		\Big(\sum_{\alpha \in r(\mu)E^{<m}} K_{m,|\alpha|} S_{i_{m}(\alpha)} S^*_{i_{m}(\alpha)}\Big)\tilde{\iota}_{m}(s^*_\nu),
\end{align*}
where we have once again used Lemma~\ref{lem:equiv-exprs} in the last line. For $0 <
|\alpha| < m$, we have $s_{E(m)}^{-1}(\alpha) = \{i_{m}(\alpha)\}$, and so
$S_{i_{m}(\alpha)} S^*_{i_{m}(\alpha)} = P_\alpha$. So the final line of the preceding
display becomes
\[
\tilde{\iota}_{m}(s_\mu)
		\Big(K_{m,0} S_{r(\mu)} S^*_{r(\mu)} + \sum_{\alpha \in r(\mu)E^{<m} \setminus \{r(\mu)\}} K_{m,|\alpha|} P_\alpha\Big)\tilde{\iota}_{m}(s^*_\nu).
\]
Since $\tilde{\iota}(P_{r(\mu)}) = \sum_{\alpha \in r(\mu)E^{<m}} P_\alpha$, the
estimate~\eqref{eq:K-estimate} shows that
\[
\Big\|\Big(K_{m,0} P_{r(\mu)}
	+ \sum_{\alpha \in r(\mu)E^{<m} \setminus \{r(\mu)\}} K_{m,|\alpha|}\Big)
	- \tilde{\iota}(P_{r(\mu)})\Big\|
	\le \frac{\big||\mu| - |\nu|\big|}{m+1},
\]
and so
\[
\big\|q_m\big(\Lambda_{m}^{2m}(P_m(t_\mu t^*_\nu)) +
	\Lambda^{\lceil\frac{5m}{2}\rceil}_{\lceil\frac{3m}{2}\rceil}
		(Q_m(t_\mu t^*_\nu))\big) - \tilde{\iota}_{m}(s_\mu s^*_\nu))\big\|
	\le \frac{\big||\mu| - |\nu|\big|}{m+1}
\]
as well.
\end{proof}

\begin{dfn}
Suppose that $(\beta_m)^\infty_{m=1}$ is a sequence of $C^{*}$-homomorphisms $\beta_m : A
\to B_m$, and let $\mathcal{C}$ be a class of $C^*$-algebras. We say that a sequence
$(F_m, \psi_m, \varphi_m)$ is an \emph{asymptotic order-$n$ factorization of the sequence
$(\beta_m)$ through elements of $\mathcal{C}$} if each $F_m$ is a direct sum $F_m =
\bigoplus^n_{i=0} F_m^{(i)}$ of $C^*$-algebras $F_m^{(i)} \in \mathcal{C}$, each $\psi_m$
is a completely positive contraction from $A$ to $F_m$, each $\varphi_m$ is a map from
$F_m$ to $B_m$ such that $\varphi_m|_{F_m^{(i)}}$ is an order-zero completely positive
contraction for each $i \le n$, and $\|\varphi_m \circ \psi_m(a) - \beta_m(a)\| \to 0$
for each $a \in A$. We say that $(F_m, \psi_m, \varphi_m)$ is an asymptotic order-$n$
factorization of a fixed $C^*$-homomorphism $\beta : A \to B$ if it is an asymptotic
order-$n$ factorization of the constant sequence $(\beta, \beta, \beta, \dots)$.
\end{dfn}

\begin{lem}\label{lem:order-zero-AF-FD}
For each $m \in \NN$, let $\beta_m : A \to B_m$ be a homomorphism of separable $C^*$-algebras. If the sequence
$(\beta_m)$ has an asymptotic order-$n$ factorization through AF algebras, then it has an
asymptotic order-$n$ factorization through finite-dimensional $C^*$-algebras.
\end{lem}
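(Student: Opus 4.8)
The plan is to approximate each AF algebra in the given factorization by a finite-dimensional subalgebra, exploiting the fact that we only need the factorization to be \emph{asymptotic}: errors that vanish as $m \to \infty$ are harmless, so a diagonal-sequence argument should do the job. Write the hypothesised factorization as $(G_m, \psi_m, \varphi_m)$, where $G_m = \bigoplus_{i=0}^n G_m^{(i)}$ with each $G_m^{(i)}$ an AF algebra, $\psi_m : A \to G_m$ a completely positive contraction, $\varphi_m : G_m \to B_m$ a map whose restriction to each $G_m^{(i)}$ is an order-zero completely positive contraction, and $\|\varphi_m\circ\psi_m(a) - \beta_m(a)\| \to 0$ for every $a \in A$. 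Since $A$ is separable, fix a dense sequence $(a_k)_{k\ge 1}$ in $A$; it suffices to control the approximation on these elements.

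First I would fix $m$ and, for a tolerance $\varepsilon > 0$, produce a finite-dimensional replacement for $G_m$. Each AF summand $G_m^{(i)} = \overline{\bigcup_r F_m^{(i,r)}}$ is an increasing union of finite-dimensional subalgebras. Choose $r$ large enough (depending on $m$ and $\varepsilon$) that every element of the finite set $\psi_m(\{a_1,\dots,a_m\})$ is within $\varepsilon$ of the subalgebra $F_m := \bigoplus_{i=0}^n F_m^{(i,r)} \subseteq G_m$; this is possible because $F_m$ is a $C^*$-subalgebra and unions of finite-dimensional subalgebras are dense in the AF algebra. There is a conditional expectation or, more elementarily, a completely positive contractive projection-type map is not needed: instead compose $\psi_m$ with a completely positive contraction $G_m \to F_m$. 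The cleanest route is to use that a finite-dimensional $C^*$-subalgebra of a $C^*$-algebra is the image of a completely positive contraction (for instance, since each matrix-algebra summand $M_{k}$ sitting inside $G_m^{(i)}$ admits a ucp ``compression'' via a system of matrix units, or one can invoke the standard fact that finite-dimensional subalgebras are the range of a conditional expectation when the ambient algebra has a suitable trace — but to avoid hypotheses, use the matrix-unit description). Call this map $E_m : G_m \to F_m$; it can be arranged to restrict to a completely positive contraction on each summand. Then set $\psi_m' := E_m \circ \psi_m : A \to F_m$, a completely positive contraction, and let $\varphi_m' := \varphi_m|_{F_m} : F_m \to B_m$, which still restricts to an order-zero completely positive contraction on each $F_m^{(i,r)}$ because order-zero completely positive contractions restrict to order-zero completely positive contractions on subalgebras.

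Next I would estimate the new error. For $k \le m$,
\[
\|\varphi_m'\circ\psi_m'(a_k) - \beta_m(a_k)\|
\le \|\varphi_m(E_m(\psi_m(a_k))) - \varphi_m(\psi_m(a_k))\| + \|\varphi_m\circ\psi_m(a_k) - \beta_m(a_k)\|
\le \varepsilon + \|\varphi_m\circ\psi_m(a_k) - \beta_m(a_k)\|,
\]
using that $\varphi_m$ is contractive (being a sum of $n+1$ completely positive contractions it has norm at most $n+1$, so in fact the first term is bounded by $(n+1)\varepsilon$; choosing $\varepsilon = \varepsilon_m \to 0$ absorbs the constant). The second term tends to $0$ as $m \to \infty$ by hypothesis. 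Taking $\varepsilon_m = 1/m$, we obtain for each fixed $k$ that $\|\varphi_m'\circ\psi_m'(a_k) - \beta_m(a_k)\| \to 0$ as $m\to\infty$, and since $(a_k)$ is dense and all maps in sight are uniformly bounded in norm, this extends to $\|\varphi_m'\circ\psi_m'(a) - \beta_m(a)\| \to 0$ for every $a \in A$. Thus $(F_m, \psi_m', \varphi_m')$ is the desired asymptotic order-$n$ factorization through finite-dimensional $C^*$-algebras.

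The only genuine subtlety — the step I would expect to need the most care — is producing the completely positive contraction $E_m : G_m \to F_m$ restricting to a completely positive contraction on each summand, without extra hypotheses on $G_m$. The safe approach is to build $E_m$ summand by summand: for an AF algebra $G = \overline{\bigcup F_r}$ and a finite-dimensional subalgebra $F_r = \bigoplus_j M_{k_j}$, pick for each $j$ a minimal projection $p_j \in M_{k_j}$ and matrix units $(e^{(j)}_{st})$, and define $E(x) = \sum_j \sum_{s,t} (\text{normalized coefficients of } p_j x p_j \text{-type expressions})$; more cleanly, each finite-dimensional subalgebra of a $C^*$-algebra $G$ is the range of a completely positive contractive idempotent, because one may first pass to $\tilde G$ unital, use that $F_r$ is injective as a $C^*$-algebra (finite-dimensional $C^*$-algebras are injective objects in the category of operator systems with completely positive maps) to extend the identity map $F_r \to F_r \subseteq \tilde G$ to a completely positive contraction $\tilde G \to F_r$, then restrict to $G$. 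Injectivity of finite-dimensional $C^*$-algebras is standard (Arveson's extension theorem applied to each matrix-algebra summand), so this causes no trouble, and it respects the direct-sum decomposition since one does it on each summand separately.
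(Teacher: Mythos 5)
Your argument is correct and is essentially the proof given in the paper: both choose a dense sequence $(a_j)$ in $A$, pick finite-dimensional subalgebras of each AF summand that nearly contain $\psi_m(a_j)$ for $j\le m$, compress onto them via a completely positive contraction fixing the subalgebra pointwise (the injectivity/Arveson fact you discuss at length is exactly what the paper invokes in one line), and absorb the resulting $O(\varepsilon_m)$ error asymptotically. The only cosmetic difference is that the paper first passes to a subsequence in $m$ to quantify the original approximation, whereas you run the triangle-inequality estimate directly on the full sequence; both work.
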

\begin{proof}
Choose an asymptotic order-$n$ factorization $(F_m, \psi_m, \varphi_m)$ of $(\beta_m)$
through AF algebras. Choose a dense sequence $(a_j)$ in $A$. By passing to a subsequence
in $m$, we may assume that $\|\varphi_{m} \circ \psi_{m}(a_j) - \beta_{m}(a_j)\| < 1/2m$
whenever $j \le m$. For each $m$, choose finite dimensional $\widetilde{F}_m^{(i)}
\subseteq F_m^{(i)}$ with $d\big(\psi_m(a_j), \bigoplus_i \widetilde{F}_m^{(i)}\big) <
1/2m$ for all $j \le m$. Since $\widetilde{F}_m := \bigoplus_i \widetilde{F}_m^{(i)}$ is
finite dimensional there is a completely positive contraction $\gamma_m : F_{m} \to
\widetilde{F}_m$ fixing $\widetilde{F}_m$ pointwise. So for $j \le m$, we have
$\|\gamma_m(\psi_m(a_j)) - \psi_m(a_j)\| < 1/2m$, and hence
\begin{align*}
\|\varphi_{m} \circ \gamma_m \circ \psi_{m}(a_j) &{}- \beta_m(a_j)\| \\
	&\leq \|\varphi_{m}(\gamma_m(\psi_{m}(a_j)) - \psi_m(a_j)) \|
	   + \|\varphi_{m}(\psi_m(a_j)) - \beta_m(a_j)\|
	< 1/m.
\end{align*}
Now each $\widetilde{\psi}_m := \gamma \circ \psi_{m} : A \to \widetilde{F}_m$ is a
completely positive contraction, each $\varphi_m$ restricts to a completely positive
order-zero contraction from $\widetilde{F}_m^{(i)}$ to $B_m$, and
$\|\widetilde{\varphi}_m \circ \widetilde{\psi}_m(a) - \beta_m(a)\| \to 0$ for all $a \in
A$ because the $a_i$ are dense.
\end{proof}

\begin{cor}\label{cor:order-zero-approximation}
Let $E$ be a row-finite directed graph with no sinks. For each $m$, let $\tilde{\iota}_{m} : C^*(E) \to
C^*(E(m))$ be the homomorphism induced by the homomorphism~\eqref{eq:JRinclusion}. Then
the sequence $(\tilde{\iota}_m)$ has an asymptotic order-$1$ factorization through
finite-dimensional $C^*$-algebras.
\end{cor}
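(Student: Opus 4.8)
The plan is to splice together the ingredients established above into a single asymptotic order-$1$ factorization of the sequence $(\tilde\iota_m)$ through AF algebras, and then to apply Lemma~\ref{lem:order-zero-AF-FD} to replace the AF algebras by finite-dimensional ones. For each $m$ I would take
\[
F_m^{(0)} := \bigoplus_{v\in E^0}\Kk_{E^{[m,2m)}v},\qquad
F_m^{(1)} := \bigoplus_{v\in E^0}\Kk_{E^{[\lceil 3m/2\rceil,\lceil 5m/2\rceil)}v},\qquad
F_m := F_m^{(0)}\oplus F_m^{(1)},
\]
so that each $F_m^{(i)}$ is a $c_0$-direct sum of copies of $\Kk$ and hence AF. After identifying $F_m^{(0)}$ and $F_m^{(1)}$ with the corresponding closed spans of elements $t_\mu\Delta_{r(\mu)}t_\nu^*$ via Lemma~\ref{lem:compacts}, the formulas of Lemma~\ref{lem:PmQm} show that $P_m$ has range in $F_m^{(0)}$ and $Q_m$ has range in $F_m^{(1)}$ (using $m+\lceil m/2\rceil=\lceil 3m/2\rceil$ and $2m+\lceil m/2\rceil=\lceil 5m/2\rceil$), so that $(P_m,Q_m)\colon\Tt C^*(E)\to F_m$ is a completely positive contraction. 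I would define $\varphi_m\colon F_m\to C^*(E(m))$ by $\varphi_m|_{F_m^{(0)}}=q_m\circ\Lambda_m^{2m}$ and $\varphi_m|_{F_m^{(1)}}=q_m\circ\Lambda^{\lceil 5m/2\rceil}_{\lceil 3m/2\rceil}$ (legitimate, via Lemma~\ref{lem:Lambda homomorphism}, because $\lceil 5m/2\rceil-\lceil 3m/2\rceil=m$); each restriction is a $*$-homomorphism, hence an order-zero completely positive contraction.

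The only real obstacle is the construction of a completely positive contraction $\psi_m\colon C^*(E)\to F_m$: the maps $P_m$ and $Q_m$ are defined on $\Tt C^*(E)$ rather than on $C^*(E)$, and $(P_m,Q_m)$ does not descend through the quotient map $q\colon\Tt C^*(E)\to C^*(E)$, since a brief computation with the Toeplitz--Cuntz--Krieger relations gives $P_m(t_\alpha\Delta_w t_\beta^*)=\kappa_m(|\alpha|-m,|\beta|-m)\,t_\alpha\Delta_w t_\beta^*$ (and similarly for $Q_m$), which is generally nonzero on $\ker q$. To circumvent this I would invoke the Choi--Effros lifting theorem---valid because $C^*(E)$ is separable and nuclear---to fix a completely positive contractive section $\sigma\colon C^*(E)\to\Tt C^*(E)$ of $q$, and set $\psi_m:=(P_m,Q_m)\circ\sigma$, which is a completely positive contraction.

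It then remains to check that $\varphi_m\circ\psi_m(a)\to\tilde\iota_m(a)$ for every $a\in C^*(E)$. As $\lsp\{s_\mu s_\nu^*:\mu,\nu\in E^*,\ r(\mu)=r(\nu)\}$ is dense in $C^*(E)$ and the sequences $(\varphi_m\circ\psi_m)$ and $(\tilde\iota_m)$ are uniformly bounded, it is enough to treat $a=s_\mu s_\nu^*$ with $r(\mu)=r(\nu)$. Writing $\sigma(s_\mu s_\nu^*)=t_\mu t_\nu^*+k_{\mu,\nu}$ with $k_{\mu,\nu}$ a fixed element of $\ker q=\clsp\{t_\alpha\Delta_w t_\beta^*:w\in E^0,\ \alpha,\beta\in E^*w\}$ (here one uses that $E$ has no sinks), one computes
\[
\varphi_m\circ\psi_m(s_\mu s_\nu^*)=q_m\bigl(\Lambda_m^{2m}(P_m(t_\mu t_\nu^*))+\Lambda^{\lceil 5m/2\rceil}_{\lceil 3m/2\rceil}(Q_m(t_\mu t_\nu^*))\bigr)+\varphi_m\bigl((P_m,Q_m)(k_{\mu,\nu})\bigr).
\]
By Proposition~\ref{prp:approximates} the first term converges to $q_m(\iota_m(t_\mu t_\nu^*))=\tilde\iota_m(s_\mu s_\nu^*)$. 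For the second term I would approximate the fixed element $k_{\mu,\nu}$ in norm by finite linear combinations of the $t_\alpha\Delta_w t_\beta^*$ and use that $P_m(t_\alpha\Delta_w t_\beta^*)=Q_m(t_\alpha\Delta_w t_\beta^*)=0$ as soon as $m>\max\{|\alpha|,|\beta|\}$; together with the uniform boundedness of the maps $\varphi_m\circ(P_m,Q_m)$ this yields $\|\varphi_m((P_m,Q_m)(k_{\mu,\nu}))\|\to 0$. This establishes the required convergence, so $(F_m,\psi_m,\varphi_m)$ is an asymptotic order-$1$ factorization of $(\tilde\iota_m)$ through AF algebras, and Lemma~\ref{lem:order-zero-AF-FD} upgrades it to one through finite-dimensional $C^*$-algebras. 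Apart from the $\Tt C^*(E)$-versus-$C^*(E)$ point, everything is bookkeeping plus the estimate already contained in Proposition~\ref{prp:approximates}.
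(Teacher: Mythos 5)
Your proposal is correct and follows essentially the same route as the paper's proof: a Choi--Effros splitting $\sigma$, the maps $\psi_m=(P_m\oplus Q_m)\circ\sigma$ and $\varphi_m=q_m\circ(\Lambda_m^{2m}\oplus\Lambda^{\lceil 5m/2\rceil}_{\lceil 3m/2\rceil})$, Proposition~\ref{prp:approximates} for the convergence, and Lemma~\ref{lem:order-zero-AF-FD} to upgrade from AF to finite-dimensional algebras. Your explicit treatment of the kernel term $k_{\mu,\nu}$ (showing that $P_m$ and $Q_m$ annihilate any fixed element of $\ker q$ for large $m$) is a detail the paper compresses into ``an $\varepsilon/3$-argument,'' and you handle it correctly.
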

\begin{proof}
Since $C^{*} (E)$ is nuclear, there is a norm-1 completely positive splitting $\sigma :
C^*(E) \to \Tt C^*(E)$ for the quotient map \cite[Theorem~3.10]{ChoiEffros:Ann76}.
Identify each $\Kk_{E^{<p}}$ with $\clsp\{s_\mu \Delta_{r(\mu)} s^*_\nu : \mu,\nu \in
E^{<p}\}$ as in Lemma~\ref{lem:compacts}. For each $m$, define $\psi_m : C^*(E) \to
\Kk_{E^{[m,2m)}} \oplus \Kk_{E^{[\lceil\frac{3m}{2}\rceil, \lceil\frac{5m}{2}\rceil)}}$
by $\psi_m(a) = P_m(\sigma(a)) \oplus Q_m(\sigma(a))$, and define $\varphi_m :
\Kk_{E^{[m,2m)}} \oplus \Kk_{E^{[\lceil\frac{3m}{2}\rceil, \lceil\frac{5m}{2}\rceil)}}
\to C^*(E(m))$ by $\varphi_m((a,b)) = q_m(\Lambda_{m}^{2m}(a) +
\Lambda^{\lceil\frac{5m}{2}\rceil}_{\lceil\frac{3m}{2}\rceil}(b))$.
Proposition~\ref{prp:approximates} shows that $\|\varphi_m \circ \psi_m(s_\mu s^*_\nu) -
\tilde{\iota}_m(s_\mu s^*_\nu)\| \to 0$ for all $\mu,\nu$, and then an
$\varepsilon/3$-argument proves that $\|\varphi_m \circ \psi_m(a) - \tilde{\iota}_m(a)\|
\to 0$ for each $a \in C^*(E)$. Lemma~\ref{lem:Lambda homomorphism} shows that each
$\varphi_m$ restricts to a homomorphism, and hence a completely positive contraction of
order zero, on each of $\Kk_{E^{[m,2m)}}$ and $\Kk_{E^{[\lceil\frac{3m}{2}\rceil,
\lceil\frac{5m}{2}\rceil)}}$.  The corollary now follows from
Lemma~\ref{lem:order-zero-AF-FD}.
\end{proof}

\section{Reinclusion in $K$-theory}\label{sec:reinclusion}

In this section we describe, for each row-finite graph $E$ with no sinks, an inclusion
\[
C^*(E(m)) \hookrightarrow C^*(E) \otimes \Kk
\]
which induces the multiplication-by-$m$ map on $K$-theory when composed with Rout's
inclusion $\tilde{\iota}_m : C^*(E) \to C^*(E(m))$.

\begin{prp}\label{prp:E(m)->ExK}
Let $E$ be a row-finite directed graph with no sinks. For each $m \in \NN$ there is an
injective homomorphism $j_m : C^*(E(m)) \to C^*(E) \otimes \Kk_{E^{<m}}$ such that
\begin{equation}\label{eq:jsubmdef}
j_m(p_{\mu}) = p_{r(\mu)} \otimes \theta_{\mu,\mu}
	\qquad\text{and}\qquad
j_m(s_{(e,\mu)})
	= \begin{cases}
		s_{e\mu} \otimes \theta_{s(e), \mu} &\text{ if $|\mu| = m-1$}\\
		p_{r(\mu)} \otimes \theta_{e\mu, \mu} &\text{ otherwise.}	
	\end{cases}
\end{equation}
\end{prp}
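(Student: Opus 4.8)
The plan is to write down a Cuntz--Krieger $E(m)$-family inside $C^*(E)\otimes\Kk_{E^{<m}}$ by reading off the right-hand sides of~\eqref{eq:jsubmdef}, use the universal property of $C^*(E(m))$ to obtain $j_m$, and then establish injectivity with the gauge-invariant uniqueness theorem~\cite[Theorem~3.1]{BatesPaskEtAl:NYJM00}. So for $\mu\in E^{<m}=E(m)^0$ set $P_\mu:=p_{r(\mu)}\otimes\theta_{\mu,\mu}$, and for $(e,\mu)\in E(m)^1$ set $S_{(e,\mu)}:=s_{e\mu}\otimes\theta_{s(e),\mu}$ if $|\mu|=m-1$ and $S_{(e,\mu)}:=p_{r(\mu)}\otimes\theta_{e\mu,\mu}$ if $|\mu|<m-1$. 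Since the vertex projections of $C^*(E)$ are nonzero, each $P_\mu$ is a nonzero projection, and the matrix-unit relations for the $\theta$'s show the $P_\mu$ are mutually orthogonal; the nonvanishing is recorded for later. The Cuntz--Krieger relation $S_{(e,\mu)}^*S_{(e,\mu)}=P_{r(e,\mu)}=P_\mu$ is immediate in both cases from $s_{e\mu}^*s_{e\mu}=p_{r(e\mu)}=p_{r(\mu)}$ together with $\theta_{\mu,e\mu}\theta_{e\mu,\mu}=\theta_{\mu,\mu}$ (respectively $\theta_{\mu,s(e)}\theta_{s(e),\mu}=\theta_{\mu,\mu}$).

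For the summation relation at a vertex $\lambda$, first note that $E(m)$ has no sinks: every $v\in E^0$ emits an edge of $E$, hence a path of length $m-1$, and every $\lambda\in E^{<m}$ of positive length is the source of the edge $(\lambda_1,\lambda_2\cdots\lambda_{|\lambda|})$. Thus the relation must hold with equality at every vertex. If $|\lambda|\ge 1$ then $(\lambda_1,\lambda_2\cdots\lambda_{|\lambda|})$ is the only edge of $E(m)$ with source $\lambda$, it falls under the second clause of~\eqref{eq:jsubmdef}, and a matrix-unit computation gives $S_{(\lambda_1,\lambda_2\cdots\lambda_{|\lambda|})}S_{(\lambda_1,\lambda_2\cdots\lambda_{|\lambda|})}^*=p_{r(\lambda)}\otimes\theta_{\lambda,\lambda}=P_\lambda$. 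If $\lambda=v\in E^0$, the edges of $E(m)$ with source $v$ are the $(e,\mu)$ with $e\in vE^1$ and $\mu\in r(e)E^{m-1}$; for these $S_{(e,\mu)}S_{(e,\mu)}^*=s_{e\mu}s_{e\mu}^*\otimes\theta_{v,v}$, and summing and using the Cuntz--Krieger relation of $C^*(E)$ iterated $m$ times (valid since $E$ has no sinks) yields $\sum s_{e\mu}s_{e\mu}^*=\sum_{\nu\in vE^m}s_\nu s_\nu^*=p_v$, so the sum is $p_v\otimes\theta_{v,v}=P_v$. Hence $\{P_\mu,S_{(e,\mu)}\}$ is a Cuntz--Krieger $E(m)$-family, and the universal property of $C^*(E(m))$ yields a homomorphism $j_m$ satisfying~\eqref{eq:jsubmdef}.

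For injectivity it remains, by~\cite[Theorem~3.1]{BatesPaskEtAl:NYJM00} and the nonvanishing of the $P_\mu$, to exhibit a point-norm continuous action $\beta$ of $\TT$ on $C^*(E)\otimes\Kk_{E^{<m}}$ with $\beta_z(P_\mu)=P_\mu$ and $\beta_z(S_{(e,\mu)})=zS_{(e,\mu)}$. The gauge action of $C^*(E)$ by itself will not do this, because the two clauses of~\eqref{eq:jsubmdef} place $S_{(e,\mu)}$ at incompatible degrees in the $C^*(E)$-factor (a length-$m$ path in one case, a vertex projection in the other), so the grading must be tracked in the $\Kk_{E^{<m}}$-factor instead. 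Let $U_z:=\sum_{\mu\in E^{<m}}z^{|\mu|}\theta_{\mu,\mu}\in\Mm(\Kk_{E^{<m}})$, a unitary multiplier with $\Ad(U_z)(\theta_{\mu,\nu})=z^{|\mu|-|\nu|}\theta_{\mu,\nu}$, so that $z\mapsto\Ad(U_z)$ is a point-norm continuous $\TT$-action on $\Kk_{E^{<m}}$ (check on finite-rank operators and approximate). Put $\beta_z:=\gamma^E_z\otimes\Ad(U_z)$, with $\gamma^E$ the gauge action of $C^*(E)$. Verifying $\beta_z\circ j_m=j_m\circ\gamma^{E(m)}_z$ on generators is then a short computation using $|e\mu|-|\mu|=1$ in the first clause and $|e\mu|=m$, $|s(e)|-|\mu|=1-m$ in the second. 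This gives the required equivariance, and the gauge-invariant uniqueness theorem completes the proof.

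The only genuine obstacle is the injectivity step: one has to see that the gauge grading of the target must be carried partly by $C^*(E)$ and partly by the compacts, and produce the compensating action $\Ad(U_z)$; once that is in place, verifying the Cuntz--Krieger relations and the equivariance is routine bookkeeping with matrix units and the source/range conventions for $E(m)$.
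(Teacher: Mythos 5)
Your proposal is correct and follows essentially the same route as the paper: verify (CK1) and (CK2) for the displayed family (splitting (CK2) into the cases $|\lambda|\ge 1$ and $\lambda\in E^0$, the latter using that $E$ has no sinks), invoke the universal property, and then apply the gauge-invariant uniqueness theorem with the action $\gamma^E_z\otimes\beta_z$ where $\beta_z(\theta_{\mu,\nu})=z^{|\mu|-|\nu|}\theta_{\mu,\nu}$ --- your $\Ad(U_z)$ is exactly the paper's $\beta$, just realized by an explicit unitary multiplier. (Minor slip: in your final equivariance remark the degree counts $|e\mu|-|\mu|=1$ and $|s(e)|-|\mu|=1-m$ are attached to the wrong clauses of~\eqref{eq:jsubmdef}, but the computation itself is right.)
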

\begin{proof}
We check the Cuntz-Krieger relations. For~(CK1), we calculate:
\[
j_m(s_{(e,\mu)})^*j_m(s_{(e,\mu)})
	= \left.\begin{cases}
		s_{e\mu}^* s_{e\mu} \otimes \theta_{\mu, \mu}
			&\text{ if $|\mu| = m-1$}\\
		p_{r(\mu)} \otimes \theta_{\mu, \mu}
			&\text{ otherwise}
	\end{cases}\right\}
	= j_m(p_\mu) = j_m(p_{r(e,\mu)}).
\]
For~(CK2), we fix $\mu \in E^{<m} = E(m)^0$, and consider two cases. First suppose that
$\mu = e \mu'$ for some $e \in E^1$ and $\mu' \in E^*$. Then $\mu E(m)^1= \{(e, \mu')\}$
and $|\mu'| < m-1$, so we have
\[
\sum_{(f,\nu) \in \mu E(m)^1} j_m(s_{f,\nu}) j_m(s_{f,\nu})^*
	= j_m(s_{(e, \mu')}) j_m(s_{(e,\mu')})^*
	= p_{r(\mu')} \otimes \theta_{e\mu', e\mu'}
	= j_m(p_{\mu}).
\]
Now suppose that $|\mu| = 0$ so that $\mu = v \in E^0$. We have $vE(m)^1 = \{(e,\nu) :
s(e) = v\text{ and } \nu \in vE^{m-1}\}$. Since $E$ has no sinks, each $p_v =
\sum_{\lambda \in vE^m} s_\lambda s^*_\lambda$ in $C^*(E)$, and so
\begin{align*}
\sum_{(f,\nu) \in v E(m)^1} j_m(s_{f,\nu}) j_m(s_{f,\nu})^*
	&= \sum_{f\nu \in vE^m} s_{f\nu} s_{f\nu}^* \otimes \theta_{s(f), s(f)}\\
	&= \Big(\sum_{f\nu \in vE^m} s_{f\nu} s_{f\nu}^*\Big) \otimes \theta_{v, v}
	= p_{v} \otimes \theta_{v,v}
	= j_m(p_v).
\end{align*}
So the $j_m(p_\mu)$ and the $j_m(s_e)$ form a Cuntz-Krieger $E(m)$-family in $C^*(E)
\otimes \Kk_{E^{<m}}$ and the universal property of $C^*(E(m))$ ensures that $j_m$
extends to a homomorphism of $C^*$-algebras. The $j_m(p_\mu)$ are clearly all nonzero.
Let $\gamma$ denote the gauge action on $C^*(E)$, and let $\beta$ denote the action of
$\TT$ on $\Kk_{E^{<m}}$ determined by $\beta_z(\theta_{\mu,\nu}) = z^{|\mu| - |\nu|}
\theta_{\mu,\nu}$. It is routine to check that $(\gamma \otimes \beta)_z(j_m(p_\mu)) =
j_m(p_\mu)$ for all $\mu$ and that $(\gamma \otimes \beta)_z(j_m(s_{(e,\mu)})) =
zj_m(s_{(e,\mu)})$ for all $(e,\mu)$. So the gauge-invariant uniqueness theorem
\cite[Theorem~2.1]{BatesPaskEtAl:NYJM00} (see also
\cite[Theorem~2.3]{HuefRaeburn:ETDS97}) implies that $j_m$ is injective.
\end{proof}

Recall that there is an inclusion $\tilde{\iota}_m : C^*(E) \to C^*(E(m))$ satisfying the
formula~\eqref{eq:JRinclusion}, and so we have an inclusion $j_m \circ \tilde{\iota}_m :
C^*(E) \to C^*(E) \otimes \Kk_{E^{<m}}$.

\begin{lem}\label{lem:times m}
Let $E$ be a row-finite directed graph with no sinks. Identify $K_*(C^*(E) \otimes
\Kk_{E^{<m}} )$ with $K_*(C^*(E))$. Then the induced map $(j_m \circ \tilde{\iota}_m)_* :
K_*(C^*(E)) \to K_*(C^*(E))$ is multiplication by $m$.
\end{lem}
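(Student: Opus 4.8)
I want to show that $(j_m \circ \tilde\iota_m)_*$ is multiplication by $m$ on $K_*(C^*(E))$. The cleanest route is to compute the composite homomorphism $j_m \circ \tilde\iota_m : C^*(E) \to C^*(E) \otimes \Kk_{E^{<m}}$ explicitly on generators, and then recognize it, up to a conjugation by a partial isometry (which is invisible in $K$-theory), as an amplification of $m$ copies of a homomorphism that is $KK$-equivalent to the identity. First I would evaluate $j_m(\tilde\iota_m(p_v))$ and $j_m(\tilde\iota_m(s_e))$ using the formulas~\eqref{eq:JRinclusion} and~\eqref{eq:jsubmdef}. From~\eqref{eq:JRinclusion}, $\tilde\iota_m(p_v) = \sum_{\mu \in vE^{<m}} p^m_\mu$, and applying~\eqref{eq:jsubmdef} gives $j_m(\tilde\iota_m(p_v)) = \sum_{\mu \in vE^{<m}} p_{r(\mu)} \otimes \theta_{\mu,\mu}$; a parallel (slightly more involved) computation handles $s_e$, where the two cases in the definition of $s_{(e,\mu)}$ and the definition of $i_m$ conspire so that $j_m(\tilde\iota_m(s_e))$ is a sum of terms $s_{e'} \otimes \theta_{\cdot,\cdot}$ running over the paths of length $<m$ out of $r(e)$.

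\textbf{Key steps.} Once the composite is computed on generators, I expect it to have the form of a homomorphism $C^*(E) \to C^*(E) \otimes \Kk_{E^{<m}}$ that, after cutting down by a suitable projection $P = \sum_v \sum_{\mu \in vE^{<m}} p_{r(\mu)} \otimes \theta_{\mu,\mu}$ (which is exactly $j_m(\tilde\iota_m(1))$ in the multiplier algebra sense), decomposes as a direct sum indexed by $E^{<m}$ of corner embeddings $C^*(E) \to p_v C^*(E) p_v \otimes \Kk$-type maps. The point is that for each $\mu \in E^{<m}$ the corresponding ``block'' of $j_m \circ \tilde\iota_m$ is conjugate, via the partial isometries $s_\mu$ of $C^*(E)$, to the canonical corner inclusion $a \mapsto a \otimes \theta_{\mu,\mu}$ (for $\mu$ a vertex this is literally the corner inclusion; for longer $\mu$ one uses that $s_\mu s_\mu^*$ is a subprojection of $p_{s(\mu)}$ and that $a \mapsto s_\mu a s_\mu^*$ is $KK$-equivalent to $a \mapsto a$ because $s_\mu^* s_\mu = p_{r(\mu)}$, i.e.\ it's a hereditary-subalgebra inclusion composed with an isomorphism onto it induced by a partial isometry). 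Each such block induces the identity on $K$-theory after the standard identification $K_*(C^*(E) \otimes \Kk) \cong K_*(C^*(E))$, and since $E$ is row-finite with no sinks the index set $E^{<m}$ is used only finitely-locally, so additivity of $K_*$ over the orthogonal direct summands gives that the total map multiplies by the number of summands. Finally I would verify that the relevant counting gives exactly $m$: the subtlety is that $E^{<m}$ is infinite in general, but the map $j_m \circ \tilde\iota_m$, evaluated at a class represented in a corner $p_v C^*(E) p_v$, only sees the paths $\mu \in vE^{<m}$ together with the telescoping identification coming from $\tilde\iota_m$, and one checks that each of the $m$ ``levels'' $|\mu| \in \{0,1,\dots,m-1\}$ contributes exactly one copy of the identity — so the multiplier is $m$, not $|E^{<m}|$.

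\textbf{Main obstacle.} The hard part is organizing the bookkeeping so that the ``direct sum of $m$ identity maps'' structure is genuinely visible and not obscured by the infinite index set $E^{<m}$ and the recursive definition of $i_m$. Concretely, I expect to need a clean statement along the lines of: composing $j_m$ with $\tilde\iota_m$ produces a homomorphism unitarily equivalent (in $\Mm(C^*(E)\otimes\Kk)$) to $\mathrm{diag}(\varrho_0, \varrho_1, \dots, \varrho_{m-1})$ where each $\varrho_i : C^*(E) \to C^*(E) \otimes \Kk$ is a corner embedding induced by the assignment $p_v \mapsto p_v \otimes \theta_{v,v}$ followed by a partial-isometry conjugation indexed by paths of length $i$. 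The verification that each $\varrho_i$ is $KK$-equivalent to a corner inclusion is where the row-finite, no-sinks hypotheses get used (via $p_v = \sum_{\lambda \in vE^m}s_\lambda s_\lambda^*$, exactly as in the proof of Proposition~\ref{prp:E(m)->ExK}), and the additivity argument then finishes the computation. An alternative, if the explicit picture is too unwieldy, is to check the claim directly on the standard generators of $K_0$ and $K_1$ of a graph algebra: $K_0(C^*(E))$ is generated by the classes $[p_v]$ and is the cokernel of $1 - A_E^t$ on $\bigoplus_{E^0}\ZZ$, and one can track $[p_v] \mapsto (j_m\circ\tilde\iota_m)_*[p_v] = \sum_{\mu \in vE^{<m}}[p_{r(\mu)}]$ and show this equals $m[p_v]$ modulo the image of $1 - A_E^t$, using that $\sum_{|\mu|=k+1, \mu \in vE^*}[p_{r(\mu)}] \equiv \sum_{|\mu|=k,\mu\in vE^*}[p_{r(\mu)}]$ for each $k$; the $K_1$ computation then follows by naturality of the six-term sequence associated to the canonical crossed-product (or Pimsner-Voiculescu) description of $C^*(E)$, since $j_m\circ\tilde\iota_m$ is gauge-equivariant.
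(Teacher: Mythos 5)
Your primary plan --- writing $j_m\circ\tilde\iota_m$ as a block-diagonal sum $\mathrm{diag}(\varrho_0,\dots,\varrho_{m-1})$ over the levels $|\mu|\in\{0,\dots,m-1\}$ of $E^{<m}$ and invoking additivity of $K_*$ over orthogonal summands --- fails, because the composite is not block-diagonal over the levels. The vertex projections do land diagonally, $j_m(\tilde\iota_m(p_v))=\sum_{\mu\in vE^{<m}}p_{r(\mu)}\otimes\theta_{\mu,\mu}$, but the edges do not: combining \eqref{eq:JRinclusion} and \eqref{eq:jsubmdef} gives
\[
j_m(\tilde\iota_m(s_e))=\sum_{\mu\in r(e)E^{m-1}}s_{e\mu}\otimes\theta_{s(e),\mu}
 \;+\;\sum_{\mu\in r(e)E^{<m-1}}p_{r(\mu)}\otimes\theta_{e\mu,\mu},
\]
so the matrix units $\theta_{e\mu,\mu}$ carry level $i$ to level $i+1$, and the first sum carries level $m-1$ back to level $0$: the homomorphism cyclically permutes the levels rather than preserving them. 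Concretely, for $E$ a single vertex with one loop we have $C^*(E)=C(\TT)$, and $j_m\circ\tilde\iota_m$ sends the generating unitary $z$ to the companion-type unitary in $M_m(C(\TT))$ with $1$'s on the subdiagonal and $z^m$ in the corner; this has the correct $K_1$-class ($m$ times the generator) but admits no orthogonal decomposition into $m$ corner embeddings each contributing one copy of $z$. The ``direct sum of $m$ identities'' is only visible after a homotopy or $K$-theoretic manipulation, which is precisely the bookkeeping your plan was meant to avoid.

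The alternative you sketch in your final sentences is essentially the paper's actual proof, and it is the right route. The paper uses the Pask--Raeburn description: $K_0(C^*(E)^\gamma)\cong\varinjlim(\ZZ E^0,A^t)$, with $K_1(C^*(E))\cong\ker(1-A^t)$ and $K_0(C^*(E))\cong\coker(1-A^t)$ via the map $\omega(x)=(x,A^tx,(A^t)^2x,\dots)$. Since $j_m\circ\tilde\iota_m(p_v)=\sum_{\mu\in vE^{<m}}p_{r(\mu)}\otimes\theta_{\mu,\mu}$, the induced map on the direct limit is $\sum_{j=0}^{m-1}(A^t)^j$; on the cokernel each $(A^t)^j\delta_v\equiv\delta_v$ modulo the image of $1-A^t$, giving $m\cdot\id$ on $K_0$, and on $\ker(1-A^t)$ one has $A^tx=x$, so the sum is literally $mx$, giving $m\cdot\id$ on $K_1$. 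No appeal to naturality of the Pimsner--Voiculescu sequence is needed; and if you did want to invoke it, note that $j_m\circ\tilde\iota_m$ is equivariant for $\gamma$ against $\gamma\otimes\beta$ with $\beta_z(\theta_{\mu,\nu})=z^{|\mu|-|\nu|}\theta_{\mu,\nu}$, not against $\gamma\otimes\id$, so identifying the two exact sequences would require an extra word. Rewriting your proof around this generator computation, and discarding the block-diagonal picture, recovers the paper's argument.
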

\begin{proof}
We first recall from \cite[page~439]{PaskRaeburn:PRIMS96} (Theorem~4.2.4 and the
discussion immediately preceding it) the computation of the $K$-theory of $C^*(E)$. We
can identify $K_0(C^*(E)^\gamma)$ with $\varinjlim (\ZZ E^0, A^t)$ where $A$ is the $E^0
\times E^0$ matrix $A(v,w) = |vE^1w|$. We then have $K_1(C^*(E)) \cong \ker \phi$ and
$K_0(C^*(E)) \cong \coker \phi$ where $\phi : \varinjlim (\ZZ E^0, A^t) \to \varinjlim
(\ZZ E^0, A^t)$ is the homomorphism induced by $1 - A^t$. Identify $\varinjlim (\ZZ E^0,
A^t)$ with tail-equivalence classes of sequences $(a_j) \in (\ZZ E^0)^\infty$ such that
$a_{j+1} = A^t a_j$ for large $j$, and define $\omega : \ZZ E^\infty \to \varinjlim (\ZZ
E^0, A^t)$ by $\omega(x) = (x, A^t x, (A^t)^2 x, \dots)$. Then Theorem~4.2.4 of
\cite{PaskRaeburn:PRIMS96} says that $\omega$ induces isomorphisms of $\ker(1 - A^t)
\cong \ker(\phi)$ and $\coker(1 - A^t) \cong \coker\phi$.

It is routine to check that $j_m \circ \tilde{\iota}_m(p_v)$ carries $C^*(E)^\gamma$ into
$C^*(E)^\gamma \otimes \Kk_{E^{<m}}$. We consider the effect of this map on
$K_0(C^*(E)^\gamma)$: fix $v \in E^0$ and calculate
\begin{align*}
j_m \circ \tilde{\iota}_m(p_v)
	&= \sum_{\mu \in vE^{<m}} j_m(p_\mu)
	= \sum_{\mu \in vE^{<m}} p_{r(\mu)} \otimes \theta_{\mu,\mu};
\end{align*}
so composing with the isomorphism $K_0(C^*(E)^\gamma) \cong K_0(C^*(E)^\gamma \otimes
\Kk_{E^{<m}} )$, the induced map $(j_m \circ \tilde{\iota}_m)_* : K_0(C^*(E)^\gamma) \to
K_0(C^*(E)^\gamma)$ is given by
\[
[p_v]_0
	\mapsto \sum_{\mu \in vE^{<m}} [p_{r(\mu)}]_0
	= \sum^{m-1}_{j=0} \sum_{w \in E^0} A^t(w,v) [p_w]_0
	= \sum^{m-1}_{j=0} (A^t)^j \cdot [p_v]_0.
\]
Suppose that $x \in \ker(1 - A^t)$. Then $\omega(x) = (x,x,x,\dots)$ and $A^t x = x$, and
so we have
\[
(j_m \circ \tilde{\iota}_m)_*(\omega(x))
	= \sum^{m-1}_{j=0} ((A^t)^jx, (A^t)^jx, \dots)
	= m \cdot (x, x, \dots)
	= m \cdot \omega(x),
\]
and we deduce that $(j_m \circ \tilde{\iota}_m)_*$ is multiplication by $m$ on
$K_1(C^*(E))$. To calculate the induced map on $K_0(C^*(E))$, observe that for each
$(a_i) \in \varinjlim(\ZZ E^0, A^t)$, we have $(a_i - A^ta_i)_{i=0}^\infty \in
\image\phi$, and so $(a_i) + \image\phi = (A^t a_i) + \image\phi$. So for $v \in E^0$,
\[
(j_m \circ \tilde{\iota}_m)_*(\omega(\delta_v)) + \image\phi
	= \sum^{m-1}_{j=0} ((A^t)^j\delta_v, (A^t)^{j+1}\delta_v, \dots) + \image\phi
	= m \cdot (\delta_v, A^t\delta_v, \dots)
	= m \cdot \omega(\delta_v),
\]
and we deduce that $(j_m \circ \tilde{\iota}_m)_*$ is multiplication by $m$ on
$K_1(C^*(E))$.
\end{proof}

It will be important for our main result that this inclusion restricts to an inclusion
map of the same form on gauge-invariant ideals, and induces an inclusion map of the same
form on the corresponding quotients.

\begin{lem}\label{lem:respects ideals}
Let $E$ be a row-finite directed graph with no sinks. If $I$ is a gauge-invariant ideal
of $C^*(E)$, then $(j_m \circ \tilde{\iota}_m)_*$ restricts to the multiplication-by-$m$
map $K_*(I) \to K_*(I)$ and induces the multiplication-by-$m$ map $K_*(C^*(E)/I) \to
K_*(C^*(E)/I)$.  Moreover, if $J$ is a gauge-invariant ideal of $C^*(E)$ with $J
\subseteq I$, then the induced map $K_* ( I / J ) \to K_* ( I / J )$ is multiplication by
$m$.
\end{lem}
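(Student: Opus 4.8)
The plan is to reduce the statement to Lemma~\ref{lem:times m} applied to the smaller graphs that model $I$, $C^*(E)/I$ and the subquotient. Recall from the ideal structure theory of graph $C^*$-algebras \cite{BatesPaskEtAl:NYJM00} that, since $E$ is row-finite, every gauge-invariant ideal of $C^*(E)$ has the form $I_H = \clsp\{s_\alpha s^*_\beta : r(\alpha) = r(\beta) \in H\}$ for a unique saturated hereditary subset $H \subseteq E^0$, and that there are isomorphisms $I_H \cong C^*(E_H)$ and $C^*(E)/I_H \cong C^*(E/H)$ carrying vertex projections to vertex projections and edge partial isometries to edge partial isometries; here $E_H$ is the subgraph of $E$ on the vertex set $H$ with edge set $\{e \in E^1 : s(e) \in H\}$, and $E/H$ is the graph with vertex set $E^0 \setminus H$ and edge set $\{e \in E^1 : r(e) \notin H\}$. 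Since $E$ has no sinks and $H$ is saturated and hereditary, both $E_H$ and $E/H$ are row-finite with no sinks. Finally, if $H_1 \subseteq H_2$ are saturated hereditary subsets of $E^0$, then under $I_{H_2} \cong C^*(E_{H_2})$ the ideal $I_{H_1}$ corresponds to the gauge-invariant ideal of $C^*(E_{H_2})$ associated to the saturated hereditary subset $H_1$ of $E_{H_2}^0 = H_2$, so $I_{H_2}/I_{H_1} \cong C^*(E_{H_2}/H_1)$; hence the last assertion of the lemma will follow by applying the quotient case of the first two assertions to $C^*(E_{H_2})$.

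Now fix a saturated hereditary $H \subseteq E^0$. Since $r(\mu) \in H$ whenever $s(\mu) \in H$, the formulas~\eqref{eq:JRinclusion} and~\eqref{eq:jsubmdef} show at once that $\tilde{\iota}_m$ carries $I_H$ into the ideal of $C^*(E(m))$ generated by the vertex projections $\{p_\mu : \mu \in E^{<m},\ s(\mu) \in H\}$, that $j_m$ carries the ideal of $C^*(E(m))$ generated by $\{p_\mu : \mu \in E^{<m},\ r(\mu) \in H\}$ into $I_H \otimes \Kk_{E^{<m}}$, and hence that $j_m \circ \tilde{\iota}_m$ carries $I_H$ into $I_H \otimes \Kk_{E^{<m}}$, and carries $J := I_{H'}$ into $J \otimes \Kk_{E^{<m}}$ for every saturated hereditary $H' \subseteq H$. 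Consequently $j_m \circ \tilde{\iota}_m$ restricts to a homomorphism $I_H \to I_H \otimes \Kk_{E^{<m}}$, descends to a homomorphism $C^*(E)/I_H \to (C^*(E)/I_H) \otimes \Kk_{E^{<m}}$, and also descends to a homomorphism $I_H/J \to (I_H/J) \otimes \Kk_{E^{<m}}$.

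The heart of the argument is a bookkeeping step identifying these maps with the corresponding maps for $E_H$ and $E/H$. Working directly from the definitions of $E(m)$ and of $i_m$, one checks that the hereditary subgraph of $E(m)$ on the vertex set $E_H^{<m} = E_H(m)^0$ is canonically isomorphic to $E_H(m)$, that its saturation inside $E(m)^0 = E^{<m}$ is $\{\mu : r(\mu) \in H\}$, and that the quotient graph $E(m)/\{\mu : r(\mu) \in H\}$ is canonically isomorphic to $(E/H)(m)$. Feeding these identifications, together with the isomorphisms $I_H \cong C^*(E_H)$ and $C^*(E)/I_H \cong C^*(E/H)$, into~\eqref{eq:JRinclusion} and~\eqref{eq:jsubmdef}, one obtains that the restriction of $j_m \circ \tilde{\iota}_m$ to $I_H$ is $j^{E_H}_m \circ \tilde{\iota}^{E_H}_m$ followed by the corner embedding $C^*(E_H) \otimes \Kk_{E_H^{<m}} \hookrightarrow C^*(E_H) \otimes \Kk_{E^{<m}}$, and that the descent of $j_m \circ \tilde{\iota}_m$ to $C^*(E)/I_H$ is $j^{E/H}_m \circ \tilde{\iota}^{E/H}_m$ followed by the corner embedding $C^*(E/H) \otimes \Kk_{(E/H)^{<m}} \hookrightarrow C^*(E/H) \otimes \Kk_{E^{<m}}$. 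For $S \subseteq T$ the corner embedding $\Kk_S \hookrightarrow \Kk_T$ induces the standard identification on $K$-theory, so these corner embeddings induce the identity after identifying $K_*(A \otimes \Kk_S)$ with $K_*(A)$. Since $E_H$ and $E/H$ are row-finite with no sinks, Lemma~\ref{lem:times m} now gives that $(j_m \circ \tilde{\iota}_m)_*$ is multiplication by $m$ both on $K_*(I_H) \cong K_*(C^*(E_H))$ and on $K_*(C^*(E)/I_H) \cong K_*(C^*(E/H))$, which is the first two assertions. Applying the quotient case to $C^*(E_{H_2})$ and its gauge-invariant ideal $I_{H_1}$ then yields the final assertion, since the restriction of $j_m \circ \tilde{\iota}_m$ to $I_{H_2}$ is, up to the corner embedding, $j^{E_{H_2}}_m \circ \tilde{\iota}^{E_{H_2}}_m$.

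I expect the main obstacle to be the identifications in the third paragraph: realising $E_H(m)$ and $(E/H)(m)$ respectively as the hereditary subgraph of $E(m)$ on $E_H^{<m}$ and as the corresponding quotient graph, and checking that $\tilde{\iota}_m$ and $j_m$ intertwine, up to the corner embeddings of compacts, with the analogous maps for $E_H$ and $E/H$. This is conceptually routine but requires patient tracking of path lengths modulo $m$ in the definitions of $E(m)$ and $i_m$; the hypothesis that $E$ has no sinks is used both to ensure $E_H$ and $E/H$ have no sinks (so that Lemma~\ref{lem:times m} applies) and to control how the generating sets of vertex projections transform.
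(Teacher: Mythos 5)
Your overall strategy is the same as the paper's: reduce all three assertions to Lemma~\ref{lem:times m} applied to the subgraph and quotient graph attached to the saturated hereditary set $H$, after checking that $j_m\circ\tilde{\iota}_m$ is compatible with the corresponding sub/quotient structure of $E(m)$. Your graph-level bookkeeping is correct and matches the paper's (your identification of the saturation of $\{\mu\in E^{<m}: s(\mu)\in H\}$ inside $E(m)^0$ as $\{\mu : r(\mu)\in H\}$ is right; the paper's displayed description of this set appears to contain a typo), and the quotient case is fine as you state it, since $C^*(E)/I_H\cong C^*(E/H)$ genuinely is a generator-to-generator isomorphism.

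The one genuine defect is the opening claim that $I_H\cong C^*(E_H)$ via an isomorphism carrying generators to generators. This is false in general: $I_H=\clsp\{s_\alpha s_\beta^*: r(\alpha)=r(\beta)\in H\}$ contains elements $s_\alpha s_\beta^*$ with $s(\alpha),s(\beta)\notin H$, which have no counterpart in $C^*(E_H)$. What \cite[Theorem~4.1]{BatesPaskEtAl:NYJM00} gives is that $C^*(E_H)$ is canonically isomorphic to the corner $P_H C^*(E)P_H$, where $P_H=\sum_{v\in H}p_v$ is a \emph{full} projection in $\Mm(I_H)$. Consequently your assertion that ``the restriction of $j_m\circ\tilde{\iota}_m$ to $I_H$ is $j^{E_H}_m\circ\tilde{\iota}^{E_H}_m$ followed by the corner embedding'' does not typecheck: the two maps have different domains. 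The repair is exactly the extra step the paper inserts: check that $j_m\circ\tilde{\iota}_m$ carries $P_HC^*(E)P_H$ into the corner determined by $Q_{H,m}=\sum_{\mu}P_H\otimes\theta_{\mu,\mu}$, that on this corner it agrees with $j^{E_H}_m\circ\tilde{\iota}^{E_H}_m$ under the canonical identifications, and then use that compression by a full multiplier projection induces an isomorphism on $K$-theory to transport the multiplication-by-$m$ conclusion from the corner to all of $I_H$. The same fix is needed in your final step, where ``$I_{H_2}\cong C^*(E_{H_2})$'' is invoked to treat $I/J$; the paper instead compresses by $P_{H_2}$ and applies the quotient case inside the corner $P_{H_2}C^*(E)P_{H_2}\cong C^*(E_{H_2})$. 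With these full-corner arguments supplied, your proof coincides with the paper's.
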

\begin{proof}
Let $H = \{v \in E^0 : p_v \in I\}$. By \cite[Lemma~1.1]{BatesPaskEtAl:NYJM00}, the
multiplier projection $P_H := \sum_{v \in H} P_v$ is full in $I$, and there are canonical
isomorphisms $P_H C^*(E) P_H \cong C^*(EH)$ and $C^*(E)/I \cong C^*(E \setminus EH)$. Let
$H(m) := \{\mu \in E^{<m} : s(\mu) \in H\} \subseteq E(m)^0$. This $H(m)$ is a hereditary
set, and we have $H(m) E(m) = (HE)(m)$ as subgraphs of $E(m)$. So the proof of
\cite[Theorem~4.1(c)]{BatesPaskEtAl:NYJM00} shows that $P_{H(m)} := \sum_{\mu \in H(m)}
p_\mu \in \Mm( C^*(E(m)) )$ is a full projection in $I_{H(m)}$ and there is a canonical
isomorphism $P_{H(m)} C^*(E(m)) P_{H(m)} \cong C^*((HE)(m))$. The definition of the
homomorphism $j_m : C^*(E(m)) \to C^*(E) \otimes \Kk_{E^{<m}}$ shows that it restricts to
a homomorphism from $P_{H(m)} C^*(E(m)) P_{H(m)}$ to $P_H C^*(E) P_H \otimes
\Kk_{(HE)^{<m}}$. So, setting $Q_{H,m} := \sum_{\mu \in (HE)^{<m}} P_H \otimes
\theta_{\mu,\mu} \in \Mm(C^*(E) \otimes \Kk_{E^{<m}} )$, we see that the diagram
\[
\begin{tikzpicture}
    \node (HmEm) at (0,3) {$C^*(HE)$};
    \node (PEP) at (5,3) {$P_H C^*(E) P_H$};
    \node (E) at (10,3) {$C^*(E)$};
    \node (HmEmK) at (0,0) {$C^*(HE) \otimes \Kk_{(HE)^{<m}}$};
    \node (PEPK) at (5,0) {$Q_{H,m}  (C^*(E) \otimes \Kk_{E^{<m}}) Q_{H,m}$};
    \node (EK) at (10,0) {$C^*(E)$};
    \draw[-stealth] (HmEm)--(PEP) node[above, pos=0.5] {\small$\cong$};
    \draw[-stealth] (PEP)--(E) node[above, pos=0.5] {\small$\subseteq$};
    \draw[-stealth] (HmEmK)--(PEPK) node[above, pos=0.5] {\small$\cong$};
    \draw[-stealth] (PEPK)--(EK) node[above, pos=0.5] {\small$\subseteq$};
    \draw[-stealth] (HmEm)--(HmEmK) node[left, pos=0.5] {\small{$j^{HE}_m \circ \tilde{\iota}^{HE}_m$}};
    \draw[-stealth] (PEP)--(PEPK) node[left, pos=0.5] {\small{$j^{E}_m \circ \tilde{\iota}^{E}_m$}};
    \draw[-stealth] (E)--(EK) node[left, pos=0.5] {\small{$j^{E}_m \circ \tilde{\iota}^{E}_m$}};
\end{tikzpicture}
\]
commutes.

Lemma~\ref{lem:times m} implies that the vertical map on the left induces multiplication
by $m$ in $K$-theory, and so the map in the middle does too. This map is the restriction
of $j^{E}_m \circ \tilde{\iota}^{E}_m$ to $P_H C^*(E) P_H$. Since $P_H$ and $Q_{H,m}$ are
full in $I_H$ and $I_H \otimes \Kk_{E^{<m}}$, compression by these projections induces
isomorphisms in $K$-theory, so we deduce that the restriction of $j^{E}_m \circ
\tilde{\iota}^{E}_m$ to $I_H$ also induces the multiplication-by-$m$ map.

We have already showed that $j_m$ carries $I$ into the ideal $I_{H(m)} \lhd C^*(E(m))$
generated by $\{p_\mu : \mu \in H(m)\}$. Hence $j_m$ induces a homomorphism $\widetilde{j}_m
: C^*(E)/I \to C^*(E(m))/I_{H(m)}$. By \cite[Theorem~4.1]{BatesPaskEtAl:NYJM00} there is
a canonical isomorphism $C^*(E)/I \cong C^*(E \setminus EH)$. It is routine to check that
the saturation of $H(m)$ is the set $K = \{\mu \in E^{<m} : s(\mu) \in H\}$. So
\cite[Theorem~4.1]{BatesPaskEtAl:NYJM00} again implies that $C^*(E(m))/I_{H(m)}$ is
canonically isomorphic to $C^*(E(m) \setminus E(m)K)$. It is also routine to check that
$(E \setminus EH)(m) = E(m) \setminus E(m)K$ as subsets of $E(m)$. By comparing formulas
on generators, we see that the diagram
\[
\begin{tikzpicture}
    \node (E) at (0,3) {$C^*(E)$};
    \node (E/I) at (6,3) {$C^*(E)/I$};
    \node (E/H) at (12,3) {$C^*(E \setminus EH)$};
    \node (ExK) at (0,0) {$C^*(E) \otimes \Kk_{E^{<m}}$};
    \node (E/IxK) at (6,0) {$(C^*(E) \otimes \Kk_{E^{<m}})/(I \otimes \Kk_{E^{<m}})$};
    \node (E/HxK) at (12,0) {$C^*(E \setminus EH) \otimes \Kk_{E^{<m}}$};
    \draw[-stealth] (E)--(E/I) node[above, pos=0.5] {\small$q_I$};
    \draw[-stealth] (E/I)--(E/H) node[above, pos=0.5] {\small$\cong$};
    \draw[-stealth] (ExK)--(E/IxK) node[above, pos=0.5] {\small$q_{I \otimes \Kk_{E^{<m}}}$};
    \draw[-stealth] (E/IxK)--(E/HxK) node[above, pos=0.5] {\small$\cong$};
    \draw[-stealth] (E)--(ExK) node[left, pos=0.5] {\small{$j^{E}_m \circ \tilde{\iota}^{E}_m$}};
    \draw[-stealth] (E/I)--(E/IxK) node[left, pos=0.5] {\small{$(j^{E}_m \circ \tilde{\iota}^{E}_m)^{\sim}$}};
    \draw[-stealth] (E/H)--(E/HxK) node[left, pos=0.5] {\small{$j^{E\setminus EH}_m \circ \tilde{\iota}^{E\setminus EH}_m$}};
\end{tikzpicture}
\]
commutes. Since $C^*(E \setminus EH) \otimes \Kk_{(E\setminus EH)^{<m}}$ embeds as a full
corner in $C^*(E \setminus EH) \otimes \Kk_{E^{<m}}$, the corresponding inclusion map in
$K$-theory is an isomorphism. So Lemma~\ref{lem:times m} implies that the vertical
inclusion map $j^{E\setminus EH}_m \circ \tilde{\iota}^{E\setminus EH}_m$ on the right of
the diagram induces multiplication by $m$ in $K$-theory. Thus the middle vertical map
does too. Since this is precisely the map on the quotient induced by $j^{E}_m \circ
\tilde{\iota}^{E}_m$, we deduce that $(j_m \circ \tilde{\iota}_m)_*$ restricts to $m \cdot
\id$ on $K_*(I)$ and induces $m \cdot \id$ on $K_*(C^*(E)/I)$.

For the final assertion, apply the preceding assertion to the gauge-invariant ideal $P_H
J P_H$ of $P_H C^*(E) P_H \cong C^*(E \setminus EH)$, using that compression by the full
projection $P_H$ induces an isomorphism of the $K$-theory of $I$.
\end{proof}

\section{A technique of Enders}\label{sec:enders}

In this section we apply general results of Meyer-Nest and of Kirchberg to generalize a
technique of Enders to see that, under suitable hypotheses, an endomorphism $\kappa_m$ of
a strongly purely infinite nuclear $C^*$-algebra $A$ whose image $\FK(\kappa)$ in
filtered $K$-theory is the times-$m$ map can be post-composed with an automorphism of $A$
so that the resulting map approximates the identity. We apply this result to prove the
following theorem (the proof appears at the end of the section), which is our main
result.

\begin{thm}\label{thm:pi graph alg}
Let $E$ be a directed graph. Suppose that $C^*(E)$ is purely infinite and has finitely
many ideals. Then the nuclear dimension of $C^*(E)$ is 1.
\end{thm}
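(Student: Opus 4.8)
The plan is to reduce to the case where $E$ is row-finite with no sinks, establish an asymptotic order-$1$ factorization of the identity on $C^*(E)$ through finite-dimensional $C^*$-algebras, and then appeal to Winter--Zacharias' characterization of nuclear dimension to conclude $\nucdim(C^*(E)) \le 1$; the reverse inequality $\nucdim(C^*(E)) \ge 1$ follows because a purely infinite $C^*$-algebra is not AF and hence cannot have nuclear dimension $0$. The reduction step uses a Drinen--Tomforde-type desingularization: any graph $C^*$-algebra is stably isomorphic to $C^*(F)$ for a row-finite $F$ with no sinks, and nuclear dimension is invariant under stable isomorphism (and the hypotheses ``purely infinite'' and ``finitely many ideals'' are preserved), so we may assume $E$ is row-finite with no sinks from the outset.

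Now the core argument. By Corollary~\ref{cor:order-zero-approximation}, the sequence $(\tilde\iota_m)$ of Rout's homomorphisms $C^*(E) \to C^*(E(m))$ has an asymptotic order-$1$ factorization through finite-dimensional $C^*$-algebras. By Proposition~\ref{prp:E(m)->ExK} there are injective homomorphisms $j_m : C^*(E(m)) \to C^*(E) \otimes \Kk_{E^{<m}}$, and by Lemma~\ref{lem:times m} and Lemma~\ref{lem:respects ideals} the composite $j_m \circ \tilde\iota_m : C^*(E) \to C^*(E) \otimes \Kk_{E^{<m}}$ induces the multiplication-by-$m$ map on $K_*(I/J)$ for every pair of gauge-invariant ideals $J \subseteq I$ of $C^*(E)$; in other words, its class in filtered $K$-theory is the times-$m$ endomorphism $\FK(j_m \circ \tilde\iota_m) = m \cdot \id$. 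The idea, following Enders, is then to choose two suitable values $m, m'$ (of opposite parity, say $m$ even and $m'$ odd, or more precisely with $\gcd$-type control so that $-m$ is invertible mod the relevant torsion), and combine the maps $j_{m} \circ \tilde\iota_{m}$ with $\lambda \circ (j_{m'} \circ \tilde\iota_{m'})$ — where $\lambda$ is an automorphism of $C^*(E)\otimes\Kk$ realizing multiplication by $-1$ on filtered $K$-theory, which exists by the Kirchberg--Meyer--Nest classification machinery for strongly purely infinite nuclear $C^*$-algebras with the relevant $K$-theoretic data — so that the resulting homomorphism induces the \emph{identity} on filtered $K$-theory. Since $C^*(E)$ is purely infinite, nuclear, separable, and has finitely many ideals, a strong purely infinite graph $C^*$-algebra is classified by filtered $K$-theory (\cite{MeyerNest:MJM09, MeyerNest:CJM12, Kirchberg}), so this combined map is asymptotically unitarily equivalent to the identity $\id_{C^*(E)}$. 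Splicing together the order-$1$ factorizations of the two pieces (using that nuclear dimension is unchanged by composing with automorphisms and that the direct sum of two order-$1$ factorizations can be arranged as a single order-$1$ factorization after reorganizing the four finite-dimensional summands into two blocks, exactly as in Enders' argument) then yields an asymptotic order-$1$ factorization of a sequence asymptotically equal to $\id_{C^*(E)}$ through finite-dimensional $C^*$-algebras. Hence $\nucdim(C^*(E)) \le 1$, and combined with $\nucdim(C^*(E)) \ge 1$ this gives equality.

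The main obstacle is the Enders-style splicing of the two order-$1$ factorizations into a single order-$1$ factorization while preserving the order-zero property on each of the two summands: one starts with $j_m \circ \tilde\iota_m \sim \varphi_m^{(0)} \oplus \varphi_m^{(1)}$ and $\lambda \circ j_{m'} \circ \tilde\iota_{m'} \sim \varphi_{m'}^{(0)} \oplus \varphi_{m'}^{(1)}$, each a sum of two order-zero contractions, and one must produce, from the homomorphism $\rho_m$ built out of both pieces that induces the identity on $\FK$, a factorization $\rho_m \sim \Phi_m^{(0)} \oplus \Phi_m^{(1)}$ with just \emph{two} order-zero summands. Enders' trick for $\Oo_\infty$ exploits orthogonality relations coming from the specific structure of the maps (the ranges of the two ``copies'' can be arranged to be orthogonal in a way that lets two of the four order-zero maps be amalgamated), and adapting this to the graph-algebra setting with a nontrivial ideal lattice — ensuring the amalgamation is compatible with all the ideals — is the delicate point. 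A secondary technical obstacle is verifying that the automorphism $\lambda$ can genuinely be chosen (this requires checking that the relevant obstruction/lifting groups in the Meyer--Nest framework vanish, or citing the appropriate theorem from \cite{MeyerNest:CJM12, Kirchberg} for finite accordion-type or general finite ideal lattices), and that ``asymptotically unitarily equivalent to the identity'' can be upgraded to something that genuinely transfers the nuclear-dimension bound — this is where the separability of $C^*(E)$ and a standard reindexing/diagonal argument are needed.
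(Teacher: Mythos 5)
Your proposal follows essentially the same route as the paper: reduce to a row-finite graph with no sinks by desingularization and stabilization, take the asymptotic order-$1$ factorizations of the maps $\tilde{\iota}_m$ through finite-dimensional algebras (Corollary~\ref{cor:order-zero-approximation}), compose with the reinclusions $j_m$ to obtain full $X$-equivariant endomorphisms $\kappa_m$ with $\FK(\kappa_m)=m\cdot\FK(\id)$ (Lemma~\ref{lem:respects ideals}), and then run Enders' trick with an automorphism realizing $-\FK(\id)$. Two points, however, need correction, and one of them is a genuine gap as written.

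The gap is the step ``the combined map induces the identity on filtered $K$-theory, hence is asymptotically unitarily equivalent to $\id$ by classification.'' For a general finite primitive-ideal space, filtered $K$-theory does \emph{not} determine the $KK(X;\cdot)$-class (Meyer--Nest exhibit counterexamples in \cite{MeyerNest:CJM12}), so one cannot invoke ``classification by filtered $K$-theory'' here; the paper is careful never to do so. Instead, Proposition~\ref{prp:Enders technique} argues that $\FK(\beta_m)=\FK(\id_A)$ forces $KK(X;\beta_m)$ to be a $KK(X;\cdot)$-\emph{equivalence} by \cite[Proposition~4.15]{MeyerNest:MJM09}, then uses \cite[Folgerung~4.3]{Kirchberg} to produce a correcting automorphism $\gamma_m$ with $KK(X;\gamma_m\circ\beta_m)=KK(X;\id_A)$, and only then applies the uniqueness theorem (Theorem~\ref{thm:KK<->aua}), which itself requires checking that the maps are full, $X$-equivariant and satisfy $\phi\oplus\phi\aua\phi$ --- this is where $\Oo_\infty$-absorption and the auxiliary maps $j_X^A$ enter. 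Without the $\gamma_m$ your argument does not close. Secondarily: the two exponents are simply the consecutive integers $m+1$ and $m$, so that $(m+1)\FK(\id)-m\,\FK(\id)=\FK(\id)$ exactly; no parity or gcd/torsion considerations arise. And the splicing of the two order-$1$ factorizations into one is \emph{not} where the ideal lattice causes trouble: it is done exactly as for $\Oo_\infty$, by conjugating the two families of order-zero maps by isometries $s_1,s_2$ with orthogonal ranges (available since $A$ is stable) and pairing the $k$-th summands, which preserves the order-zero property automatically; the ideal lattice only matters in the uniqueness step above.
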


We start by recalling some terminology and background from \cite{MeyerNest:MJM09}.

We write $\Prim(A)$ for the primitive-ideal space of a $C^*$-algebra $A$, and equip
$\Prim(A)$ with the hull-kernel topology. Recall from
\cite[Definition~2.3]{MeyerNest:MJM09} that if $X$ is a topological space, then a
$C^*$-algebra over $X$ is a pair $(A, \psi)$ consisting of a $C^*$-algebra $A$ and a
continuous map $\psi : \Prim(A) \to X$. As in \cite[Definition~5.1]{MeyerNest:MJM09}, we
say that $(A,\psi)$ is a tight $C^*$-algebra over $X$ if $\psi$ is a homeomorphism.

Let $(A,\psi)$ be a $C^*$-algebra over $X$. We write $\OO(X)$ for the lattice of open
subsets of $X$. Given $U \in \OO(X)$, we write $A(U)$ for the corresponding ideal
$\bigcap \{I \in \Prim(A) : \psi(I) \not\in U\}$. We then have $A(U \cup V) = A(U) +
A(V)$ and $A(U \cap V) = A(U) \cap A(V)$ for $U,V \in \OO(X)$ (in fact, a stronger
statement is true \cite[Lemma~2.8]{MeyerNest:MJM09}).

As in \cite[Definition~2.10]{MeyerNest:MJM09}, if $(A, \psi)$ and $(B, \rho)$ are
$C^*$-algebras over $X$, we say that a homomorphism $\phi : A \to B$ is
\emph{$X$-equivariant} if $\phi(A(U)) \subseteq B(U)$ for all $U \in \OO(X)$. If, in
addition, $A$ is a tight $C^*$-algebra over $X$, we say that $\phi$ is \emph{full} if,
whenever $a$ generates $A(U)$ as an ideal, $\phi(a)$ generates $B(U)$ as an ideal.

Given homomorphisms $\phi, \psi : A \to B$ between $C^*$-algebras, we say that $\phi$ and
$\psi$ are asymptotically unitarily equivalent, and write $\phi \aua \psi$, if there is a
continuous family of unitaries $U_t$  ($t \in [1, \infty )$) in $\mathcal{M}(B)$ such that $\Ad U_t \circ \phi(a) \to
\psi(a)$ as $t \to \infty$ for all $a \in A$.  We say that $\phi$ and $\psi$ are approximately unitarily equivalent, and write $\phi \approx_{u} \psi$, if there exists a sequence $U_{n}$ of unitaries in $\mathcal{M}(B)$ such that $U_{n} \phi ( a ) U_{n}^{*} \to \psi ( a )$ for all $a \in A$.

\begin{lem}\label{lem:jXA properties}
Let $X$ be a finite topological space, and let $(A,\psi)$ be a tight $C^*$-algebra over
$X$. Fix a unital homomorphism $j_1 : \Oo_\infty \to \Oo_2$ and a nonzero homomorphism
$j_2 : \Oo_2 \to \Oo_\infty$, and let $j = (j_2 \circ j_1) \otimes \id_\Kk : \Oo_\infty
\otimes \Kk \to \Oo_\infty \otimes \Kk$. Suppose that $A$ is separable and nuclear, and
define $j_X^A := \id_A \otimes j : A \otimes \Oo_\infty \otimes \Kk \to A \otimes
\Oo_\infty \otimes \Kk$. Then
\begin{enumerate}
\item\label{it:id+j=id} $\id_A \oplus j_X^A \aua \id_{A \otimes \Oo_\infty \otimes
    \Kk}$;
\item\label{it:j+j=j} $j_X^A \oplus j_X^A \aua j_X^A$; and
\item\label{it:full+equivariant} $j_X^A$ is a full $X$-equivariant homomorphism.
\end{enumerate}
If $(B, \rho)$ is a second tight $C^*$-algebra over $X$ and if $\phi : A \otimes
\Oo_\infty \otimes \Kk \to B \otimes \Oo_\infty \otimes \Kk$ is a homomorphism, then
$(j_X^B \circ \phi) \oplus (j_X^B \circ \phi) \aua j_X^B \circ \phi$.
\end{lem}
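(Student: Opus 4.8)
The plan is to derive this ``relative'' statement from part~\eqref{it:j+j=j} by precomposition with $\phi$. Write $D := B \otimes \Oo_\infty \otimes \Kk$ and fix, once and for all, an isomorphism $M_2(D) \cong D$ obtained by absorbing the $2 \times 2$ matrices into the copy of $\Kk$; all direct sums of maps into $D$ below are formed using this identification. Since direct sum commutes with precomposition, that is $(f \oplus g) \circ h = (f \circ h) \oplus (g \circ h)$ for homomorphisms $f,g,h$, we have the literal equality of homomorphisms
\[
(j_X^B \circ \phi) \oplus (j_X^B \circ \phi) = (j_X^B \oplus j_X^B) \circ \phi
\]
from $A \otimes \Oo_\infty \otimes \Kk$ to $D$.

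Next I would apply part~\eqref{it:j+j=j} of the lemma with $A$ replaced by the separable nuclear $C^*$-algebra $B$: this yields a continuous family of unitaries $(U_t)_{t \in [1,\infty)}$ in $\Mm(D) = \Mm(B \otimes \Oo_\infty \otimes \Kk)$ such that $\Ad U_t \circ (j_X^B \oplus j_X^B)(b) \to j_X^B(b)$ as $t \to \infty$ for every $b \in D$. Evaluating this convergence at $b = \phi(a)$ for an arbitrary $a \in A \otimes \Oo_\infty \otimes \Kk$, and invoking the displayed equality, gives
\[
\Ad U_t \circ \big((j_X^B \circ \phi) \oplus (j_X^B \circ \phi)\big)(a) \to (j_X^B \circ \phi)(a)
\]
as $t \to \infty$. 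Since $\Mm(D)$ is exactly the multiplier algebra relevant to the codomain of $j_X^B \circ \phi$, the family $(U_t)$ witnesses $(j_X^B \circ \phi) \oplus (j_X^B \circ \phi) \aua j_X^B \circ \phi$, as required.

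There is no substantial obstacle here; the one point that genuinely needs care is that the two direct sums appearing on the two sides of the displayed equality are formed with respect to the \emph{same} identification $M_2(D) \cong D$, so that they are literally the same homomorphism rather than merely unitarily equivalent via an extra coordinate flip --- which is why I fix that isomorphism at the outset. Beyond this, the argument is just the elementary observation that asymptotic unitary equivalence of homomorphisms into $D$ is preserved under precomposition with any homomorphism whose range lands in the source, because precomposition does not disturb the witnessing family of unitaries in $\Mm(D)$.
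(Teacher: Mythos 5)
Your argument for the final assertion is correct and is precisely the paper's argument: both reduce to the identity $(j_X^B \circ \phi) \oplus (j_X^B \circ \phi) = (j_X^B \oplus j_X^B) \circ \phi$ and then conjugate by the unitaries witnessing $j_X^B \oplus j_X^B \aua j_X^B$. Your care about fixing a single identification $M_2(D) \cong D$ matches the paper's convention of forming Cuntz sums $f \oplus g = \Ad(t_1)\circ f + \Ad(t_2)\circ g$ with respect to fixed isometries $t_1, t_2 \in \Mm(D)$ satisfying $t_1t_1^* + t_2t_2^* = 1$, under which $(f\oplus g)\circ h = (f\circ h)\oplus(g\circ h)$ is equally immediate.

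The gap is that the statement to be proved is the entire lemma, and your proposal establishes only its last sentence while taking part~(2) as an input; parts (1)--(3) are the substantive content and cannot be obtained by this kind of formal manipulation with direct sums. In the paper they are proved as follows: since $j_2 \circ j_1$ factors through $\Oo_2$, one has $KK(j_1) \in KK(\Oo_\infty, \Oo_2) = \{0\}$, hence $KK(j) = 0$, and the classification theorem of \cite[Theorem~4.13]{Phillips:DM00} then gives $\id_{\Oo_\infty\otimes\Kk}\oplus j \aua \id_{\Oo_\infty\otimes\Kk}$ and $j \oplus j \aua j$; tensoring with $\id_A$ (after writing the Cuntz sums using isometries of the form $1_{\Mm(A)}\otimes s_i$) yields (1) and (2). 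Part (3) is a direct computation using $j(a\otimes x) = j_2(1)\,a\,j_2(1)\otimes x$: equivariance holds because $j_X^A$ maps $(A\otimes\Oo_\infty\otimes\Kk)(U)$ into $A(U)\otimes j_2(1)\Oo_\infty j_2(1)\otimes\Kk$, and fullness holds because $j_2(1)\neq 0$ and $\Oo_\infty$ and $\Kk$ are simple, so for $a$ full in $A(U)$ the ideal generated by $a\otimes j_2(1)\otimes e_{11}$ is all of $(A\otimes\Oo_\infty\otimes\Kk)(U)$. Without an argument of this kind the lemma is not proved; moreover, invoking part~(2) while the lemma itself is the object of proof is circular unless you supply that argument.
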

\begin{proof}
Since $1 \otimes \Mm(\Kk) \cong 1 \otimes \Bb(\ell^2)$ is contained in $\Mm(\Oo_\infty
\otimes \Kk)$, there exist isometries $s_1, s_2 \in \Mm(\Oo_\infty \otimes \Kk)$ such
that $s_1 s^*_1 + s_2 s^*_2 = 1_{\Mm(\Oo_\infty \otimes \Kk)}$. Hence the isometries $t_i
:= 1_{\Mm(A)} \otimes s_i \in \Mm(A \otimes \Oo_\infty \otimes \Kk)$ satisfy $t_1 t^*_1 +
t_2 t^*_2 = 1$ as well.

We have $KK(j_1) \in KK(\Oo_\infty, \Oo_2) = \{0\}$, and hence $KK(j) = 0$. So
\cite[Theorem~4.13]{Phillips:DM00} implies that
\begin{itemize}
    \item[(j1)] $\id_{\Oo_\infty \otimes \Kk} \oplus j \aua \id_{\Oo_\infty \otimes
        \Kk}$,
    \item[(j2)] $j(a \otimes x) = j_2(1) a j_2(1) \otimes x$, and
    \item[(j3)] $j \oplus j \aua j$.
\end{itemize}

We have
\[
\id_A \oplus j_X^A
    = \Ad(t_1)\circ \id_{A \otimes \Oo_\infty \otimes \Kk} + \Ad(t_2)\circ j_X^A\\
    = \id_A \otimes \big(\Ad(s_1) \circ \id_{\Oo_\infty \otimes \Kk}
        + \Ad(s_2) \circ j\big).
\]
Property~(j1) of $j$ therefore gives $\id_A \oplus j_X^A \aua \id_A \otimes
\id_{\Oo_\infty \otimes \Kk} = \id_{A \otimes \Oo_\infty \otimes \Kk}$. Likewise, using
property~(j2) at the last step, we obtain
\begin{align*}
j_X^A \oplus j_X^A
    &= \Ad(t_1)\circ j_X^A + \Ad(t_2)\circ j_X^A \\
    &= \id_A \otimes \big(\Ad(s_1) \circ j
        + \Ad(s_2) \circ j\big)
    \aua \id_A \otimes j = j_X^A.
\end{align*}
For $U \in \OO(X)$, we have
\begin{align*}
j_X^A((A \otimes \Oo_\infty \otimes \Kk)(U))
    &= A(U) \otimes j(\Oo_\infty \otimes \Kk) \\
    &= A(U) \otimes j_2(1)\Oo_\infty j_2(1) \otimes \Kk
    \subseteq A(U) \otimes \Oo_\infty \otimes \Kk,
\end{align*}
so that $j_X^A$ is $X$-equivariant. To see that it is full, fix a full element $y \in (A
\otimes \Oo_\infty \otimes \Kk)(U)$. Fix a full element $a \in A(U)$ and let $e_{ij}$ be
the canonical matrix units in $\Kk$. Then $a \otimes 1 \otimes e_{11}$ belongs to the
ideal generated by $y$, and so $a \otimes j_2(1) \otimes e_{11}$ belongs to the ideal
generated by $j_X^A(y)$. Since $j_2(1)$ is nonzero and $\Oo_\infty$ and $\Kk$ are simple,
the ideal generated by $a \otimes j_2(1) \otimes e_{11}$ is $\overline{AaA} \otimes
\Oo_\infty \otimes \Kk = (A \otimes \Oo_\infty \otimes \Kk)(U)$.

Now suppose that $(B, \rho)$ is a second tight $C^*$-algebra over $X$ and that $\phi : A
\otimes \Oo_\infty \otimes \Kk \to B \otimes \Oo_\infty \otimes \Kk$ is a homomorphism.
The above argument shows that $j_X^B \oplus j_X^B \aua j_X^B$, so there exists a continuous family of unitaries
$U_t$ such that $U_t (j_X^B \oplus j_X^B)(b) U^*_t \to j_X^B(b)$ for all $b$. Hence
\[
U_t\big((j_X^B \circ \phi) \oplus (j_X^B \circ \phi)\big)(a)U^*_t
    = U_t\big(j_X^B \oplus j_X^B\big)(\phi(a))U^*_t
    \to j_X^B(\phi(a)).\qedhere
\]
\end{proof}

\begin{lem}\label{lem:magic aua}
Suppose that $(A,\psi)$ and $(B,\rho)$ are separable, nuclear, tight $C^*$-algebras over
a finite topological space $X$. For $i = 1,2$, suppose that $\phi_i : A \otimes
\Oo_\infty \otimes \Kk \to B \otimes \Oo_\infty \otimes \Kk$ is a full $X$-equivariant
homomorphism such that $\phi_i \oplus \phi_i \aua \phi_i$. Then $\phi_1 \aua \phi_2$.
\end{lem}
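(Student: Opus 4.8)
The plan is to reduce to Kirchberg's uniqueness theorem for $X$-equivariant homomorphisms, working within the equivariant Kasparov theory $KK^X$ developed in \cite{MeyerNest:MJM09, MeyerNest:CJM12}. Put $\mathcal{A} := A \otimes \Oo_\infty \otimes \Kk$ and $\mathcal{B} := B \otimes \Oo_\infty \otimes \Kk$, regarded as $C^*$-algebras over $X$ via the canonical homeomorphisms $\Prim(\mathcal{A}) \cong \Prim(A)$ and $\Prim(\mathcal{B}) \cong \Prim(B)$ followed by $\psi$ and $\rho$; with respect to these structures $\phi_1$ and $\phi_2$ are full $X$-equivariant homomorphisms by hypothesis. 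The first step is to note that $\mathcal{A}$ and $\mathcal{B}$ are separable, nuclear, stable and $\Oo_\infty$-absorbing (the latter because $\Oo_\infty \otimes \Oo_\infty \cong \Oo_\infty$), hence strongly purely infinite, since absorption of $\Oo_\infty$ by a separable nuclear $C^*$-algebra forces strong pure infiniteness; and that they are tight over $X$, because $A$ and $B$ are and tensoring by the simple $C^*$-algebras $\Oo_\infty$ and $\Kk$ leaves the primitive ideal space unchanged. So $\mathcal{A}$ and $\mathcal{B}$ fall within the scope of Kirchberg's classification \cite{Kirchberg}: two full $X$-equivariant $*$-homomorphisms $\mathcal{A} \to \mathcal{B}$ are asymptotically unitarily equivalent if and only if they induce the same class in $KK^X(\mathcal{A}, \mathcal{B})$.

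The second step is to show that $[\phi_1] = [\phi_2]$ in $KK^X(\mathcal{A}, \mathcal{B})$, and in fact that both classes vanish. Since $\mathcal{B}$ is stable there are isometries $s_1, s_2 \in \Mm(\mathcal{B})$ with $s_1 s_1^* + s_2 s_2^* = 1$, so that $\phi_i \oplus \phi_i = \Ad(s_1) \circ \phi_i + \Ad(s_2) \circ \phi_i$ represents $[\phi_i] + [\phi_i]$ in $KK^X(\mathcal{A}, \mathcal{B})$. Asymptotically unitarily equivalent homomorphisms induce the same $KK^X$-class, so the hypothesis $\phi_i \oplus \phi_i \aua \phi_i$ yields $[\phi_i] + [\phi_i] = [\phi_i]$, i.e.\ $[\phi_i] = 0$, for $i = 1, 2$. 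Hence $[\phi_1] = 0 = [\phi_2]$, and since $\phi_1$ and $\phi_2$ are full $X$-equivariant homomorphisms inducing the same class in $KK^X(\mathcal{A},\mathcal{B})$, the uniqueness theorem above gives $\phi_1 \aua \phi_2$.

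I expect the main obstacle to be identifying the precise form of Kirchberg's classification theorem over a finite topological space and verifying that each of its standing hypotheses---separability, nuclearity, stability, strong pure infiniteness, tightness over $X$, and fullness of the homomorphisms---is actually in force here, together with reconciling that statement with the $KK^X$-machinery of Meyer--Nest. The auxiliary facts that $\aua$ preserves $KK^X$-classes and that the direct sum of two homomorphisms into the stable algebra $\mathcal{B}$ realises the sum of their $KK^X$-classes are standard, though the latter does use stability of $\mathcal{B}$.
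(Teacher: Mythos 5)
Your reduction is circular relative to the result being proved. The uniqueness theorem you invoke in the final step --- that two full $X$-equivariant homomorphisms between separable, nuclear, stable, strongly purely infinite tight $C^*$-algebras over $X$ inducing the same class in $KK(X;\cdot)$ are asymptotically unitarily equivalent --- is not available as a black box here: in this paper it is precisely Theorem~\ref{thm:KK<->aua}, and its proof uses the present lemma twice (to absorb the auxiliary self-absorbing homomorphisms $h_i$ supplied by Kirchberg's Hauptsatz~4.2). What Kirchberg's paper provides directly, and what the present lemma is essentially a restatement of, is Hauptsatz~2.15: a uniqueness theorem with no $KK$-hypothesis at all, asserting that any two full $X$-equivariant homomorphisms each of which absorbs itself, i.e.\ satisfies $\phi\oplus\phi \aua \phi$, are asymptotically unitarily equivalent. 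The paper's proof is a one-line citation of that result; your plan instead routes through a statement that is a consequence of, not an input to, this lemma, so the key step is unjustified as written.

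That said, your intermediate observation is correct: since $\Mm(B\otimes\Oo_\infty\otimes\Kk)$ contains isometries $s_1,s_2$ with $s_1s_1^*+s_2s_2^*=1$, the block sum represents addition in $KK(X;\cdot)$, and asymptotic unitary equivalence preserves $KK(X;\cdot)$-classes, so $\phi_i\oplus\phi_i \aua \phi_i$ does force $KK(X;\phi_i)=0$. This shows the lemma is consistent with the eventual classification (both maps necessarily have the same, namely zero, class), but it cannot substitute for Hauptsatz~2.15, which is exactly the ingredient that yields $\phi_1 \aua \phi_2$ without already possessing a uniqueness theorem phrased in terms of $KK(X;\cdot)$. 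Your verifications that $A\otimes\Oo_\infty\otimes\Kk$ and $B\otimes\Oo_\infty\otimes\Kk$ are separable, nuclear, stable, $\Oo_\infty$-absorbing (hence strongly purely infinite) and tight over $X$ are fine, and are indeed the standing hypotheses one must check; the fix is simply to cite the correct form of Kirchberg's theorem, whose hypothesis is self-absorption rather than equality of $KK$-classes.
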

\begin{proof}
This follows from \cite[Hauptsatz~2.15]{Kirchberg}.
\end{proof}

We can now deduce that, amongst full equivariant homomorphisms between strongly purely
infinite $C^*$-algebras, asymptotic unitary equivalence is characterized by equivariant
$KK$-equivalence.

\begin{thm}\label{thm:KK<->aua}
Let $(A,\psi)$ and $(B,\rho)$ be separable, nuclear, tight $C^*$-algebras over a finite
topological space $X$. For $i = 1,2$ let $\phi_i : A \otimes \Oo_\infty \otimes \Kk \to B
\otimes \Oo_\infty \otimes \Kk$ be a full $X$-equivariant homomorphism. Then $KK(X;
\phi_1) = KK(X; \phi_2)$ if and only if $\phi_1 \aua \phi_2$.

Consequently, if $KK(X; \phi_1) = KK(X; \phi_2)$, then $\phi_{1} \approx_{u} \phi_{2}$.
\end{thm}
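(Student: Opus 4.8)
The plan is to reduce both directions of the stated equivalence to results already assembled in this section, and then harvest the "consequently" clause by a standard observation relating asymptotic unitary equivalence to approximate unitary equivalence. For the forward direction, assume $KK(X;\phi_1) = KK(X;\phi_2)$. The idea is to absorb the map $j$ of Lemma~\ref{lem:jXA properties}: set $\Phi_i := j_X^B \circ \phi_i$ for $i = 1,2$. Since $KK(X; j_X^B) = 0$ (because $KK(j_1) = 0$ forces $KK(j) = 0$, and tensoring is $KK(X)$-functorial), we have $KK(X;\Phi_1) = KK(X;\Phi_2)$ trivially — they are both zero. But this is not yet what we want; rather, the point of passing through $j_X^B$ is that the last assertion of Lemma~\ref{lem:jXA properties} gives $\Phi_i \oplus \Phi_i \aua \Phi_i$ for each $i$, and $\Phi_i$ is again full and $X$-equivariant (fullness and equivariance are preserved under composition with the full equivariant map $j_X^B$). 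Thus Lemma~\ref{lem:magic aua} applies and yields $\Phi_1 \aua \Phi_2$, i.e. $j_X^B \circ \phi_1 \aua j_X^B \circ \phi_2$. Now I use Lemma~\ref{lem:jXA properties}\eqref{it:id+j=id}, which says $\id \oplus j_X^B \aua \id$ on $B \otimes \Oo_\infty \otimes \Kk$; composing this asymptotic unitary equivalence with $\phi_i$ on the right gives $\phi_i \oplus (j_X^B \circ \phi_i) \aua \phi_i$. Combining: $\phi_1 \aua \phi_1 \oplus (j_X^B \circ \phi_1) \aua \phi_2 \oplus (j_X^B \circ \phi_2) \aua \phi_2$, where the middle equivalence uses $\phi_1 \aua \phi_2$ — wait, that is circular. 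The correct route is: $\phi_1 \aua \phi_1 \oplus \Phi_1$ (by the first step applied to $\phi_1$), then $\phi_1 \oplus \Phi_1 \aua \phi_1 \oplus \Phi_2$ is what we need, but to conclude $\phi_1 \aua \phi_2$ we instead observe that since $\phi_i \aua \phi_i \oplus \Phi_i$ and each $\Phi_i$ satisfies $\Phi_i \oplus \Phi_i \aua \Phi_i$, the maps $\widetilde\phi_i := \phi_i \oplus \Phi_i$ satisfy $\widetilde\phi_i \oplus \widetilde\phi_i \aua \widetilde\phi_i$ (a direct-sum bookkeeping using associativity and commutativity of $\oplus$ up to unitary equivalence in $B \otimes \Oo_\infty \otimes \Kk$, together with $\Phi_i \oplus \Phi_i \aua \Phi_i$), and $\widetilde\phi_i$ is full and $X$-equivariant; they have equal $KK(X)$-class because $KK(X;\Phi_i) = 0$ and $KK(X;\phi_1) = KK(X;\phi_2)$. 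So Lemma~\ref{lem:magic aua} gives $\widetilde\phi_1 \aua \widetilde\phi_2$, hence $\phi_1 \aua \phi_2$.

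For the reverse direction, assume $\phi_1 \aua \phi_2$. Asymptotic unitary equivalence implies equality of $KK$-classes in great generality: a continuous path of unitaries $U_t$ in $\Mm(B \otimes \Oo_\infty \otimes \Kk)$ implementing the equivalence gives a homotopy of the associated Cuntz pairs, hence $KK(\phi_1) = KK(\phi_2)$; the $X$-equivariant refinement $KK(X;-)$ is defined so that $X$-equivariant homotopies (which these unitary conjugations are, since $\Ad U_t$ preserves every ideal $B(U)$ as $\Mm(B(U))$ is an ideal-stable corner) induce the same class. I would simply cite the relevant functoriality of $KK(X;-)$ from \cite{MeyerNest:MJM09} (homotopy invariance) for this implication.

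Finally, for the "consequently" clause: if $KK(X;\phi_1) = KK(X;\phi_2)$ then by the equivalence just proved $\phi_1 \aua \phi_2$, witnessed by a norm-continuous path $(U_t)_{t \in [1,\infty)}$ of unitaries in $\Mm(B \otimes \Oo_\infty \otimes \Kk)$ with $\Ad U_t \circ \phi_1(a) \to \phi_2(a)$ for all $a$. Choosing any sequence $t_n \to \infty$ and setting $U_n := U_{t_n}$ gives a sequence of unitaries in the multiplier algebra with $U_n \phi_1(a) U_n^* \to \phi_2(a)$ for all $a$, which is exactly $\phi_1 \approx_u \phi_2$.

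The main obstacle I anticipate is purely organizational rather than deep: getting the direct-sum bookkeeping in the forward direction right so that one genuinely lands in the hypotheses of Lemma~\ref{lem:magic aua} without circularity — that is, exhibiting honest full $X$-equivariant self-absorbing ($\phi \oplus \phi \aua \phi$) homomorphisms with the prescribed $KK(X)$-classes, rather than accidentally assuming the conclusion. Everything else is a matter of quoting Lemmas~\ref{lem:jXA properties} and~\ref{lem:magic aua} and the homotopy invariance of $KK(X;-)$.
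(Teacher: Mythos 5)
Your reverse direction and your derivation of the ``consequently'' clause are fine and match the intended arguments (the paper simply invokes the definition of the $KK(X;\cdot)$ functor for the former, and the latter is indeed just sampling the continuous path of unitaries along a sequence $t_n \to \infty$). The forward direction, however, has a genuine gap, and it is exactly at the point you flagged as ``organizational.'' Your proposed fix sets $\widetilde\phi_i := \phi_i \oplus \Phi_i$ with $\Phi_i = j_X^B \circ \phi_i$ and claims $\widetilde\phi_i \oplus \widetilde\phi_i \aua \widetilde\phi_i$. This is false in general: by Lemma~\ref{lem:jXA properties}\eqref{it:id+j=id} you have $\widetilde\phi_i \aua \phi_i$, so the claimed self-absorption is equivalent to $\phi_i \oplus \phi_i \aua \phi_i$, which fails already for $\phi_i = \id$ (it would force $2\cdot KK(X;\phi_i) = KK(X;\phi_i)$). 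There is also a structural warning sign: Lemma~\ref{lem:magic aua} takes no $K$-theoretic input whatsoever --- it asserts that \emph{any} two full $X$-equivariant homomorphisms satisfying $\phi \oplus \phi \aua \phi$ are asymptotically unitarily equivalent. So your argument, if it worked, would never actually use the hypothesis $KK(X;\phi_1) = KK(X;\phi_2)$ and would prove that all full $X$-equivariant homomorphisms are mutually $\aua$, which is absurd. Your parenthetical appeal to the equality of $KK(X)$-classes of the $\widetilde\phi_i$ does no work, because Lemma~\ref{lem:magic aua} has no hypothesis into which it can be fed.

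The missing ingredient is Kirchberg's existence/uniqueness theorem (Hauptsatz~4.2 of \cite{Kirchberg}), and this is where the $KK(X)$-hypothesis genuinely enters: it supplies full $X$-equivariant homomorphisms $h_1, h_2$ with $h_i \oplus h_i \aua h_i$ and, crucially, $\phi_1 \oplus h_1 \aua \phi_2 \oplus h_2$. One then uses Lemma~\ref{lem:magic aua} only to identify $h_i \aua j_X^B \circ \phi_i$ (both sides being full, equivariant and self-absorbing), so that $\phi_i \aua \phi_i \oplus j_X^B\circ\phi_i \aua \phi_i \oplus h_i$, and the conclusion follows from $\phi_1 \oplus h_1 \aua \phi_2 \oplus h_2$. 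Everything you built around $j_X^B$ is used in the actual proof, but only to \emph{absorb} the correction terms $h_i$; it cannot substitute for the step that converts $KK(X)$-equality into an asymptotic unitary equivalence of stabilized maps.
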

\begin{proof}
If $\phi_1 \aua \phi_2$, then they are $KK(X;\cdot)$-equivalent by definition of the
$KK(X; \cdot)$ functor.

Suppose that $KK(X; \phi_1) = KK(X; \phi_2)$. By \cite[Hauptsatz~4.2]{Kirchberg}, there
are full $X$-equivariant homomorphisms $h_1, h_2 : A \otimes \Oo_\infty \otimes \Kk \to B
\otimes \Oo_\infty \otimes \Kk$ such that $h_i \oplus h_i \aua h_i$ for $i = 1,2$ and such that
$\phi_1 \oplus h_1 \aua \phi_2 \oplus h_2$.

The final statement of Lemma~\ref{lem:jXA properties} implies that each $j_X^B \circ
\phi_i$ satisfies $j_X^B \circ \phi_i \oplus j_X^B \circ \phi_i \aua j_X^B \circ \phi_i$,
and so we may apply Lemma~\ref{lem:magic aua} twice to see that $j_X^B \circ \phi_i \aua
h_i$ for $i = 1,2$. Lemma~\ref{lem:jXA properties}(\ref{it:id+j=id}) implies that $\id_B
\oplus j_X^B \aua \id_B$, and so for each $i$ we have
\[
\phi_i \aua (\id_B \oplus j_X^B)\circ \phi_i
    = \phi_i \oplus j_X^B \circ \phi_i
    \aua \phi_i \oplus h_i.
\]
Since $\phi_1 \oplus h_1 \aua \phi_2 \oplus h_2$, the result follows.
\end{proof}

We now have the material we need to extend Enders' technique to purely infinite
$C^*$-algebras with finite primitive-ideal spaces, leading to the proof of our main
result about purely infinite graph $C^*$-algebras. Recall that given a finite topological
space $X$, the filtered-$K$-theory functor $FK$ sends each $C^{*}$-algebra $A$ over $X$
to the $K$-groups of the subquotients of $A$ corresponding to locally closed subsets of
$X$, together with all the natural transformations between these groups. Morphisms
between $FK(A)$ and $FK(B)$ respect the natural transformations induced by $KK$ elements
between the subquotients of $A$ and $B$ corresponding to any given locally closed subset
of $X$. For details, see~\cite{MeyerNest:CJM12}.

\begin{prp}\label{prp:Enders technique}
Suppose that $(A,\phi)$ is a tight $C^*$-algebra over a finite topological space $X$.
Suppose that $A$ is stable, nuclear, separable and purely infinite. Suppose that there is
a sequence $\kappa_m$ of full $X$-equivariant homomorphisms $\kappa_m : A \to A$ such
that
\begin{enumerate}
\item $FK(\kappa_m) = m \cdot FK(\id_A)$ and
\item the sequence $(\kappa_{m})$ has an asymptotic order-$n$ factorization
	through elements of $\mathcal{C}$.
\end{enumerate}
Then $\id_{A}$ has an asymptotic order-$n$ factorization through elements of
$\mathcal{C}$.
\end{prp}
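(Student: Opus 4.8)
The plan is to imitate Enders' argument for $\Oo_\infty$ in the present $C^*$-algebra-over-$X$ setting, using Theorem~\ref{thm:KK<->aua} in place of the Kirchberg--Phillips uniqueness theorem. First, I would observe that since $FK$ classifies $X$-equivariant $KK$ up to natural isomorphism for separable nuclear $C^*$-algebras over a finite space (this is the Meyer--Nest theory invoked via \cite{MeyerNest:CJM12, Kirchberg}), the hypothesis $FK(\kappa_m) = m\cdot FK(\id_A)$ means $KK(X;\kappa_m)$ is multiplication by $m$ on $KK(X;A,A)$, viewed as a ring. Since $A$ is stable, nuclear, separable and purely infinite (hence $\Oo_\infty$-absorbing and $\Kk$-stable, so $A \cong A \otimes \Oo_\infty \otimes \Kk$ in an $X$-equivariant way), every $\kappa_m$ is a full $X$-equivariant endomorphism of a $C^*$-algebra of the form $A' \otimes \Oo_\infty \otimes \Kk$ to which Theorem~\ref{thm:KK<->aua} applies.

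The key step is to manufacture, for suitable $m$, an endomorphism inducing the identity in $KK(X;\cdot)$ out of the $\kappa_m$'s and a fixed $X$-equivariant automorphism. Following Enders, I would use that $-1$ acts invertibly in the ring $KK(X;A,A)$ (indeed $(m+1)\cdot(-1) + m\cdot 1$ could be arranged, but more simply: $-\id_A$ is an $X$-equivariant $KK$-equivalence, so by Theorem~\ref{thm:KK<->aua} there is an $X$-equivariant automorphism $\lambda$ of $A$ with $KK(X;\lambda) = -KK(X;\id_A)$), and then form, for each $m$, the direct-sum map $\Phi_m := \kappa_{m+1} \oplus (\lambda\circ\kappa_m) : A \to A \oplus A \hookrightarrow M_2(A) \cong A$. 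A computation in $KK(X;A,A)$ gives $KK(X;\Phi_m) = (m+1) - m = 1 = KK(X;\id_A)$, so $\Phi_m$ is a full $X$-equivariant homomorphism which, after identifying $M_2(A)$ with $A$ via an $X$-equivariant isomorphism, is $KK(X;\cdot)$-equivalent to $\id_A$. By the ``Consequently'' clause of Theorem~\ref{thm:KK<->aua}, there is a sequence of unitaries $u_k \in \Mm(A)$ with $\Ad u_k \circ \Phi_m(a) \to a$ for all $a$; combined with a diagonal/subsequence argument (as in the proof of Lemma~\ref{lem:order-zero-AF-FD}) this lets us pass from the sequence $(\Phi_m)$ to an honest asymptotic order-$n$ factorization of $\id_A$.

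Next I would assemble the factorization of $\id_A$ from the factorizations of the $\kappa_m$. Write $(F_m^{(i)}, \psi_m^{(i)}, \varphi_m^{(i)})$ for the order-$n$ factorizations of $\kappa_{m+1}$ and $\kappa_m$ (reindexing as needed). Since a direct sum of two order-$n$ factorizations through $\mathcal{C}$ is again an order-$n$ factorization through $\mathcal{C}$ (the summands $F^{(i)}$ just occur with multiplicity two, and $\mathcal{C}$ is closed under the relevant operations in all our applications), the pair built from $\psi_{m+1} \oplus (\text{something})$ and $\varphi_{m+1} \oplus (\lambda\circ\varphi_m)$, landing in $M_2(A)\cong A$, gives an asymptotic order-$n$ factorization of the sequence $(\Phi_m)$ through elements of $\mathcal{C}$. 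Conjugating the $\varphi$-maps by the unitaries $u_m$ from the previous paragraph — which are honest homomorphisms $\Ad u_m$ of $A$ and so preserve the order-zero and complete-positivity properties — turns this into an asymptotic order-$n$ factorization of $\id_A$ through $\mathcal{C}$, after one more $\varepsilon/3$-and-diagonal cleanup to absorb the error $\|\Ad u_m \circ \Phi_m(a) - a\| \to 0$.

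The main obstacle I anticipate is bookkeeping around the identifications $A \cong A \otimes \Oo_\infty \otimes \Kk \cong M_2(A)$: one must check that these can be chosen $X$-equivariantly and that $\kappa_{m+1} \oplus (\lambda\circ\kappa_m)$, after transport through them, is still a \emph{full} $X$-equivariant homomorphism so that Theorem~\ref{thm:KK<->aua} genuinely applies, and that the condition $\phi \oplus \phi \aua \phi$ (needed to invoke Lemma~\ref{lem:magic aua} inside that theorem) is available — here the hypothesis that $A$ is properly infinite and $\Oo_\infty$-absorbing is doing the work, via Lemma~\ref{lem:jXA properties}. The second delicate point is verifying that fullness of each $\kappa_m$, together with $FK(\kappa_m) = m\cdot FK(\id_A)$, really does force $KK(X;\kappa_m)$ to be multiplication by $m$ as an element of the endomorphism ring of the $KK(X)$-object $A$; this is where the equivalence of categories between $FK$ and a suitable localization of the $KK(X)$-category (Meyer--Nest) is essential, and I would cite it rather than reprove it. Everything downstream — the direct-sum and conjugation manipulations and the diagonal arguments — is routine in the style of Lemma~\ref{lem:order-zero-AF-FD}.
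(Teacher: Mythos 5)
Your overall strategy is the one the paper uses: realize $-FK(\id_A)$ by an automorphism $\lambda$ via Kirchberg's classification, form the Cuntz sum $\beta_m = \kappa_{m+1}\oplus(\lambda\circ\kappa_m)$, show it is approximately unitarily equivalent to $\id_A$ via Theorem~\ref{thm:KK<->aua}, and then transport the order-$n$ factorizations of $\kappa_{m+1}$ and $\kappa_m$ through the isometries, $\lambda$, and the implementing unitaries. But there is a genuine gap at exactly the step you flag as delicate: to invoke Theorem~\ref{thm:KK<->aua} you need $KK(X;\Phi_m) = KK(X;\id_A)$, and you propose to get it from $FK(\kappa_m) = m\cdot FK(\id_A)$ by citing an ``equivalence of categories between $FK$ and a localization of $KK(X)$.'' No such equivalence exists for a general finite space $X$: filtrated $K$-theory is not a faithful (nor even a complete) invariant on the $X$-equivariant bootstrap class --- Meyer and Nest themselves exhibit counterexamples over a four-point space --- and the proposition is stated, and applied in Theorem~\ref{thm:pi graph alg}, for arbitrary finite $X$. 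So the equality $KK(X;\kappa_m) = m\cdot KK(X;\id_A)$, and hence your computation $KK(X;\Phi_m) = (m+1)-m = KK(X;\id_A)$, cannot be deduced from the hypothesis on $FK$.

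The paper circumvents this with one extra move that your argument must incorporate. From $FK(\beta_m) = (m+1)FK(\id_A) - m\, FK(\id_A) = FK(\id_A)$ one concludes only that $FK(\beta_m)$ is an isomorphism; \cite[Proposition~4.15]{MeyerNest:MJM09} (which says that $FK$ \emph{reflects} invertibility --- a much weaker statement than faithfulness) then gives that $KK(X;\beta_m)$ is a $KK(X;\cdot)$-equivalence. A second application of \cite[Folgerung~4.3]{Kirchberg} produces an automorphism $\gamma_m$ of $A$ with $KK(X;\gamma_m\circ\beta_m) = KK(X;\id_A)$, and it is $\gamma_m\circ\beta_m$ --- not $\beta_m$ --- that Theorem~\ref{thm:KK<->aua} shows to be approximately unitarily equivalent to $\id_A$. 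Correspondingly, the upward maps of the final factorization must be conjugated by $\Ad(u_m)\circ\gamma_m$ rather than by $\Ad(u_m)$ alone; since this is a $*$-homomorphism it preserves complete positivity, contractivity and the order-zero property, so the rest of your argument (direct sums of factorizations, the $\varepsilon/3$ and diagonal arguments) goes through unchanged. The remaining differences --- you pass through $M_2(A)\cong A$ where the paper uses isometries $s_1, s_2 \in \Mm(A)$ with $s_1s_1^* + s_2s_2^* = 1$ supplied by stability --- are immaterial.
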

\begin{proof}
Using \cite[Theorem~4.3]{TomsWinter:TAMS07} and
\cite[Theorem~3.15]{KirchbergPhillips:Crelles00}, and a simple induction argument, one
can show that every separable, nuclear, purely infinite $C^*$-algebra with finitely many
ideals is $\mathcal{O}_\infty$-absorbing.  Thus, $A \cong A \otimes
\mathcal{O}_{\infty}$.  Let $(F_m, \psi_m, \varphi_m)$ be an asymptotic order-$n$
factorization through elements of $\mathcal{C}$.  The element $x = -KK(X;\id_A)$ is a
$KK$-equivalence, so \cite[Folgerung~4.3]{Kirchberg} implies that there is an
automorphism $\lambda$ of $A$ such that $KK(X; \lambda) = -KK(X; \id_A)$. We then have
$FK(\lambda) = -FK(\id_A)$ because the filtered $K$ functor is compatible with $KK$.
Since $A$ is stable, $\Mm(A)$ contains isometries $s_1$ and $s_2$ such that $s_1s^*_1 +
s_{2} s_{2}^{*} = 1$. Define $\beta_m : A \to A$ by $\beta_m(a) = s_1 \kappa_{m+1}(a)
s^*_1 + s_2 \lambda (\kappa_m(a)) s^*_2$. Since each $\kappa_l$ is a full $X$-equivariant
homomorphism, the same is true of each $\beta_m$, and we have
\[
FK(\beta_m)
    = FK(\kappa_{m+1}) + FK(\lambda \circ \kappa_m)
    = (m+1)FK(\id_A) - mFK(\id_A)
    = FK(\id_A).
\]
Hence \cite[Proposition~4.15]{MeyerNest:MJM09} implies that $KK(X;\beta_m)$ is a
$KK(X;\cdot)$-equivalence. Now \cite[Folgerung~4.3]{Kirchberg} implies that there is an
automorphism $\gamma_m$ of $A$ such that $KK(X;\gamma_m \circ \beta_m) = KK(X; \id_A)$.
Theorem~\ref{thm:KK<->aua} then implies that $\gamma_m \circ \beta_m \approx_{u} \id_A$; fix a
sequence $u_n \in \mathcal{M} ( A )$ of unitaries implementing the approximate unitary equivalence, so
$\|u_n \gamma_n \circ \beta_n(a) u^*_n - a\| \to 0$ for each $a \in A$.

Write $\Psi_m := \psi_m \oplus \psi_{m+1} : A \to F_m \oplus F_{m+1}$; this is a
completely positive contraction because each $\psi_i$ is. Define $\Phi_m : F_m \oplus
F_{m+1} \to A$ by
\[
\Phi_m(a,b) = u_m\gamma_m\big(s_1\varphi_{m+1}(b)s^*_1
    + s_2\lambda(\varphi_m(a))s^*_2\big)u^*_m.
\]
Each of $\phi_m$ and $\varphi_{m+1}$ is completely positive by definition, and then
$\lambda \circ \varphi_m$ is completely positive too because $\lambda$ is a homomorphism.
Conjugation by an isometry is likewise completely positive, so $(a,b) \mapsto
s_1\varphi_{m+1}(b)s^*_1 + s_2\lambda(\varphi_m(a))s^*_2$ is also completely positive.
Since $\Ad(u_m) \circ \gamma_{m}$ is a homomorphism, it follows that $\Phi_m$ is
completely positive. Since $\lambda$, $\Ad(u_m)$ and $\gamma_{m}$ are homomorphisms, and
therefore norm-decreasing, and since $s_1 s^*_1 \perp s_2 s^*_2$, we have
$\|\Phi_m(a,b)\| = \max\{\|\varphi_{m+1}(b)\|, \|\varphi_m(a)\|\}$; since $\varphi_{m+1}
\oplus \varphi_m$ is contracting on each $F_m^{(k)} \oplus F_{m+1}^{(k)}$, the same is
true of $\Phi_m$. Fix $k \in \{0,1, \dots, n\}$ and suppose that $a = (a_m, a_{m+1})$ and
$b = (b_m, b_{m+1})$ are orthogonal elements of $F_m^{(k)} \oplus F_{m+1}^{(k)} \subseteq
F_m \oplus F_{m+1}$; so $a_i b_i = b_ia_i = 0$ for $i = m,m+1$. Since $s_i^*s_j =
\delta_{i,j} 1$, we have
\[
\gamma_m^{-1}(u_m^*\Phi_m(a)\Phi_m(b)u_m)
    = s_1 \varphi_{m+1}(a_{m+1})\varphi_{m+1}(b_{m+1}) s^*_1
        + s_2 \lambda(\varphi_m(a_m)\varphi_m(b_m)) s^*_2 = 0.
\]
Since $\gamma_m^{-1}$ and $\Ad(u_m^*)$ are automorphisms, we deduce that
$\Phi_m(a)\Phi_m(b) = 0$, and symmetrically for $\Phi_m(b)\Phi_m(a)$. So $\Phi_m$
restricts to an order-0 contraction on $F_m^{(k)} \oplus F_{m+1}^{(k)}$.

Fix $a \in A$. It now suffices to show that $\|\Phi_m \circ \Psi_m(a) - a\| \to 0$. By
definition of $\Phi_m$ and $\Psi_m$, we have
\[
\Phi_m(\Psi_m(a))
    = u_m\gamma_m\big(s_1\varphi_{m+1}(\psi_{m+1}(a))s^*_1
        + (s_2\lambda(\varphi_m(\psi_m(a)))s^*_2)\big)u^*_m.
\]
Hence
\begin{align*}
\|\Phi_m&(\Psi_m(a)) - a\|\\
     &\le \big\|\Phi_m(\Psi_m(a))
        - u_m\gamma_m\big(s_1 \kappa_{m+1}(a) s^*_1 + s_2 \lambda(\kappa_m(a))\big)u_m^*\big\|\\
     &\hskip2cm{}+ \big\|u_m\gamma_m\big(s_1 \kappa_{m+1}(a) s^*_1 + s_2 \lambda(\kappa_m(a))\big)u_m^*
            - a\big\|\\
     &= \Big\|u_m\gamma_m\Big(s_1 \big(\varphi_{m+1}\circ\psi_{m+1}(a) - \kappa_{m+1}(a)\big) s^*_1
                + s_2\lambda\big(\varphi_m\circ\psi_m(a) - \kappa_m(a)\big)s^*_2\Big)u_m^*\Big\|\\
        &\hskip2cm{}+ \|u_m\gamma_m\circ \beta_m(a)u_m^*
            - a\|\\
     &= \max\big\{\|\varphi_{m+1}\circ\psi_{m+1}(a) - \kappa_m(a)\|,
                    \|\varphi_m\circ\psi_m(a) - \kappa_m(a)\|\big\} \\
        &\hskip2cm{}+ \|u_m\gamma_m\circ \beta_m(a)u_m^* - a\| \\
     &\to 0,
\end{align*}
completing the proof.
\end{proof}

\begin{proof}[Proof of Theorem~\ref{thm:pi graph alg}]
Let $E'$ be a Drinen-Tomforde desingularization of $E$ as in
\cite{DrinenTomforde:RMJM05}, and let $G$ be a graph as in \cite{Tomforde:PAMS04} such
that $C^*(G) \cong C^*(E') \otimes \Kk$. The nuclear dimension of $C^*(G)$ is equal to
that of $C^*(E)$ by \cite[Corollary~2.8(i)]{WinterZacharias:AM10}. Since $C^*(E)$ is not
an AF-algebra, the nuclear dimension of $C^{*} (E)$ (and hence the nuclear dimension of
$C^*(G)$) is at least 1 (see \cite[Remarks~2.2(iii)]{WinterZacharias:AM10}). We must show
that it is at most 1.

Since $G$ is row-finite, Corollary~\ref{cor:order-zero-approximation} implies that
$\tilde{\iota}_m$ has an asymptotic order-$1$ factorization through finite dimensional
$C^{*}$-algebras. Using again that $G$ is row-finite and has no sinks, we can apply
Proposition~\ref{prp:E(m)->ExK} to obtain a map $j_m : C^*(G(m)) \to C^*(G) \otimes
\Kk_{G^{<m}}$ satisfying~\eqref{eq:jsubmdef}.  Since $C^*(G)$ is stable, there is an
isomorphism $\rho : C^*(G) \otimes \Kk_{G^{<m}} \to C^*(G)$ such that $FK(\rho) = \id$.
By the construction of $\tilde{\iota}_{m}$ and $j_{m}$, the map $j_{m} \circ
\tilde{\iota}_{m}$ is an $X$-equivariant homomorphism and $j_{m} \circ
\tilde{\iota}_{m}(a)$ is full in $C^{*}(E)(U)$ whenever $a$ is full in $C^{*} (E)(U)$.
Thus, $j_{m} \circ \widetilde\iota_{m}$ is a full $X$-equivariant homomorphism.
Lemma~\ref{lem:respects ideals} implies that the maps $\kappa_m := \rho \circ j_m \circ
\widetilde\iota_m : C^*(G) \to C^*(G)$ satisfy $FK(\kappa_m) = m\cdot FK(\id_{C^*(G)})$.
Since $j_{m}$ and $\rho$ are $*$-homomorphisms, $\kappa_m$ has an asymptotic order-$1$
factorization through finite dimensional $C^{*}$-algebras.  So
Proposition~\ref{prp:Enders technique} implies that $\id_{C^{*} (G)}$ has an asymptotic
order-$1$ factorization through finite dimensional $C^{*}$-algebras, and thus the nuclear
dimension of $C^*(G)$ is at most 1.
\end{proof}

\begin{cor}
Let $X$ be an accordion space as defined in \cite{BentmannKohler:xx11} and let $A$ be a
separable, nuclear, purely infinite, tight $C^{*}$-algebra over $X$. Suppose that
$K_{1} ( A(x) )$ is free abelian and that $A(x)$ is in the Rosenberg-Schochet bootstrap
category $\mathcal{N}$ for each $x \in X$.  Then the nuclear dimension of $A$ is $1$.
\end{cor}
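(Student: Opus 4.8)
The plan is to deduce this from Theorem~\ref{thm:pi graph alg} by realising $A$, up to stable isomorphism, as a purely infinite graph $C^*$-algebra with finitely many ideals. First I would record the two easy reductions. An accordion space is by construction a finite topological space, so $\Prim(A)\cong X$ is finite and hence $A$ has only finitely many ideals. Also $A$ is nonzero and purely infinite, hence not approximately finite dimensional, so $\nucdim(A)\ge 1$ by \cite[Remarks~2.2(iii)]{WinterZacharias:AM10}; it remains to prove $\nucdim(A)\le 1$.

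Next I would invoke the classification and range results of Arklint, Bentmann and Katsura \cite{ArklintBentmannKatsura:xx13inv, ArklintBentmannKatsura:xx13range}. Since $A$ is separable, nuclear, purely infinite and tight over the accordion space $X$, and since each simple subquotient $A(x)$ lies in the bootstrap class $\mathcal{N}$ and has free abelian $K_1$, these results show that $A$ is stably isomorphic to $C^*(E)$ for some row-finite directed graph $E$. Concretely, one uses that on this class filtered $K$-theory over $X$ (with its order and coefficient data) is a complete invariant, and that every value of the invariant arising here is attained by a row-finite graph algebra. This appeal to external classification machinery is the only substantive ingredient, and correspondingly the one point that must be checked carefully: namely, that ``tight and purely infinite over an accordion space, with bootstrap fibres of free $K_1$'' is exactly the hypothesis under which the Arklint--Bentmann--Katsura theorems apply and deliver a \emph{row-finite} graph.

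Finally I would transport the structural hypotheses across the stable isomorphism $A\otimes\Kk\cong C^*(E)\otimes\Kk$: stabilisation preserves pure infiniteness and induces a lattice isomorphism of ideals, so $C^*(E)$ is purely infinite with finitely many ideals. Theorem~\ref{thm:pi graph alg} then gives $\nucdim(C^*(E))=1$. Since nuclear dimension is preserved by stabilisation \cite[Corollary~2.8(i)]{WinterZacharias:AM10}, we conclude
\[
\nucdim(A)=\nucdim(A\otimes\Kk)=\nucdim(C^*(E)\otimes\Kk)=\nucdim(C^*(E))=1,
\]
as required. Apart from the invocation of the Arklint--Bentmann--Katsura results, every step is routine bookkeeping with standard permanence properties of nuclear dimension, pure infiniteness, and ideal lattices under stabilisation.
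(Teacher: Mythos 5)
Your proposal is correct and follows essentially the same route as the paper: the paper's proof likewise invokes \cite[Corollary~7.16]{ArklintBentmannKatsura:xx13inv} (see also \cite{ArklintBentmannKatsura:xx13range}) to produce a row-finite graph $E$ with $C^*(E)\otimes\Kk\cong A\otimes\Kk$, and then concludes via \cite[Corollary~2.8(i)]{WinterZacharias:AM10} and Theorem~\ref{thm:pi graph alg}. Your additional bookkeeping (the lower bound $\nucdim(A)\ge 1$ and the transport of pure infiniteness and the finite ideal lattice across the stable isomorphism) is implicit in the paper's terser argument but entirely consistent with it.
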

\begin{proof}
By \cite[Corollary~7.16]{ArklintBentmannKatsura:xx13inv} (see also
\cite{ArklintBentmannKatsura:xx13range}), there exists a row-finite directed graph $E$
such that $C^*(E) \otimes \mathcal{K} \cong A \otimes \mathcal{K}$. Now
\cite[Corollary~2.8(i)]{WinterZacharias:AM10} and Theorem~\ref{thm:pi graph alg} imply
that the nuclear dimension of $A$ is $1$.
\end{proof}

\section{Purely infinite graph $C^*$-algebras with infinitely many ideals}\label{sec:consequences}
In \cite{JeongPark:JFA02}, Jeong and Park show how to describe the $C^*$-algebra of a
row-finite graph with no sources as a direct limit of $C^*$-algebras of graphs with
finitely many ideals. Here we adapt their technique to see that if $E$ satisfies
Condition~(K) and every vertex of $E$ connects to a cycle, then $C^*(E)$ has nuclear
dimension at most~2.

By a \emph{first-return path} at a vertex $v$ in a directed graph $E$, we mean a path
$e_1 \dots e_n$ such that $r(e_1) = s(e_n) = v$ and $r(e_i) \not= v$ for $i \ge 2$.
Recall that a graph $E$ satisfies Condition~(K) if, whenever there is a first-return path
in $E$ at $v$, there are at least two distinct first-return paths at $v$ (see
\cite[Section~6]{KPRR}).

\begin{thm}\label{thm:infgraphs} Let $E$ be a graph that satisfies Condition (K), and
suppose that each vertex of $E$ connects to a cycle in $E$. Then $1 \le \nucdim(C^*(E))
\le \nucdim(\Tt C^*(E)) \le 2$.
\end{thm}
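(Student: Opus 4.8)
The plan is to reduce Theorem~\ref{thm:infgraphs} to the finitely-many-ideals case handled by Theorem~\ref{thm:pi graph alg} (and the approximation results of Section~\ref{sec:approximation}) via a direct-limit decomposition in the spirit of Jeong--Park \cite{JeongPark:JFA02}. First I would dispose of the two outer inequalities. The bound $1 \le \nucdim(C^*(E))$ holds because $C^*(E)$ is not AF: the hypothesis that every vertex connects to a cycle, together with Condition~(K), forces $C^*(E)$ to be purely infinite (indeed every quotient contains a cycle with an entry), so it is certainly not AF and hence has nuclear dimension at least~$1$ by \cite[Remarks~2.2(iii)]{WinterZacharias:AM10}. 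The inequality $\nucdim(C^*(E)) \le \nucdim(\Tt C^*(E))$ follows because $C^*(E)$ is a quotient of $\Tt C^*(E)$ and nuclear dimension does not increase under quotients \cite[Proposition~2.3(ii) or Corollary~2.8]{WinterZacharias:AM10}. So the real content is $\nucdim(\Tt C^*(E)) \le 2$.

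For the main inequality, I would follow the Jeong--Park strategy of writing $\Tt C^*(E)$, or a suitable stabilization, as an inductive limit $\varinjlim \Tt C^*(E_n)$ (or $\varinjlim C^*(F_n)$) where each $E_n$ (respectively $F_n$) is a finite-ideal graph, for instance by truncating $E$ to successively larger finite subgraphs or by the ``desingularize and delay'' construction that replaces long detours by finitely many cycle classes. One may first reduce to the row-finite, no-sources (indeed no-sinks) situation: a Drinen--Tomforde desingularization \cite{DrinenTomforde:RMJM05} together with the Morita-equivalence trick of \cite{Tomforde:PAMS04} replaces $E$ by a row-finite graph with no sinks whose $C^*$-algebra is $C^*(E)\otimes\Kk$, and \cite[Corollary~2.8(i)]{WinterZacharias:AM10} shows this does not change the nuclear dimension; Condition~(K) and the ``connects to a cycle'' property are preserved. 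Then Jeong--Park's argument exhibits $C^*(E)$ as an increasing union of (full corners of) graph $C^*$-algebras of graphs satisfying Condition~(K) with only finitely many saturated hereditary sets, hence finitely many ideals. The analogous statement for the Toeplitz algebras $\Tt C^*(E_n) \hookrightarrow \Tt C^*(E)$ should be extracted the same way, since the constructions in Section~\ref{sec:approximation} are carried out at the Toeplitz level.

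The point is then that each finite-ideal building block has nuclear dimension at most~$2$: when the block is purely infinite with finitely many ideals, Theorem~\ref{thm:pi graph alg} gives nuclear dimension~$1$ for the quotient algebra, and the Toeplitz algebra $\Tt C^*(E_n)$ fits in the extension $0 \to \bigoplus_v \Kk_{E_n^* v} \to \Tt C^*(E_n) \to C^*(E_n) \to 0$ (using Lemma~\ref{lem:compacts}), so \cite[Proposition~2.9]{WinterZacharias:AM10} on extensions bounds $\nucdim(\Tt C^*(E_n))$ by $\nucdim(\text{ideal}) + \nucdim(\text{quotient}) + 1 \le 0 + 1 + 1 = 2$; alternatively one uses directly the order-$1$ factorizations of $\tilde\iota_m$ through finite-dimensional algebras from Corollary~\ref{cor:order-zero-approximation} combined with an order-zero term absorbing the compact ideal. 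Finally, since nuclear dimension is continuous with respect to inductive limits \cite[Corollary~2.8(ii)]{WinterZacharias:AM10}, the limit $\Tt C^*(E) = \varinjlim \Tt C^*(E_n)$ has nuclear dimension at most~$2$, and passing to the quotient gives $\nucdim(C^*(E)) \le 2$ as well.

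The main obstacle I anticipate is making the Jeong--Park decomposition work \emph{at the Toeplitz level} and checking that the intermediate graphs $E_n$ genuinely have only finitely many ideals while still having every vertex connect to a cycle (so that their graph algebras are purely infinite and Theorem~\ref{thm:pi graph alg} applies); the bookkeeping of saturated hereditary sets under truncation, and the verification that the connecting maps are the expected inclusions of (full corners of) Toeplitz graph algebras rather than something more exotic, is where the care is needed. Everything downstream — the extension estimate and the inductive-limit permanence — is routine given the cited results of Winter--Zacharias.
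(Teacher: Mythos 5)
Your proposal is correct and follows essentially the same route as the paper: exhaust $\Tt C^*(E)$ by images of Toeplitz algebras of finite subgraphs $G_{V,F}$ satisfying Condition~(K) in which every vertex connects to a cycle, bound each $\nucdim(\Tt C^*(G_{V,F}))$ by $2$ via the extension with AF (hence nuclear-dimension-$0$) kernel and nuclear-dimension-$1$ quotient $C^*(G_{V,F})$ coming from Theorem~\ref{thm:pi graph alg}, and then pass to the direct limit. The one care point you flag --- whether the connecting maps are honest inclusions of Toeplitz algebras --- is dispatched in the paper simply by observing that the image of $\Tt C^*(G_{V,F})$ in $\Tt C^*(E)$ is a quotient of $\Tt C^*(G_{V,F})$, and quotients do not increase nuclear dimension.
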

\begin{proof}
Since $E$ contains cycles, $C^*(E)$ is not AF and therefore has nuclear dimension at
least 1 by \cite[Remark~2.2(iii)]{WinterZacharias:AM10}. Since $C^*(E)$ is a quotient of
$\Tt C^*(E)$, Proposition~2.9 of \cite{WinterZacharias:AM10} implies that
$\nucdim(C^*(E)) \le \nucdim(\Tt C^*(E))$. It remains to show that $\nucdim(\Tt C^*(E))
\le 2$.

Fix a finite set $V \subseteq E^0$ and a finite set $F \subseteq E^1$. We claim that
there exists a finite subgraph $G_{V,F}$ of $E$ such that $V \subseteq G_{V,F}^0$, $F
\subseteq G_{V,F}^1$, every vertex in $G_{V,F}$ connects to a cycle in $G_{V,F}$, and
$G_{V,F}$ satisfies Condition~(K). Since every vertex in $E$ connects to a cycle, there
exists, for each $v \in V \cup r(F)$, a path $\lambda^v = \lambda^v_1 \dots
\lambda^v_{|\lambda^v|}$ in $vE^*$ such that $s(\lambda^v_n) = r(\lambda^v)$ for some $n
\le |\lambda^v|$. Let $G$ be the subgraph of $E$ given by $G_0^1 := F \cup \{\lambda^v_i
: v \in V \cup r(F), i \le |\lambda^v|\}$, and let $G_0^0 := r(G_0^1) \cup s(G_0^1)$. By
construction, every vertex in $G_0$ connects to a cycle in $G_0$. For each $w \in G_0^0$
that lies on exactly one cycle, say $\mu^w$, in $G_0$, condition~(K) in $E$ implies that
there exists $n \le |\mu^w|$ and a cycle $\nu^w$ in $E$ based at $s(\mu^w_n)$ such that
$\nu^w_1 \not= \mu^w_n$. We let $G_{V,F}^1 = G_0^1 \cup \{s(\nu^w_i) : w\text{ lies on
exactly one cycle in $G_0$}, 1 \le i \le |\nu^w|\}$, and $G_{V,F}^0 = r(G_{V,F}^1) \cup
s(G_{V,F}^1)$. Then $G_{V,F}$ has all the desired properties, establishing the claim.

Proposition~5.3 of \cite{BatesPaskEtAl:NYJM00} implies that $C^*(G_{V,F})$ is purely
infinite, and \cite[Theorem~4.4]{BatesPaskEtAl:NYJM00} implies that it has finitely many
ideals. So Theorem~\ref{thm:pi graph alg} implies that $C^*(G_{V,F})$ has nuclear
dimension 1.

The kernel of the quotient map $q : \Tt C^*(G_{V,F}) \to C^*(G_{V,F})$ is AF. (This is
well-known, but to verify it, recall that the path-space representation $\pi_{T,Q}$ of
$\Tt C^*(E)$ on $\ell^2(E^*)$ is faithful by \cite[Theorem~4.1]{FowlerRaeburn:IUMJ99},
and observe that each $\pi_{T,Q}(q_v - \sum_{e \in vE^1} t_e t^*_e)$ is the rank-one
projection onto the basis element $\delta_v$, so that $\pi_{T,Q}(\ker q)$ is contained in
$\Kk_{E^*}$.) Hence Remark~2.2 of \cite{WinterZacharias:AM10} (see also Example~4.1 of
\cite{KirchbergWinter}) implies that $\nucdim(\ker q) = 0$. So $\Tt C^*(G_{V,F})$ is an
extension of an algebra of nuclear dimension~1 by an algebra of nuclear dimension~0, and
\cite[Proposition~2.9]{WinterZacharias:AM10} then implies that $\Tt C^*(G_{V,F})$ has
nuclear dimension at most $2$.

The generators $\{q_v : v \in G_{V,F}^0\} \cup \{t_f : f \in G_{V,F}^1\}$ constitute a
Toeplitz-Cuntz-Krieger $G_{V,F}$-family in $\Tt C^*(E)$, and so induce a homomorphism
$\iota_{G_{V,F}} : \Tt C^*(G_{V,F}) \to \Tt C^*(E)$. Since the preceding paragraph
implies that $\Tt C^*(G_{V,F})$ has nuclear dimension at most 2, and since
$\iota_{G_{V,F}}(\Tt C^*(G_{V,F}))$ is isomorphic to a quotient\footnote{In fact, [12,
Theorem 4.1] can be used to show that $\iota_{G, V}$ is injective, but this is not
necessary for our argument.} of $\Tt C^*(G_{V,F})$, Proposition~2.9 of
\cite{WinterZacharias:AM10} implies that $\iota_{G,V}(\Tt C^*(G_{V,F}))$ also has nuclear
dimension at most 2. Hence $\Tt C^*(E) = \overline{\bigcup_{V,F} \iota_{G_{V,F}}(\Tt
C^*(G_{V,F}))}$ is a direct limit of $C^*$-algebras of nuclear dimension at most 2, and
then \cite[Proposition~2.3(iii)]{WinterZacharias:AM10} implies that $\Tt C^*(E)$ has
nuclear dimension at most~2.
\end{proof}

\begin{cor}\label{cor:old pi nucdim}
Suppose that $E$ is a directed graph such that $C^*(E)$ is purely infinite. Then
$\nucdim(C^*(E)) \le 2$.
\end{cor}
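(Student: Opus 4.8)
The plan is to reduce the statement to Theorem~\ref{thm:infgraphs} by passing to a row-finite model of the stabilization of $C^*(E)$, exactly as at the start of the proof of Theorem~\ref{thm:pi graph alg}. First I would let $E'$ be a Drinen--Tomforde desingularization of $E$ as in \cite{DrinenTomforde:RMJM05}, and let $G$ be a graph, as in \cite{Tomforde:PAMS04}, with $C^*(G) \cong C^*(E') \otimes \Kk$. Then $G$ is row-finite, $\nucdim(C^*(G)) = \nucdim(C^*(E))$ by \cite[Corollary~2.8(i)]{WinterZacharias:AM10}, and, since pure infiniteness in the sense of Kirchberg and R{\o}rdam is a Morita-invariant (hence stable) property, $C^*(G) \cong C^*(E') \otimes \Kk$ is again purely infinite. (If $E$ is already row-finite this reduction is unnecessary.)

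Next I would record, from the structure theory of graph $C^*$-algebras, that the row-finite graph $G$ satisfies Condition~(K) and that every vertex of $G$ connects to a cycle in $G$: these two graph-theoretic conditions together are precisely what characterizes pure infiniteness of $C^*(G)$, by Hong and Szyma\'nski's description of purely infinite graph algebras. Alternatively, one argues directly: if some vertex of $G$ connected to no cycle, then the corresponding hereditary set of vertices would give $C^*(G)$ a nonzero AF ideal; and if $G$ failed Condition~(K), then $C^*(G)$ would have a nonzero subquotient containing a unital copy of $C(\TT)$; either alternative contradicts pure infiniteness of $C^*(G)$, since pure infiniteness passes to ideals and quotients. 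With Condition~(K) and the cycle condition in hand, Theorem~\ref{thm:infgraphs} applies to $G$, giving $\nucdim(C^*(G)) \le \nucdim(\Tt C^*(G)) \le 2$; hence $\nucdim(C^*(E)) = \nucdim(C^*(G)) \le 2$.

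The permanence assertions in the first paragraph --- that pure infiniteness and nuclear dimension survive desingularization and stabilization --- are routine. The substantive point, and the step I expect to require the most care, is the second one: translating the purely $C^*$-algebraic hypothesis that $C^*(G)$ is purely infinite into the combinatorial hypotheses required by Theorem~\ref{thm:infgraphs}. For this I would appeal to the known characterization of purely infinite graph $C^*$-algebras rather than reprove it from scratch.
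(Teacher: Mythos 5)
Your proof is correct and takes essentially the same route as the paper: the paper's proof simply invokes (b)$\Rightarrow$(e) of \cite[Theorem~2.3]{HongSzymanski:BLMS03} to convert pure infiniteness of $C^*(E)$ into Condition~(K) plus the condition that every vertex connects to a cycle, and then applies Theorem~\ref{thm:infgraphs}. The only difference is that your preliminary desingularization/stabilization step is superfluous, since Theorem~\ref{thm:infgraphs} is stated for arbitrary graphs and the Hong--Szyma{\'n}ski characterization applies to $E$ directly.
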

\begin{proof}
By (b)~implies~(e) of \cite[Theorem~2.3]{HongSzymanski:BLMS03}, the graph $E$ satisfies
Condition~(K) and every vertex in $E$ connects to a cycle. So the result follows from
Theorem~\ref{thm:infgraphs}.
\end{proof}

\section{Quasidiagonal extensions and nuclear dimension}\label{sec:qdext}

In this section we show that the nuclear dimension of a quasidiagonal extension $0 \to I
\to A \to A/I \to 0$ is equal to the maximum of the nuclear dimension of $I$ and the
nuclear dimension of $A/I$. By showing that ideals in graph $C^*$-algebras for which the
quotient is AF are quasidiagonal (we are indebted to James Gabe for this argument), we
deduce the following extension of Theorem~\ref{thm:pi graph alg}.

\begin{thm}\label{thm:AF-ext}
Suppose that $E$ is a directed graph, that $I$ is a purely infinite gauge-invariant ideal
of $C^*(E)$, and that $C^*(E)/I$ is AF. Then $\nucdim(C^*(E)) \le 2$. If $I$ has
finitely many ideals, then $\nucdim(C^*(E)) = 1$.
\end{thm}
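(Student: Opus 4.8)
The plan is to derive the theorem from two auxiliary results about quasidiagonal extensions --- a permanence statement for nuclear dimension and a quasidiagonality statement specific to graph algebras --- and then invoke the purely infinite results already proved. After replacing $E$ by a Drinen-Tomforde desingularization \cite{DrinenTomforde:RMJM05} and stabilizing (which changes neither $\nucdim(C^*(E))$ nor the hypotheses on $I$, and preserves the quasidiagonality of the extension below, since an increasing approximate unit of projections in an ideal stays asymptotically central after tensoring with $\Kk$), one may assume $E$ is row-finite with no sinks; then $I = I_H$ for a saturated hereditary $H \subseteq E^0$, the quotient $C^*(E)/I \cong C^*(E\setminus EH)$ is AF, and $I_H$ has a full corner isomorphic to $C^*(EH)$ \cite[Lemma~1.1]{BatesPaskEtAl:NYJM00}, so $I_H$ is stably isomorphic to $C^*(EH)$, which is again purely infinite and, in the second case, has finitely many ideals. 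Granting (A) that $\nucdim(A) \le \max\{\nucdim(I),\nucdim(A/I)\}$ for every quasidiagonal extension $0 \to I \to A \to A/I \to 0$ of separable $C^*$-algebras (Definition~\ref{dfn:qde}), and (B) that the extension $0 \to I \to C^*(E) \to C^*(E)/I \to 0$ is quasidiagonal when $C^*(E)/I$ is AF, the theorem follows: $\nucdim(C^*(E)/I) = 0$ as it is AF \cite[Remark~2.2(iii)]{WinterZacharias:AM10}, so $\nucdim(C^*(E)) \le \nucdim(I) = \nucdim(C^*(EH))$ by \cite[Corollary~2.8(i)]{WinterZacharias:AM10}; Corollary~\ref{cor:old pi nucdim} bounds this by $2$, and Theorem~\ref{thm:pi graph alg} improves it to $1$ when $I$ has finitely many ideals. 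Both parts genuinely need (A), since the naive extension estimate $\nucdim(I)+\nucdim(A/I)+1$ only gives $3$ and $2$. Finally, in the second case $C^*(E)$ contains the nonzero purely infinite ideal $I$, hence is not AF, so $\nucdim(C^*(E)) \ge 1$ by \cite[Remark~2.2(iii)]{WinterZacharias:AM10} and the value is exactly $1$.

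For (A) I would argue as follows. Let $(p_n)$ be an increasing approximate identity of projections in $I$ that is asymptotically central in $A$, set $d = \max\{\nucdim(I),\nucdim(A/I)\}$, and fix a finite $\mathcal F \subseteq A$ and $\epsilon>0$. Choose a completely positive contractive approximation $(F_2,\psi_2,\varphi_2)$ of $\id_{A/I}$ that is $\epsilon$-good on $\pi(\mathcal F)$, with $F_2 = \bigoplus_{i=0}^d F_2^{(i)}$ and each $\varphi_2^{(i)}$ of order zero (padding with zero summands). Using projectivity of the cones $C_0((0,1])\otimes F_2^{(i)}$ over finite-dimensional $C^*$-algebras, lift each $\varphi_2^{(i)}$ to an order-zero completely positive contraction $\widetilde\varphi_2^{(i)} : F_2^{(i)}\to A$ with $\pi\circ\widetilde\varphi_2^{(i)}=\varphi_2^{(i)}$, and write $\widetilde\varphi_2 = \bigoplus_i\widetilde\varphi_2^{(i)}$; then for $a\in\mathcal F$ the element $\widetilde\varphi_2(\psi_2(\pi(a)))-a$ has norm less than $\epsilon$ in $A/I$, so it equals $w_a+x_a$ with $\|w_a\|<\epsilon$ and $x_a$ lying in a \emph{fixed} finite subset of $I$. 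Next pick $n$ so large that $p_n$ $\epsilon$-commutes with $\mathcal F$ and with $\widetilde\varphi_2$ applied to a basis of $F_2$, and that $\|(1-p_n)x_a(1-p_n)\|<\epsilon$ for all $a\in\mathcal F$ --- possible because the $x_a$ are fixed elements of $I$ and $(p_n)$ is an approximate unit of $I$. Finally choose a completely positive contractive approximation $(F_1,\psi_1,\varphi_1)$ of $\id_{p_nAp_n}$ that is $\epsilon$-good on $\{p_nap_n:a\in\mathcal F\}$, with $d+1$ order-zero colours and $\varphi_1 : F_1\to p_nAp_n\subseteq I$ (possible since $p_nAp_n$ is hereditary in $I$). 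Put $F=F_1\oplus F_2$ with $i$-th colour $F_1^{(i)}\oplus F_2^{(i)}$, let $\Psi(a)=\psi_1(p_nap_n)\oplus\psi_2(\pi(a))$, and on the $i$-th colour let $\Phi(y\oplus z)=\varphi_1^{(i)}(y)+(1-p_n)\widetilde\varphi_2^{(i)}(z)(1-p_n)$. The first summand lies in $p_nAp_n$ and the second in $(1-p_n)A(1-p_n)$, which are orthogonal corners of $A$; this makes the cross terms in the order-zero check vanish, so the $I$-colours and the $A/I$-colours merge and no extra colour is spent --- this is exactly what turns the sum $\nucdim(I)+\nucdim(A/I)+1$ into the maximum. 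A short computation using $\varphi_1\psi_1(p_nap_n)\approx p_nap_n$, $\|(1-p_n)(w_a+x_a)(1-p_n)\|<2\epsilon$, and $\|a-p_nap_n-(1-p_n)a(1-p_n)\|$ of order $\epsilon$ (asymptotic centrality) gives $\Phi\circ\Psi(a)\approx a$. The compressed maps $(1-p_n)\widetilde\varphi_2^{(i)}(\cdot)(1-p_n)$ are only approximately order zero, which is repaired by the standard perturbation of an approximately-order-zero completely positive contraction on a finite-dimensional domain to a genuine order-zero map. The delicate point is the ordering of choices: the lift $\widetilde\varphi_2$ must be fixed before $n$ so that the residue $x_a$ is a fixed element of $I$ that the approximate unit annihilates in the $(1-p_n)$-corner.

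Statement (B) --- the argument we owe to James Gabe --- is where I expect the real work to lie. Since the quotient is AF, the graph $E\setminus EH$ has no cycles; I would use this to produce an increasing approximate identity $(q_n)$ of projections for $I_H$ built from the vertex projections $p_v$ ($v\in H$) together with the range projections $s_\mu s_\mu^*$ of first-passage paths into a suitably chosen finite exhaustion of $H$ (paths $\mu$ with $r(\mu)\in H$ and no earlier vertex in $H$), organized \emph{by endpoint} rather than by length, so that --- using row-finiteness and the absence of cycles in the quotient to keep the relevant sets of paths finite, and hence the joins inside $C^*(E)$ --- one has $q_n s_\mu = s_\mu Z_n^{r(\mu)}$ for every path $\mu$ with $r(\mu)\notin H$, where $Z_n^{r(\mu)}$ depends only on the \emph{range} $r(\mu)$. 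Then $q_n$ commutes exactly with each spanning element $s_\alpha s_\beta^*$ of $C^*(E)$ outside $I_H$ (its two factors share a range and so contribute the same $Z_n^{(\cdot)}$), while acting as a local unit on each element of $I_H$; hence $(q_n)$ is asymptotically central in $C^*(E)$ and the extension is quasidiagonal. The main obstacle is the combinatorial bookkeeping in this construction --- arranging the finiteness that lets the $q_n$ live in $C^*(E)$ and forces the off-$I$ commutators to vanish --- with acyclicity of the quotient graph entering as the structural input that makes it possible.
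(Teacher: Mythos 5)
Your proposal is correct and follows essentially the same route as the paper: your statement (A) is Proposition~\ref{p:nucleardim-quasi}, your statement (B) is Theorem~\ref{t:graphquasidiag-rf} (Gabe's argument), your desingularization step is Corollary~\ref{cor:ndC*(E)=nd(I)}, and the final assembly via Corollary~\ref{cor:old pi nucdim} and Theorem~\ref{thm:pi graph alg} is exactly the paper's proof. Even the internal details --- lifting the quotient approximation to order-zero maps, compressing by $1-p_n$, approximating $p_nAp_n$ inside $I$ so the two sets of colours merge, and the endpoint-indexed first-entry projections giving exact commutation with $s_\alpha s_\beta^*$ --- coincide with the paper's arguments.
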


We prove Theorem~\ref{thm:AF-ext} at the end of the section. Prior to the proof we need to establish
some preliminary results on quasidiagonal extensions.

\begin{dfn}\label{dfn:qde}
Let $A$ be a separable $C^*$-algebra and $I$ an ideal of $A$. We say that $0 \to I \to A
\to A/I \to 0$ is a \emph{quasidiagonal extension} if there is an approximate identity
$(p_n)^\infty_{n=1}$ of projections in $I$ such that $\|p_n a - ap_n\| \to 0$ for all $a
\in A$. We refer to the sequence $(p_n)$ as \emph{quasicentral} in $A$.
\end{dfn}

\begin{prp}\label{p:nucleardim-quasi}
Suppose that $A$ is a separable nuclear $C^*$-algebra and $0 \to I \to A \to A/I \to 0$
is a quasidiagonal extension. Then $\nucdim(A) = \max\{\nucdim(I), \nucdim(A/I)\}$.
\end{prp}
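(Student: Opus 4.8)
The plan is to prove the two inequalities $\nucdim(A) \ge \max\{\nucdim(I), \nucdim(A/I)\}$ and $\nucdim(A) \le \max\{\nucdim(I), \nucdim(A/I)\}$ separately. The first is immediate from \cite{WinterZacharias:AM10}: nuclear dimension does not increase when passing to ideals \cite[Proposition~2.9]{WinterZacharias:AM10} or to quotients \cite[Proposition~2.9]{WinterZacharias:AM10}, so both $\nucdim(I) \le \nucdim(A)$ and $\nucdim(A/I) \le \nucdim(A)$ hold with no use of quasidiagonality. Thus the content is the reverse inequality, where we may assume $n := \max\{\nucdim(I), \nucdim(A/I)\}$ is finite.

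For the upper bound, fix a finite set $\mathcal{F} \subseteq A$ and $\varepsilon > 0$; we must produce a finite-dimensional $F = F^{(0)} \oplus \dots \oplus F^{(n)}$, a completely positive contraction $\psi : A \to F$, and a completely positive map $\varphi : F \to A$ restricting to an order-zero contraction on each $F^{(i)}$ with $\|\varphi\circ\psi(a) - a\| < \varepsilon$ for $a \in \mathcal{F}$. First I would use the quasicentral approximate identity $(p_k)$ of projections in $I$: choose $k$ large enough that $\|p_k a - a p_k\|$, $\|p_k a p_k + (1-p_k) a (1-p_k) - a\|$, and similar correction terms are all small for $a \in \mathcal{F}$ (here one uses that for $a$ self-adjoint, $\|a - p a p - (1-p)a(1-p)\| = \|pa(1-p) + (1-p)ap\|\to 0$, and polarize). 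The projection $p := p_k$ gives a direct-sum-type decomposition: $pAp$ is a hereditary subalgebra of $I$ with $\nucdim(pAp) \le \nucdim(I) \le n$ (\cite[Proposition~2.5]{WinterZacharias:AM10}), while $(1-p)A(1-p)$ — which is a hereditary subalgebra of $A$ — surjects onto $A/I$ via the quotient map, and in fact the quotient map restricts to an isomorphism $(1-p)A(1-p) \to \widetilde{q}((1-p)A(1-p))$... more carefully, since $p \in I$, the image of $1-p$ in $A/I$ is the unit of the multiplier algebra, so $q((1-p)a(1-p)) = q(a)$; and the kernel of $q|_{(1-p)A(1-p)}$ is $(1-p)I(1-p)$, which for $p$ chosen far enough out in the approximate identity can be made to act negligibly on $\mathcal{F}$. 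The cleanest route is: approximate each $a \in \mathcal{F}$ within $\varepsilon/4$ by $pap + (1-p)a(1-p)$; separately nuclear-dimension-approximate the finite set $\{pap : a \in \mathcal{F}\} \subseteq pAp$ (nuclear dimension $\le n$) and the finite set $\{q(a) : a \in \mathcal{F}\} \subseteq A/I$ (nuclear dimension $\le n$); then use that $A$ is nuclear to lift the approximating data for $A/I$ through a completely positive contractive splitting $\sigma : A/I \to A$ (Choi--Effros), and compress the lifted $\varphi$-maps by $1-p$ so their images land in $(1-p)A(1-p)$ and are nearly orthogonal to the $pAp$-part. Adding the two approximations, with $F$ the direct sum of the two finite-dimensional algebras (summand by summand, $F^{(i)} = F_I^{(i)} \oplus F_{A/I}^{(i)}$, so the order-zero count stays at $n$), and $\psi = (\psi_I \circ (\cdot\,p\,\cdot)) \oplus (\psi_{A/I} \circ q)$, $\varphi = \varphi_I \oplus (\text{compressed lift of }\varphi_{A/I})$, yields the desired factorization.

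The main obstacle is controlling the cross-terms and verifying that the combined $\varphi$ is genuinely order-zero on each summand $F^{(i)}$: the $pAp$-piece has range in $pAp \subseteq I$ and the $A/I$-piece has range (after compression by $1-p$) in $(1-p)A(1-p)$, but $(1-p)A(1-p)$ still meets $I$, so these two ranges are only \emph{approximately} orthogonal, not exactly. The standard fix is to not insist on exact orthogonality of the two blocks but instead to observe that order-zero-ness is needed only within each $F^{(i)} = F_I^{(i)} \oplus F_{A/I}^{(i)}$, where $\varphi|_{F_I^{(i)}}$ and $\varphi|_{F_{A/I}^{(i)}}$ are each honestly order-zero by construction; orthogonal elements of the direct sum $F_I^{(i)} \oplus F_{A/I}^{(i)}$ have orthogonal components in each summand, so $\varphi$ of them have products like $\varphi_I(x)\cdot(\text{lift})(y)$, and one needs $pAp \perp (1-p)A(1-p)$ — which \emph{is} exact since $p$ is a projection. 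So in fact choosing $p$ a genuine projection (guaranteed by the quasidiagonality hypothesis) makes the orthogonality exact and the obstruction evaporates; the only remaining care is the standard $\varepsilon/3$-bookkeeping to see that $\|\varphi\circ\psi(a) - a\|$ is small, using $\|p a p + (1-p) a (1-p) - a\| \to 0$ and the approximation properties of $\psi_I, \varphi_I, \psi_{A/I}, \varphi_{A/I}$ together with contractivity of the Choi--Effros lift.
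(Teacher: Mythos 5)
Your overall architecture is the same as the paper's: use the quasicentral approximate identity of projections to split $a \approx pap + (1-p)a(1-p)$, approximate the first half inside $pAp$ (hereditary in $I$, so of nuclear dimension at most $n$) and the second half via an approximation of $q(a)$ in $A/I$ pushed back into $(1-p)A(1-p)$, and combine the two finite-dimensional algebras summand by summand so that the order-zero count stays at $n$, with the exact orthogonality of $pAp$ and $(1-p)A(1-p)$ killing the cross-terms. The genuine gap is in the quotient half. You produce the maps into $(1-p)A(1-p)$ by composing the order-zero contractions $\varphi_{A/I}|_{F^{(i)}_{A/I}}$ with a Choi--Effros splitting $\sigma : A/I \to A$ and then compressing by $1-p$, and you assert these are ``honestly order-zero by construction.'' They are not: $\sigma$ is merely a completely positive contractive splitting and does not preserve orthogonality, so for $x \perp y$ in $F^{(i)}_{A/I}$ the product $\sigma(\varphi_{A/I}(x))\,\sigma(\varphi_{A/I}(y))$ is an element of $I$ that need not vanish. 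Compression by $1-p$ makes this product \emph{small} when $p$ is far out and quasicentral (e.g.\ $\|(1-p)up\| = \|(1-p)(up-pu)\| \le \|up-pu\|$), but not zero, and the definition of nuclear dimension requires the components of $\varphi$ to be exactly order zero. As written, this step fails.

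The repair is precisely what the paper does, following the proof of \cite[Proposition~2.9]{WinterZacharias:AM10}: first lift $\rho$ to a completely positive $\overline\rho : F \to A$ that is genuinely order zero on each $F^{(i)}$ (order-zero contractions out of finite-dimensional $C^*$-algebras are liftable), then choose $p$ so that $1-p$ nearly commutes with the compact image of $\overline\rho$, making $x \mapsto (1-p)\overline\rho(x)(1-p)$ only approximately order zero, and finally invoke the stability of order-zero maps (\cite[Proposition~1.4]{WinterZacharias:AM10}) to replace it by an exactly order-zero contraction $\hat\rho^{(i)} : F^{(i)} \to (1-p)A(1-p)$ within the allowed tolerance. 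With that correction inserted, the rest of your bookkeeping goes through; the exact orthogonality of the two blocks and the $\varepsilon$-estimates are fine, as is the lower bound, though the latter should cite the permanence properties for quotients and hereditary subalgebras rather than \cite[Proposition~2.9]{WinterZacharias:AM10}, which is the extension bound $\nucdim(A) \le \nucdim(I)+\nucdim(A/I)+1$.
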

\begin{proof}
We modify the argument of \cite[Proposition~6.1]{KirchbergWinter} using the first part of
the argument of \cite[Proposition~2.9]{WinterZacharias:AM10}.

Suppose that both $\nucdim(A/I)$ and $\nucdim(I)$ are at most $d$. We must show that
$\nucdim(A) \le d$. Fix normalized positive elements $e_1, \dots, e_L$ of $A$, and fix
$\varepsilon > 0$.

Choose a finite-dimensional $F = \bigoplus^d_{i=0} F^{(i)}$, a completely positive
contraction $\eta : A/I \to F$ and a completely positive $\rho : F \to A/I$ which is an
order-0 contraction on each $F^{(i)}$ so that $\|\rho(\eta(q_I(e_l))) - q_I(e_l)\| <
\varepsilon/5$ for each $l$. Let $P = \{p_\alpha\}$ be an approximate identity of
projections in $I$ which is quasicentral in $A$. We follow the proof of
\cite[Proposition~2.9]{WinterZacharias:AM10} as far as the middle of the top of page~472
first to lift $\rho$ to a completely positive $\overline{\rho} : F \to A$ which is an
order-0 contraction on each $F^{(i)}$, and then to find $p \in P$ such that, with
$\delta$ as in \cite[Proposition~2.6]{KirchbergWinter},
\begin{enumerate}
\item\label{it:almost commutes} $\|[(1-p), \overline{\rho}(x)]\| < \delta\|x\|$ for
    all $x \in F$,
\item each $\|(p e_l p + (1-p)e_l(1-p)) - e_l\| < \varepsilon/5$, and
\item each $\big\|(1-p)\big(\overline{\rho}(\eta(e_l)) - e_l\big)(1-p)\big\| <
    2\varepsilon/5$.
\end{enumerate}
Now using Proposition~1.4 of \cite{WinterZacharias:AM10} and property~(\ref{it:almost
commutes}) of $\overline{\rho}$ above, we obtain completely positive order-0 contractions
$\hat\rho^{(i)} : F^{(i)} \to (1-p)A(1-p)$ such that, putting $\hat\rho =
\sum_i\hat\rho^{(i)}$, we have
\[
    \|\hat\rho(x) - (1-p)\overline{\rho}(x)(1-p)\| < \varepsilon\|x\|/5
        \quad\text{ for all $x \in F$.}
\]

Now choose a finite-dimensional $G = \bigoplus^d_{i=0} G^{(i)}$, a completely positive
contraction $\psi : I \to G$ and a completely positive $\varphi : G \to I$ which is an
order-0 contraction on each $G^{(i)}$ such that $\|\varphi(\psi(pe_l p)) - p e_l p\| <
\varepsilon/5$ for each $l$. Define $H = \bigoplus^d_{i=1} (F^{(i)} \oplus G^{(i)})$, and
define $\Phi : A \to F \oplus G \cong H$ by $a \mapsto \big(\eta(q_I(a)),
\psi(pap)\big)$. For each $i$, define $\Phi^{(i)} : F^{(i)} \oplus G^{(i)} \to A$ by
$\Phi^{(i)}(x,y) = \hat\rho(x) + \varphi(y)$. Since each $\hat\rho(x) \in (1-p)A(1-p)$
and each $\varphi(y) \in pAp$, the $\Phi^{(i)}$ are order-zero contractions. The final
calculation of the proof of \cite[Proposition~2.9]{WinterZacharias:AM10} applies verbatim
to show that $\|\Phi(\Psi(e_l)) - e_l\| < \varepsilon$ for all $l$.
\end{proof}

To prove Theorem~\ref{thm:AF-ext}, we show next that if $E$ is row-finite and $I$ is a
gauge-invariant ideal of $C^*(E)$ such that $C^*(E)/I$ is AF, then $0 \to I \to C^*(E)
\to C^*(E)/I \to 0$ is quasidiagonal.

We thank James Gabe for providing us with the following result.

\begin{thm}[Gabe]\label{t:graphquasidiag-rf}
Let $E$ be a row-finite graph, $H$ a hereditary and saturated set such that the quotient
graph $E / H = ( E^{0} \setminus H , r^{-1} ( E^{0} \setminus H ) , r, s )$ has no cycles.
Then $0 \to I_{H}\to C^{*}(E) \to C^{*}(E)/I_{H} \to 0 $ is a quasidiagonal extension.
\end{thm}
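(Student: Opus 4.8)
The plan is to produce an explicit approximate identity of projections in $I_H$ and show it is quasicentral in $C^*(E)$. The natural candidate is built from the vertex projections in $H$: for a finite subset $V \subseteq H$, set $p_V := \bigvee_{v \in V} p_v = \sum_{v \in V} p_v$ (these are mutually orthogonal projections since distinct vertex projections are orthogonal). Ordering finite subsets of $H$ by inclusion gives an increasing net $(p_V)$ of projections in $I_H$; since $I_H = \overline{\operatorname{span}}\{s_\mu s_\nu^* : s(\mu) = s(\nu) \in H\}$ and each such $s_\mu s_\nu^*$ satisfies $p_V s_\mu s_\nu^* = s_\mu s_\nu^* = s_\mu s_\nu^* p_V$ once $s(\mu) = s(\nu) \in V$, this net is an approximate identity for $I_H$. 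Replacing the net by a sequence is harmless because $E$ is countable (so $H$ is countable), and for quasidiagonality in the sense of Definition~\ref{dfn:qde} we only need a sequence $(p_n)$ with $\|p_n a - a p_n\| \to 0$ for $a$ ranging over a countable dense subset; an exhaustion $H = \bigcup_n V_n$ does the job provided we verify the commutator estimate.

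The heart of the matter is the commutator estimate: we must show that for each generator $a \in \{p_w : w \in E^0\} \cup \{s_e : e \in E^1\}$ (hence, by an $\varepsilon/3$-argument, for all $a \in C^*(E)$), we have $\|p_{V_n} a - a p_{V_n}\| \to 0$. For $a = p_w$ this is immediate since all vertex projections commute. The real content is $a = s_e$. Here $p_V s_e = \sum_{v \in V} p_v s_e = [s(e) \in V]\, s_e$, while $s_e p_V = s_e p_{r(e)} [r(e) \in V] = [r(e) \in V]\, s_e$. So $p_V s_e - s_e p_V = \big([s(e)\in V] - [r(e)\in V]\big) s_e$, which has norm $1$ precisely when exactly one of $s(e), r(e)$ lies in $V$, and is $0$ otherwise. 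Since $H$ is hereditary, $r(e) \in H$ whenever $s(e) \in H$; so the only problematic edges are those with $r(e) \in H$ but $s(e) \notin H$ — i.e. edges entering $H$ from outside. The commutator fails to vanish along the exhaustion exactly at such edges (and only finitely often for a fixed edge, but there are infinitely many such edges in general), so the naive net does \emph{not} give quasicentrality. This is the main obstacle, and it is precisely where the hypothesis that $E/H$ has no cycles must enter.

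To overcome this, the plan is to enlarge each projection $p_{V}$ by adding, for vertices $w \notin H$ that are "close" to $H$, a correction term that is a finite sum of range projections $s_\mu s_\mu^*$ over paths $\mu$ starting at $w$ and travelling into $H$ — in effect, pushing the boundary of the projection finitely many steps back into the complement of $H$. Because $E/H$ has no cycles, every path in $E$ eventually either stays in $E^0 \setminus H$ for only boundedly many steps from any given finite set of starting vertices, or enters $H$; more precisely, for a finite set $W \subseteq E^0 \setminus H$ there is $N$ so that every path of length $N$ starting in $W$ has either left the "relevant region" or entered $H$ — this is where acyclicity of the quotient and row-finiteness combine to give a finiteness statement. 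One then defines $q_n$ to be $p_{V_n}$ plus $\sum_{\mu} s_\mu s_\mu^*$ over an appropriate finite set of such paths $\mu$, checks $q_n$ is still a projection in $I_H$ (each added $s_\mu s_\mu^*$ lies in $I_H$ since $\mu$ ends in $H$ and $H$ is hereditary and saturated), checks $(q_n)$ is still increasing with the same supremum, hence still an approximate identity for $I_H$, and finally checks that the commutator $\|q_n s_e - s_e q_n\|$ now tends to $0$: the correction exactly cancels the boundary discrepancy along each edge entering $H$, with only a vanishing tail of exceptions as $n \to \infty$. The bookkeeping for which paths to include — ensuring the $q_n$ stay projections (the added range projections must be mutually orthogonal and orthogonal to the relevant $p_v$), stay increasing, and kill the commutators — is the one genuinely delicate computation, and I expect it to occupy most of the written proof; everything else is routine once the no-cycles condition is deployed to bound path lengths.
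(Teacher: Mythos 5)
Your plan is essentially the paper's proof: the approximate identity used there is exactly $e_X = \sum_{v\in X\cap H} p_v + \sum_{\alpha} s_\alpha s_\alpha^*$, with $\alpha$ running over paths whose sources all lie in $X\setminus H$ and which enter $H$ precisely at their last edge, and row-finiteness together with acyclicity of $E/H$ is invoked exactly as you predict to make these index sets finite, after which the commutator with a spanning element $s_\beta s_\gamma^*$ is checked to vanish for large $X$. (One small slip in your motivation: for a fixed edge $e$ entering $H$, the naive commutator $[p_{V_n}, s_e]$ has norm $1$ for all sufficiently large $n$, not just finitely often --- but this only reinforces your correct conclusion that the correction terms are needed.)
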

\begin{proof}
Let $\{ p_{v} , s_{e} : v \in E^{0} , e \in E^{1} \}$ be the universal generating
Cuntz-Krieger $E$-family in $C^{*}(E)$.  Define
\begin{align*}
F_{1}(H) = H \cup \{\alpha \in E^{*} : \text{$\alpha = e_{1} \cdots e_{n}$
					 with $r( e_{n} ) \in H$, $s( e_{n} ) \notin H$}\}.
\end{align*}
Recall that if $F$ is a directed graph, then any increasing sequence of finite subsets
$V_n$ of $F^0$ such that $\bigcup V_n = F^0$ yields an approximate identity $\big(\sum_{v
\in V_n} p_v\big)_n$ of projections in $C^*(F)$.

We construct an approximate identity $\{e_X : X\text{ is a finite subset of }E^0\}$ for
$I_H$ that is quasicentral in $C^{*}(E)$ as follows.  By
\cite[Theorem~5.1]{RuizTomforde:xx12}, the elements $\{Q_\alpha : \alpha \in F_1(H)\}$
given by
\begin{align*}
Q_{\alpha} =
\begin{cases}
p_{v} &\text{if $v \in H$} \\
s_{\alpha} s_{\alpha}^{*} &\text{if $\alpha \in F_{1} (H)$}
\end{cases}
\end{align*}
are mutually orthogonal projections in $I_{H}$ and any sequence of increasing sums of these
which eventually exhausts all $\alpha \in F_{1} (H)$ is an approximate identity of projections
for $I_{H}$.

For a finite $X \subseteq E^0$, let
\begin{align*}
F_{1}(H)_X =
    (X \cap H) \cup \{\alpha \in F_{1}(H) : \alpha = e_{1} \cdots e_{k}
        \text{ and }s(e_1), \dots, s(e_k) \in X \setminus H\}.
\end{align*}
Since $E$ is row-finite and $E/H$ has no cycles, $F_1(H)_X$ is finite.  We have $\bigcup_X
F_{1}(H)_X = F_{1}(H)$. For $X \subseteq E^0$, set
\begin{align*}
e_X = \sum_{\alpha \in F_1(H)_X} Q_\alpha.
\end{align*}
Since $\bigcup_X F_1(H)_X = F_1(H)$, \cite[Theorem~5.1]{RuizTomforde:xx12} implies that
the $e_X$ (ordered by set-inclusion on finite subsets $X$ of $E^0$) constitute an
approximate identity for $I$ consisting of projections. We show that it is quasicentral.

Let $\beta, \gamma \in E^{*}$ with $r( \beta )  = r( \gamma ) = w$.  If $w \in H$, then
$s_{ \beta } s_{\gamma}^{*} \in I_{H}$, and hence
\begin{align*}
\lim_X \| e_X s_{ \beta } s_{\gamma}^{*} - s_{ \beta } s_{\gamma}^{*} e_X \| = 0.
\end{align*}
Suppose $w \notin H$.  Note that $p_{v} s_{\beta} s_{ \gamma}^{*} = s_{\beta}
s_{\gamma}^{*} p_{v} = 0$ for $v \in H$.  Let $\alpha \in F_{1} (H)$.  Since $r(\alpha)
\in H$ and $r( \beta ) = r( \gamma ) = w \notin H$, we have
\begin{align*}
s_{\alpha} s_{\alpha}^{*} s_{\beta} s_{\gamma}^{*} =
\begin{cases}
s_{\beta\alpha'} s_{\gamma\alpha'}^{*} &\text{if $\alpha = \beta \alpha'$} \\
0			&\text{otherwise}
\end{cases}
\quad \text{and} \quad
s_{\beta} s_{ \gamma }^{*} s_{\alpha} s_{\alpha}^{*} =
\begin{cases}
s_{\beta \alpha'} s_{\gamma \alpha'}^{*} &\text{if $\alpha = \gamma \alpha'$} \\
0 	&\text{otherwise}
\end{cases}
\end{align*}

Choose a finite $X_0 \subseteq E^0$ such that $r(\beta_i), s(\beta_i), r(\gamma_i)$ and
$s(\gamma_i)$ belong to $X$ for all $i$. For any $X$ containing $X_0$, we have $\beta
\alpha' , \gamma \alpha' \in F_{1}(H)_X$ for each $\alpha'\in F_{1}(H)_X$ with
$r(\alpha') = w$. Hence,
\begin{align*}
e_X s_{\beta} s_{ \gamma }^{*}
	= \sum_{ \alpha' \in F_{1} (H) , r( \alpha' ) = w }
		s_{\beta \alpha'} s_{ \gamma \alpha'}^{*} = s_{ \beta} s_{ \gamma }^{*} e_X.
\end{align*}
The claim now follows since $C^{*} (E) = \clsp\{ s_{\beta} s_{\gamma}^{*} : \beta, \gamma
\in E^{*} , r(\beta) = r(\gamma)\}$.
\end{proof}

\begin{cor}\label{cor:ndC*(E)=nd(I)}
Let $E$ be a graph and let $I$ be a gauge-invariant ideal of $C^{*}(E)$ such that
$C^{*}(E)/I$ is an AF-algebra.  Then $\nucdim(C^*(E)) = \nucdim(I)$.
\end{cor}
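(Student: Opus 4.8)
The plan is to reduce to Gabe's Theorem~\ref{t:graphquasidiag-rf}, which handles row-finite graphs, together with Proposition~\ref{p:nucleardim-quasi}. Since an AF algebra has nuclear dimension~$0$ by \cite[Remark~2.2]{WinterZacharias:AM10}, once we know that $0 \to I \to C^*(E) \to C^*(E)/I \to 0$ is a quasidiagonal extension, Proposition~\ref{p:nucleardim-quasi} immediately gives $\nucdim(C^*(E)) = \max\{\nucdim(I), \nucdim(C^*(E)/I)\} = \max\{\nucdim(I), 0\} = \nucdim(I)$. When $E$ is row-finite this needs no extra work: $C^*(E)$ is separable and nuclear, the gauge-invariant ideal $I$ has the form $I_H$ for some hereditary saturated $H \subseteq E^0$, and $C^*(E)/I_H \cong C^*(E/H)$ is AF if and only if $E/H$ has no cycles (\cite{KPRR}; see also \cite[Theorem~4.1]{BatesPaskEtAl:NYJM00}) --- which is exactly the hypothesis of Theorem~\ref{t:graphquasidiag-rf}. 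So the task is to remove the hypothesis that $E$ is row-finite.

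To do this I would pass to a Drinen--Tomforde desingularization, as in the proof of Theorem~\ref{thm:pi graph alg}. Let $F$ be a desingularization of $E$ \cite{DrinenTomforde:RMJM05}; then $F$ is row-finite (with no sinks) and $C^*(E)$ is isomorphic to a full corner $pC^*(F)p$, where $p$ is a full projection in $\Mm(C^*(F))$. Let $J \lhd C^*(F)$ be the ideal generated by (the image of) $I$. A routine corner computation shows that $I = pJp$, that $I$ is a full hereditary subalgebra of $J$, and that $C^*(E)/I$ is isomorphic to the full corner $\bar p(C^*(F)/J)\bar p$ of $C^*(F)/J$. Hence $I$ is Morita equivalent to $J$, and $C^*(E)/I$ is Morita equivalent to $C^*(F)/J$; in particular $C^*(F)/J$ is stably isomorphic to the AF algebra $C^*(E)/I$ and is therefore itself AF, being isomorphic to a hereditary subalgebra of the AF algebra $(C^*(E)/I) \otimes \Kk$. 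Since full corners of separable $C^*$-algebras are stably isomorphic, \cite[Corollary~2.8(i)]{WinterZacharias:AM10} also gives $\nucdim(C^*(E)) = \nucdim(C^*(F))$ and $\nucdim(I) = \nucdim(J)$. Finally, desingularization respects gauge-invariant ideals: the generators of $I$ --- vertex projections and, at breaking vertices of $E$, the associated gap projections --- map under the isomorphism $C^*(E) \cong pC^*(F)p$ to finite sums of projections, each Murray--von Neumann equivalent to a vertex projection of $F$ lying on a desingularizing tail (a telescoping computation with the Cuntz--Krieger relations along the tails; see also \cite{DrinenTomforde:RMJM05}). Consequently $J = I_{H'}$ for some hereditary saturated $H' \subseteq F^0$, and $C^*(F)/J \cong C^*(F/H')$ is AF, so $F/H'$ has no cycles.

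At this point Gabe's Theorem~\ref{t:graphquasidiag-rf}, applied to the pair $(F, H')$, shows that $0 \to J \to C^*(F) \to C^*(F)/J \to 0$ is a quasidiagonal extension. As $C^*(F)$ is separable and nuclear and $C^*(F)/J$ is AF, Proposition~\ref{p:nucleardim-quasi} and \cite[Remark~2.2]{WinterZacharias:AM10} give $\nucdim(C^*(F)) = \max\{\nucdim(J), \nucdim(C^*(F)/J)\} = \nucdim(J)$. Stringing together the equalities from the previous paragraph, $\nucdim(C^*(E)) = \nucdim(C^*(F)) = \nucdim(J) = \nucdim(I)$, which is Corollary~\ref{cor:ndC*(E)=nd(I)}.

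The only step that is not a purely formal assembly of cited facts is the verification, in the second paragraph, that the Drinen--Tomforde desingularization behaves well with respect to the gauge-invariant ideal structure --- concretely, that the ideal $J$ it produces is again of the form $I_{H'}$ with $F/H'$ having no cycles, so that Gabe's theorem is applicable. I expect this bookkeeping (tracking the generators of $I$, including the gap projections at breaking vertices, through the desingularization) to be the main obstacle; everything else follows from Proposition~\ref{p:nucleardim-quasi}, Theorem~\ref{t:graphquasidiag-rf}, the stability and hence Morita invariance of nuclear dimension, and the vanishing of the nuclear dimension of AF algebras.
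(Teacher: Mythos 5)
Your argument is correct and follows essentially the same route as the paper: pass to a Drinen--Tomforde desingularization $F$, transfer $I$ to the corresponding gauge-invariant ideal $J$ of $C^*(F)$, apply Gabe's Theorem~\ref{t:graphquasidiag-rf} and Proposition~\ref{p:nucleardim-quasi}, and conclude using that AF algebras have nuclear dimension~$0$ and that nuclear dimension is invariant under stable isomorphism. The only difference is cosmetic: where the paper simply cites \cite{DrinenTomforde:RMJM05} for the fact that Rieffel induction carries gauge-invariant ideals to gauge-invariant ideals, you spell out the bookkeeping showing $J = I_{H'}$ with $F/H'$ acyclic, which is a correct (and slightly more explicit) account of the same step.
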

\begin{proof}
Let $F$ be a Drinen-Tomforde desingularization of $E$ \cite{DrinenTomforde:RMJM05}, so
that $C^*(F)$ is stably isomorphic to $C^*(E)$ and $F$ is row-finite with no sinks.
Rieffel induction over the Morita
equivalence coming from a Drinen-Tomforde desingularization carries gauge-invariant
ideals to gauge-invariant ideals \cite{DrinenTomforde:RMJM05}. So the ideal $J$ of
$C^*(F)$ corresponding to $I$ is gauge invariant and is stably isomorphic to $I$. The
quotient $C^*(F)/J$ is stably isomorphic to the AF algebra $C^*(E)/I$ and therefore
itself AF. Corollary~2.8(i) of \cite{WinterZacharias:AM10} implies that $\nucdim(I) =
\nucdim(I \otimes \Kk) = \nucdim(J \otimes \Kk) = \nucdim(J)$.
Theorem~\ref{t:graphquasidiag-rf} implies that $0 \to J \to C^*(F) \to C^*(F)/J \to 0$ is
quasidiagonal, and so Proposition~\ref{p:nucleardim-quasi} implies that $\nucdim(C^*(F))
= \max\{\nucdim(C^*(F)/J), \nucdim(J)\}$. Since $C^*(F)/J$ is AF, Remark~2.2 of
\cite{WinterZacharias:AM10} (see also Example~4.1 of \cite{KirchbergWinter}) implies that
$\nucdim(C^*(F)/J) = 0$, and we deduce that $\nucdim(C^*(F)) = \nucdim(J)$. To finish
off, we observe as above that since $C^*(F)$ and $C^*(E)$ are stably isomorphic, they
have the same nuclear dimension.
\end{proof}

\begin{proof}[Proof of Theorem~\ref{thm:AF-ext}]
The result follows directly from Corollary~\ref{cor:ndC*(E)=nd(I)} combined with
Corollary~\ref{cor:old pi nucdim} and Theorem~\ref{thm:pi graph alg}.
\end{proof}


\begin{thebibliography}{00}
\bibitem{ArklintBentmannKatsura:xx13inv} S. Arklint, R. Bentmann and T. Katsura,
    \emph{Reduction of filtered {$K$}-theory and a characterization of {C}untz-{K}rieger
    algebras}, preprint (arXiv:1301.7223 [math.OA]).
\bibitem{ArklintBentmannKatsura:xx13range} S. Arklint, R. Bentmann and T. Katsura,
    \emph{The {$K$}-theoretical range of {C}untz-{K}rieger algebras}, preprint
    (arXiv:1309.7162 [math.OA]).
\bibitem{BEMSW} S. Barlak, D. Enders, H. Matui, G. Szabo and W. Winter, \emph{The Rokhlin
    property vs. Rokhlin dimension~1 on unital Kirchberg algebras}, preprint
    (arXiv:1312.6289 [math.OA]).
\bibitem{BatesPaskEtAl:NYJM00} T. Bates, D. Pask, I. Raeburn, and W. Szyma{\'n}ski,
    \emph{The {$C\sp *$}-algebras of row-finite graphs}, New York J. Math. \textbf{6}
    (2000), 307--324.
\bibitem{BentmannKohler:xx11} R. Bentmann and M. K\"{o}hler,  \emph{Universal coefficient
    theorems for {$C^{*}$}-algebras over finite topological spaces}, preprint
    (arXiv:1101.5702 [math.OA]).
\bibitem{BrownPedersen:JFA91} L.G. Brown and G.K. Pedersen, \emph{{$C^*$}-algebras of
    real rank zero}, J. Funct. Anal. \textbf{99} (1991), 131--149.
\bibitem{ChoiEffros:Ann76} M-D. Choi and E. Effros, \emph{The completely positive lifting
    problem for {$C\sp*$}-algebras}, Ann. of Math. (2) \textbf{104} (1976), 585--609.
\bibitem{DrinenTomforde:RMJM05} D. Drinen  and M. Tomforde, \emph{The {$C\sp *$}-algebras
    of arbitrary graphs}, Rocky Mountain J. Math. \textbf{35} (2005), 105--135.
\bibitem{Elliott:JA76} G.A. Elliott, \emph{On the classification of inductive limits of
    sequences of semisimple finite-dimensional algebras}, J. Algebra \textbf{38} (1976),
    29--44.
\bibitem{Elliott:JRAM93} G.A. Elliott, \emph{On the classification of {$C^*$}-algebras of
    real rank zero}, J. Reine Angew. Math. \textbf{443} (1993), 179--219.
\bibitem{ElliottGongEtAl:DMJ96} G.A. Elliott, G. Gong, H. Lin, and C. Pasnicu,
    \emph{Abelian {$C\sp *$}-subalgebras of {$C\sp *$}-algebras of real rank zero and
    inductive limit {$C\sp *$}-algebras}, Duke Math. J. \textbf{85} (1996), 511--554.
\bibitem{Enders:xx} D. Enders, \emph{On the nuclear dimension of certain
    {UCT}-{K}irchberg algebras}, preprint (arXiv:1405.6538 [math.OA]).
\bibitem{FowlerRaeburn:IUMJ99} N.J. Fowler  and I. Raeburn, \emph{The {T}oeplitz algebra
    of a {H}ilbert bimodule}, Indiana Univ. Math. J. \textbf{48} (1999), 155--181.
\bibitem{HongSzymanski:BLMS03} J.H. Hong and W. Szyma{\'n}ski, \emph{Purely infinite
    {C}untz-{K}rieger algebras of directed graphs}, Bull. London Math. Soc. \textbf{35}
    (2003), 689--696.
\bibitem{HuefRaeburn:ETDS97} A. an Huef  and I. Raeburn, \emph{The ideal structure of
    {C}untz-{K}rieger algebras}, Ergodic Theory Dynam. Systems \textbf{17} (1997),
    611--624.
\bibitem{JeongPark:JFA02} J.A. Jeong  and G.H. Park, \emph{Graph {$C\sp *$}-algebras
    with real rank zero}, J. Funct. Anal. \textbf{188} (2002), 216--226.
\bibitem{Kirchberg} E. Kirchberg, \emph{Das nicht-kommutative
    {M}ichael-{A}uswahlprinzip und die {K}lassifikation nicht-einfacher {A}lgebren},
    {$C^*$}-algebras ({M}\"unster, 1999), 92--141, Springer, Berlin, 2000.
\bibitem{KirchbergPhillips:Crelles00} E. Kirchberg  and N.C. Phillips, \emph{Embedding of
    exact {$C^{*}$}-algebras in the {C}untz algebra {$\mathcal{O}_2$}}, J. reine angew.
    Math. \textbf{525} (2000), 17--53.
\bibitem{KirchbergWinter} E. Kirchberg and W. Winter, \emph{Covering dimension and
    quasidiagonality}, Internat. J. Math. \textbf{15} (2004), 63--85.
\bibitem{Kribs} D.W. Kribs, \emph{Inductive limit algebras from periodic weighted
    shifts on {F}ock space}, New York J. Math. \textbf{8} (2002), 145--159.
\bibitem{KribsSolel:JAMS07} D.W. Kribs  and B. Solel, \emph{A class of limit algebras
    associated with directed graphs}, J. Aust. Math. Soc. \textbf{82} (2007), 345--368.
\bibitem{KPRR} A. Kumjian, D. Pask, I. Raeburn, and J. Renault,
    \emph{Graphs, groupoids, and {C}untz-{K}rieger algebras}, J. Funct. Anal.
    \textbf{144} (1997), 505--541.
\bibitem{MatuiSato} H. Matui and Y. Sato, \emph{Decomposition rank of {UHF}-absorbing
    {$C^{*}$}-algebras}, Duke Math. J., to appear (arXiv:1303.4371 [math.OA]).
\bibitem{MeyerNest:MJM09} R. Meyer  and R. Nest, \emph{{$C^*$}-algebras over topological
    spaces: the bootstrap class}, M\"unster J. Math. \textbf{2} (2009), 215--252.
\bibitem{MeyerNest:CJM12} R. Meyer  and R. Nest,  {\em {$C^*$}-algebras over topological
    spaces: {F}iltrated {K}-theory},
	Canad. J. Math. \textbf{64} (2012), 368--408.
\bibitem{PaskRaeburn:PRIMS96} D. Pask  and I. Raeburn, \emph{On the {$K$}-theory of
    {C}untz-{K}rieger algebras}, Publ. Res. Inst. Math. Sci. \textbf{32} (1996),
    415--443.
\bibitem{Phillips:DM00} N.C. Phillips, \emph{A classification theorem for nuclear purely
    infinite simple {$C\sp *$}-algebras}, Doc. Math. \textbf{5} (2000), 49--114.
\bibitem{Rieffel:PLMS83} M.A. Rieffel, \emph{Dimension and stable rank in the
    {$K$}-theory of {$C^{\ast}$}-algebras}, Proc. London Math. Soc. (3) \textbf{46}
    (1983), 301--333.
\bibitem{RSS} E. Ruiz, A. Sims, and A.P.W. S{\o}rensen
    \emph{UCT-Kirchberg algebras have nuclear dimension~1}, preprint (arXiv:1406.2045
    [math.OA]).
\bibitem{RuizTomforde:xx12} E. Ruiz  and M. Tomforde, \emph{Ideals in Graph Algebras},
    Algebr. Represent. Theory \textbf{17} (2014), 849--861.
\bibitem{Tomforde:PAMS04} M. Tomforde, \emph{Stability of {$C\sp \ast$}-algebras
    associated to graphs}, Proc. Amer. Math. Soc. \textbf{132} (2004), 1787--1795.
\bibitem{TomsWinter:TAMS07} A. Toms and W. Winter, \emph{Strongly self-absorbing
    {$C^*$}-algebras}, Trans. Amer. Math. Soc. \textbf{359} (2007), 3999--4029.
\bibitem{Winter:JFA07} W. Winter, \emph{Simple {$C^*$}-algebras with locally finite
    decomposition rank}, J. Funct. Anal. \textbf{243} (2007), 394--425.
\bibitem{Winter:IM12} W. Winter, \emph{Nuclear dimension and {$\mathcal{Z}$}-stability of
    pure {$C^*$}-algebras}, Invent. Math. \textbf{187} (2012), 259--342.
\bibitem{WinterZacharias:AM10} W. Winter  and J. Zacharias, \emph{The nuclear dimension
    of {$C^\ast$}-algebras}, Adv. Math. \textbf{224} (2010), 461--498.
\end{thebibliography}
\end{document}